\numberwithin{equation}{section}
\newcommand{\rar}[1]{\stackrel{#1}{\longrightarrow}}
\newcommand{\isom}{\rar{\simeq}}
\newcommand{\into}{\hookrightarrow}
\newcommand{\onto}{\twoheadrightarrow}
\newcommand{\Ga}{\Gamma}
\newcommand{\la}{\lambda}
\newcommand{\bC}{{\mathbb C}}
\newcommand{\bG}{{\mathbb G}}
\newcommand{\bM}{{\mathbb M}}
\newcommand{\bV}{{\mathbb V}}
\newcommand{\bW}{{\mathbb W}}
\newcommand{\bZ}{{\mathbb Z}}
\newcommand{\cD}{{\mathcal D}}
\newcommand{\cP}{{\mathcal P}}
\newcommand{\cQ}{{\mathcal Q}}
\newcommand{\cV}{{\mathcal V}}
\newcommand{\sC}{{\mathscr C}}
\newcommand{\sD}{{\mathscr D}}
\newcommand{\sL}{{\mathscr L}}
\newcommand{\sO}{{\mathscr O}}
\newcommand{\fD}{{\mathfrak D}}
\newcommand{\fF}{{\mathfrak F}}
\newcommand{\fZ}{{\mathfrak Z}}
\newcommand{\fb}{{\mathfrak b}}
\newcommand{\fg}{{\mathfrak g}}
\newcommand{\fn}{{\mathfrak n}}
\newcommand{\fp}{{\mathfrak p}}
\newcommand{\fz}{{\mathfrak z}}
\newcommand{\fgh}{\hat{\fg}}
\newcommand{\on}{\operatorname}
\newcommand{\Ker}{\on{Ker}}
\newcommand{\End}{\on{End}}
\newcommand{\Hom}{\on{Hom}}
\newcommand{\Ext}{\on{Ext}}
\newcommand{\Aut}{\on{Aut}}
\newcommand{\Spec}{\on{Spec}}
\newcommand{\Ind}{\on{Ind}}
\newcommand{\Rep}{\on{Rep}}
\newcommand{\gr}{\on{gr}}
\newtheorem{thm}{Theorem}[section]
\newtheorem{cor}[thm]{Corollary}
\newtheorem{lem}[thm]{Lemma}
\newtheorem{prop}[thm]{Proposition}
\newtheorem{prop-const}[thm]{Proposition-Construction}
\newtheorem{fthm}[thm]{Finiteness Theorem}
\theoremstyle{remark}
\newtheorem{rem}[thm]{Remark}
\newtheorem{rems}[thm]{Remarks}
\newcommand{\Tor}{\on{Tor}}
\newcommand{\Fun}{\on{Fun}}
\newcommand{\Lie}{\on{Lie}}
\newcommand{\lto}{\rar{}}
\newcommand{\vph}{\varphi}
\newcommand{\RHom}{\on{RHom}}
\newcommand{\Res}{\on{Res}}
\newcommand{\Gr}{\on{Gr}}
\newcommand{\Op}{\on{Op}}
\newcommand{\modules}{\on{-mod}}
\newcommand{\Adjoint}[4]{\xymatrix@1{#2 \ar@<.4ex>[r]^-{#1} & #3 \ar@<.4ex>[l]^-{#4}}}
\newcommand{\ld}{\check}
\title{A geometric proof of the Feigin-Frenkel theorem}
\author{Sam Raskin}
\date{\today}
\begin{document}

\begin{abstract}
We reprove the theorem of Feigin and Frenkel relating the center of the critical level enveloping algebra of the Kac-Moody algebra for a semisimple
Lie algebra to opers (which are certain de Rham local systems with extra structure) for the Langlands dual group. Our proof incorporates
a construction of Beilinson and Drinfeld relating the Feigin-Frenkel isomorphism to (more classical) Langlands duality through
the geometric Satake theorem.
\end{abstract}

\maketitle

\setcounter{tocdepth}{1}
\tableofcontents

\section{Introduction}\label{s:introduction}

\subsection{} The goal for this article is to give a second proof of the Feigin-Frenkel theorem that two polynomial algebras on countably many generators are isomorphic (subject to certain compatibilities: see Theorem \ref{t:ff}).

\subsection{} Recall that attached to an invariant symmetric bilinear form $\kappa$ on a semisimple Lie algebra $\fg$, we have the 
``(affine) Kac-Moody algebra $\fgh_{\kappa}$ of level $\kappa$" constructed as follows.

Let $\fg((t)):=\fg{\otimes}\bC((t))$ denote the Lie algebra from $\fg$ obtained from extension of scalars to the field $K=\bC((t))$ of Laurent series. We consider $\fg((t))$ as a topological Lie algebra over $\bC$ (in the sense of
\cite{top-algs}). Then $\fgh_{\kappa}$ is the central extension of $\fg((t))$ by the trivial one-dimensional Lie algebra $\bC$ defined by the 2-cocycle $(f,g)\mapsto \Res(\kappa(f,dg))$.

We have the abelian category $\fgh_{\kappa}\modules$ of discrete modules over $\fgh_{\kappa}$ such that the element $1\in\bC\subset\fgh_{\kappa}$ acts by the identity operator.

\subsection{} Note that the tautological map $\fg((t))\into\fgh_{\kappa}$ of topological vector spaces realizes $\fg[[t]]:=\fg{\otimes}\bC[[t]]$ as a Lie subalgebra of $\fgh_{\kappa}$ (since
the cocycle vanishes on this algebra). Therefore, we can induce modules from $\fg[[t]]$ to obtain objects of $\fgh_{\kappa}$. We define the vacuum module $\bV_{\kappa}\in\fgh_{\kappa}\modules$ as the module obtained by
inducing the trivial representation from $\fg[[t]]$ to $\fgh_{\kappa}$.

\subsection{} In this article, we are concerned with the case of ``critical level," i.e. $\kappa=\kappa_{crit}$ the invariant bilinear form given as $-\frac{1}{2}$ times the Killing form for $\fg$. We let
$\fgh$ denote $\fgh_{\kappa_{crit}}$ and $\bV$ denote $\bV_{\kappa_{crit}}$ the critical level vacuum module.

The Feigin-Frenkel theorem \cite{feigin-frenkel} describes the algebra $\fz:=\End_{\fgh\modules}(\bV)$ in terms of ``opers" for the Langlands dual group, whose definition we will presently recall. 
But before proceeding, let us record a first ``miracle": the center $\fZ$ of the category $\fgh\modules$ surjects onto $\fz$, so in particular $\fz$ is commutative (see \cite{bd} 2.9).

\subsection{} Let $G$ be the simply-connected semisimple group with Lie algebra $\fg$, and let us fix once and for all a Borel $B$ of $G$. Let $\ld{G}$ be the
Langlands dual group (i.e. the adjoint group for the dual root system), which is equipped with the dual Borel $\ld{B}$. We denote
the associated Lie algebras by $\fb$,$\ld{\fg}$ and $\ld{\fb}$ respectively.

An \emph{oper on the formal disc $\cD:=\Spec(\bC[[t]])$} is a pair $(\cP_{\ld{B}},\nabla=\nabla_{\ld{G}})$ consisting
of a $\ld{B}$-bundle $\cP_{\ld{B}}$ on $\cD$ with a connection $\nabla$ on the associated
$\ld{G}$-bundle $\cP_{\ld{G}}$, and which satisfies the following non-degeneracy property.

There is an obstruction $c(\nabla)\in(\ld{\fg}/\ld{\fb})_{\cP_{\ld{B}}}\otimes_{\sO_{\cD}}\omega_{\cD}$ to 
the preservation of $\nabla$ under the reduction to $\ld{B}$ (e.g., trivialize the $\ld{B}$-bundle and take the equivalence
class of the corresponding connection form). The oper condition for $(\cP_{\ld{B}},\nabla)$ is that:
\begin{enumerate}

\item\label{i:grif} $c(\nabla)$ lies in the sub-bundle of 
$(\ld{\fg}/\ld{\fb})_{\cP_{\ld{B}}}\otimes_{\sO_{\cD}}\omega_{\cD}$ associated to the $\ld{\fb}$-submodule
of $\ld{\fg}/\ld{\fb}$ which is the span of the weight spaces $(\ld{\fg}/\ld{\fb})_{\cP_{\ld{B}}}^{\check{\alpha}}$
for the negative simple roots $\check{\alpha}$ of $\ld{\fg}$.

\item\label{i:nondeg} By (\ref{i:grif}), $c(\nabla)$ lies in the direct sum of the line bundles $(\ld{\fg}/\ld{\fb})_{\cP_{\ld{B}}}^{\check{\alpha}}\otimes_{\sO_{\cD}}\omega_{\cD}$ where
$\check{\alpha}$ ranges over negative simple roots of $\ld{\fg}$. We require that the corresponding sections
are everywhere non-vanishing, i.e., that  they trivialize the line bundles $(\ld{\fg}/\ld{\fb})_{\cP_{\ld{B}}}^{\check{\alpha}}\otimes_{\sO_{\cD}}\omega_{\cD}$.

\end{enumerate}

\begin{rems}

\begin{enumerate}

\item Condition \ref{i:grif} for $GL_n$ is Griffiths' transversality condition.

\item Condition \ref{i:nondeg} implies that the connection $\nabla$ is \emph{never} preserved by the reduction to the Borel.

\item This definition seems to have first appeared in the $GL_n$-case in \cite{commutative-subrings}, and for general $G$
in \cite{drinfeld-sokolov}.

\end{enumerate}

\end{rems}

The above definition naturally extends to a moduli problem,\footnote{For $A$ a $\bC$-algebra, one should give a $\ld{B}$-bundle
on $\Spec(A[[t]])$, and on the associated $\ld{G}$-bundle a connection along the $t$-coordinate satisfying conditions similar to the above.}
and there is an associated scheme (non-canonically isomorphic to an infinite-dimensional affine space) which we denote by
$\Op_{\ld{G}}(\cD)$.

\begin{rem}\label{r:bg}
Note that $\Op_{\ld{G}}(\cD)$ has a canonical $\ld{B}$-bundle whose fiber at an oper $(\cP_{\ld{B}},\nabla_{\ld{G}})$ is the fiber of the $\ld{B}$-bundle $\cP_{\ld{B}}$ at $0$.
In particular, we also have the induced $\ld{G}$-bundle on $\Op_{\ld{G}}(\cD)$.
\end{rem}

\subsection{} The Feigin-Frenkel theorem from \cite{feigin-frenkel} gives an isomorphism $\vph:\Spec(\fz)\isom \Op_{\ld{G}}(\cD)$.
Let us recall two compatibilities that this isomorphism satisfies.

\subsection{} Let $\Aut$ be the group scheme (of infinite type) of automorphisms of $\cD$ which preserve the closed point, and
let $\Aut^+$ be the group ind-scheme of all automorphisms of $\cD$. I.e., for a commutative $\bC$-algebra $A$, the $A$-points of $\Aut$ 
are power series without constant term and with invertible $t$-coefficient, while $A$-points of $\Aut^+$ allow nilpotent constant term. The
group law (in both cases) is defined by composition of power series. Note that the action of $\Aut^+$ on $\cD$ extends to an 
action on the formal punctured disc $\cD^{\times}:=\Spec(\bC((t)))$.

We have a natural action of $\Aut^+$ on the algebra $\fz$. This action is induced by an action on $\bV$, which in turn comes from actions on $\fg((t))$ and $\fg[[t]]$.
There is also an action of $\Aut^+$ on $\Op_{\ld{G}}(\cD)$, which comes from the action of $\Aut^+$ on $\cD$.

The first compatibility of the Feigin-Frenkel isomorphism $\vph$ is that it is equivariant for these two $\Aut^+$-actions.

\subsection{} The second compatibility which we recall is somewhat more subtle, and is a principal result of \cite{bd}. To formulate it, we briefly recall the statement
of the geometric Satake equivalence, proved e.g. in \cite{mirkovic-vilonen}.

\subsection{} Let $\Gr_G$ be the affine Grassmannian, i.e., the moduli space of $G$-bundles on $\cD$ with trivialization on $\cD^{\times}$, and let $1_{\Gr_G}$ be its ``trivial" $\bC$-point. Let $G(K)$ be the
group ind-scheme of maps of $\cD^{\times}$ into $G$, and let $G(O)$ be its group subscheme consisting of maps which extend to $\cD$. Note that the associated (Tate) Lie algebras
are $\fg((t))$ and $\fg[[t]]$ respectively. There is an action of $G(K)$ on $\Gr_G$ by changing the trivialization. Via the point $1_{\Gr_G}$, this action realizes $\Gr_G$ as the 
quotient $G(K)/G(O)$ (see \cite{bd} 4.5 for a precise formulation of this result).

Recall from \cite{bd} Section 4 that there is a unique\footnote{Here it is important that $G$ is simply-connected: otherwise there is a neutral gerbe of choices.} central group ind-scheme extension $\widetilde{G(K)}$ 
of $G(K)$ by $\bG_m$ split over $G(O)$ with an isomorphism $\Lie(\widetilde{G(K)})\isom\fgh$ as extensions of $\fg((t))$ by $\bC=\Lie(\bG_m)$ split over $\fg[[t]]$.
Therefore, there is an induced line bundle on $\Gr_G$ which we call ``critical," and we let $D_{\Gr_G}\modules$ be the category of 
critically twisted $D$-modules on $\Gr_G$.\footnote{This category is canonically equivalent to the category of $D$-modules on $\Gr_G$.}

The following is a rough statement of geometric Satake, which suffices for our purposes.

\begin{thm}\label{t:satake}

The convolution and fusion structures on the abelian category $D_{\Gr_G}\modules^{G(O)}$ of strongly $G(O)$-equivariant (critically twisted) $D$-modules on $\Gr_G$ make this category into a 
symmetric monoidal category. There is a canonical equivalence of symmetric monoidal categories
between $D_{\Gr_G}\modules^{G(O)}$ and $\Rep(\ld{G})$ the category of (possibly infinite-dimensional) representations of $\ld{G}$, considered as a symmetric monoidal category
under tensor product. Under this equivalence, the irreducible representation of highest weight $\check{\lambda}$ corresponds to the intersection cohomology $D$-module on the stratum
$G(O)\cdot\check{\lambda}(t)\cdot 1_{\Gr_G}$ in $\Gr_G$.

\end{thm}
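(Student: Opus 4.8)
The plan is to follow the Tannakian strategy of Lusztig, Ginzburg and Mirkovi\'c--Vilonen. Recall first that the $G(O)$-orbits on $\Gr_G$ are the strata $\Gr_G^{\ld\la} := G(O)\cdot\ld\la(t)\cdot 1_{\Gr_G}$, indexed by dominant coweights $\ld\la$ of $G$ (equivalently dominant weights of $\ld{G}$), with $\dim\overline{\Gr_G^{\ld\la}} = \langle 2\rho,\ld\la\rangle$ where $\rho$ is the half-sum of the positive roots of $\fg$; since $2\rho$ takes even values on the coroot lattice of $G$, these dimensions have constant parity within each linkage pattern, and a standard argument with the Iwahori stratification shows that the stalks and costalks of the $\IC$-sheaves $\IC_{\overline{\Gr_G^{\ld\la}}}$ are concentrated in degrees of a single parity. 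In particular $D_{\Gr_G}\modules^{G(O)}$ is semisimple with simple objects the $\IC_{\overline{\Gr_G^{\ld\la}}}$. The first real task is to upgrade the convolution product $\cF_1\star\cF_2 := m_*(\cF_1\,\widetilde{\star}\,\cF_2)$ --- formed using the twisted product $\Gr_G\,\widetilde{\times}\,\Gr_G$ and the multiplication morphism $m$ to $\Gr_G$ --- to an \emph{exact} monoidal structure. The geometric input is that $m$ is stratified semismall on the relevant closed subvarieties; together with the parity vanishing this forces $\cF_1\star\cF_2$ to be again a direct sum of the $\IC_{\overline{\Gr_G^{\ld\la}}}$ with no higher or lower perverse cohomology, so $\star$ is exact in each variable and $(D_{\Gr_G}\modules^{G(O)},\star)$ is monoidal.

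Next I would construct the fiber functor $F := \bigoplus_i H^i(\Gr_G,-):D_{\Gr_G}\modules^{G(O)}\to\Vect$. Parity vanishing makes $F$ exact, and it is faithful because every simple object has nonzero total cohomology. That $F$ is a tensor functor is the Beilinson--Drinfeld observation that both $\star$ and $F$ are shadows of the factorization (fusion) structure on the family of affine Grassmannians over powers of a smooth curve $X$ (pulled back to the disc): over a point of $X^2$ the global Grassmannian degenerates from $\Gr_G\times\Gr_G$ away from the diagonal to $\Gr_G$ on it, nearby cycles identify convolution with fusion, and the K\"unneth formula then equips $F$ with a tensor structure and a commutativity constraint. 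One must correct a sign here --- the naive fusion commutativity constraint differs from the one making the Tannakian group an honest (rather than super) group by the parity of the perverse cohomological degree, and one absorbs this by twisting the constraint accordingly. Granting this, $F$ is a faithful exact symmetric monoidal functor to $\Vect$, and Tannakian reconstruction produces a pro-affine group scheme with $D_{\Gr_G}\modules^{G(O)}\simeq\Rep(\ld{G}')$ carrying $F$ to the forgetful functor; semisimplicity of the source makes $\ld{G}'$ pro-reductive.

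Finally, one identifies $\ld{G}'$ with the Langlands dual $\ld{G}$ and, simultaneously, matches simples with $\IC$-sheaves of strata. Restricting $F$ along the semi-infinite orbits $N(K)\cdot\ld\mu(t)\cdot 1_{\Gr_G}$ --- equivalently, hyperbolic localization with respect to a regular cocharacter of $T$ --- canonically splits $F=\bigoplus_{\ld\mu}F_{\ld\mu}$ into weight functors indexed by the coweight lattice of $T$, each $F_{\ld\mu}$ a single cohomology group by the Mirkovi\'c--Vilonen cycle dimension estimate. This exhibits a maximal torus of $\ld{G}'$ with character lattice the coweight lattice of $T$, shows that the highest weight occurring in $F(\IC_{\overline{\Gr_G^{\ld\la}}})$ is exactly $\ld\la$ (so $\IC_{\overline{\Gr_G^{\ld\la}}}$ is the irreducible of highest weight $\ld\la$), and reduces the determination of the roots and coroots of $\ld{G}'$ to the rank-one case: for each simple coroot direction the relevant sub-Grassmannian is that of $SL_2$ or $PGL_2$, which is explicit, and one reads off that the root datum of $\ld{G}'$ is dual to that of $G$, in the adjoint form forced by the connectedness of $\Gr_G$ (which uses that $G$ is simply connected). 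Hence $\ld{G}'\simeq\ld{G}$. I expect the main obstacle to be exactly this combined package: getting semismallness plus parity vanishing to yield \emph{exactness} of $\star$ while \emph{also} pinning down the commutativity constraint with the correct sign, and then the rank-one computation together with the $\pi_1$/center bookkeeping needed to distinguish $\ld{G}$ from an isogenous group.
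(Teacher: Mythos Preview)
The paper does not give its own proof of Theorem~\ref{t:satake}; it simply quotes the result and refers to \cite{mirkovic-vilonen} for the proof. Your proposal is a faithful outline of exactly the Mirkovi\'c--Vilonen argument cited there (Tannakian reconstruction via the total cohomology fiber functor, fusion for the commutativity constraint, weight functors from semi-infinite orbits, and the rank-one identification of the root datum), so it is correct and aligned with what the paper invokes.
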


For $V\in\Rep(\ld{G})$, we let $\fF_V$ denote the corresponding twisted $D$-module on $\Gr_G$.

\subsection{} We have a global sections functor $\Ga=\Ga(\Gr_G,-):D_{\Gr_G}\modules\lto\fgh\modules$\footnote{It is important to note that we're taking sections 
as a mere $\sO$-module.} coming from the infinitesimal action of $\fgh$ on the critical line bundle on $\Gr_G$.
In fact, this functor naturally maps to $\fgh\modules_{reg}$ the category
of ``regular" $\fgh$-modules: modules on which the kernel of $\fZ\lto\fz$ acts trivially, i.e., on which $\fZ$ acts via its 
quotient $\fz$.\footnote{Here is a sketch of a proof: note that $\Ga(\Gr_G,\delta_e)$
is the vacuum module $\bV$, so the result is true for this $D$-module. For a general point $g\in\Gr_G$, $\Ga(\Gr_G,\delta_g)$ is $\bV$ twisted via the adjoint action by a lift
of $g$ to $G(K)$, so the result is true here by centrality. But then the map from $\fZ$ to differential operators on $\Gr_G$ fiberwise kills the kernel of $\fZ\lto\fz$, so the same
holds true in general.} Finally, one sees that $\Ga:D_{\Gr_G}\modules^{G(O)}\lto\fgh\modules_{reg}^{G(O)}$, where the right hand side is the category of $G(O)$-integrable modules. 

\begin{thm}\emph{(Feigin-Frenkel-Beilinson-Drinfeld)}\label{t:ff}
There is an isomorphism $\Spec(\fz)\lto \Op_{\ld{G}}(\cD)$ compatible with the action of $\Aut^+$ on both sides and such that the functor
$\Ga:D_{\Gr_G}\modules^{G(O)}\lto\fgh\modules_{reg}^{G(O)}$ is naturally a map of categories over the stack $pt/\ld{G}$.
\end{thm}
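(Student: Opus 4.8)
The plan is to realize the algebra $\fz$ and the scheme $\Op_{\ld{G}}(\cD)$ simultaneously through the global sections functor $\Ga$, using geometric Satake to ``see'' the dual group $\ld{G}$. Concretely, I would study the composite symmetric monoidal functor
\[
\Rep(\ld{G}) \xrightarrow{\ \simeq\ } D_{\Gr_G}\modules^{G(O)} \xrightarrow{\ \Ga\ } \fgh\modules_{reg}^{G(O)},
\]
where the first arrow is Theorem \ref{t:satake}. The key structural input is that $\fgh\modules_{reg}^{G(O)}$ is, in a suitable sense, equivalent to quasicoherent sheaves (or $\fz$-modules with extra structure) on $\Spec(\fz)$ --- this uses that $\bV$ is a compact projective generator of this category and that its endomorphisms are exactly $\fz$, so that $M \mapsto \Hom_{\fgh}(\bV,M)$ is an equivalence onto $\fz\modules$. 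Under this identification, $\Ga$ becomes a symmetric monoidal functor $\Rep(\ld{G}) \to \fz\modules$, i.e. (by Tannakian reconstruction / the fact that symmetric monoidal functors out of $\Rep(H)$ to $R\modules$ classify $H$-torsors over $\Spec R$) precisely the data of a $\ld{G}$-torsor over $\Spec(\fz)$; this is exactly the assertion that $\Ga$ is a map of categories over $pt/\ld{G}$. The heart of the matter is then to identify this torsor, together with its extra structure coming from the $\Aut^+$-action, with the one on $\Op_{\ld{G}}(\cD)$ from Remark \ref{r:bg}, and thereby to identify $\Spec(\fz)$ with $\Op_{\ld{G}}(\cD)$ itself.

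The steps, in order, would be roughly as follows. First, establish the abstract representability: show $\Ga$ is symmetric monoidal, colimit-preserving, and lands in $\fz\modules$, so it defines a $\ld{G}$-torsor $\cT$ over $\Spec(\fz)$; record its $\Aut^+$-equivariance. Second, extract from $\cT$ its associated $\ld{B}$-structure: this should come from the behavior of $\Ga$ on the stratification of $\Gr_G$ by $G(O)$-orbits, or equivalently from the semi-infinite/Iwahori structure --- the cocharacters $\ld\lambda$ give a filtration on each $\fF_V$ whose global sections produce a reduction to the dual Borel $\ld{B}$, matching the ``fiber at $0$'' $\ld{B}$-bundle. Third, produce the connection $\nabla$: here one uses the $\Aut^+$-action and in particular the action of the derivation $t\dd_t$ (or rather the full $\Aut^+ \ltimes$ translation structure) to get a flat connection along $\cD$ on the associated $\ld{G}$-bundle, i.e. one enlarges the picture to the disc $\cD$ rather than just its closed point by considering the family of vacuum modules / the adjoint twists that appeared in the footnote sketch that $\Ga$ lands in $\fgh\modules_{reg}$. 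Fourth, verify the oper conditions \eqref{i:grif} and \eqref{i:nondeg}: the Griffiths transversality should be automatic from the construction of the $\ld{B}$-reduction via the orbit filtration (the principal cocharacter controls which weight spaces appear), and the nondegeneracy \eqref{i:nondeg} should follow from the fact that $\fF_{V}$ for $V$ the first fundamental-type representations generate --- nonvanishing of the relevant sections corresponds to the images of these generators being ``as free as possible,'' which is a faithfulness/flatness statement for $\Ga$. This produces a canonical map $\Spec(\fz) \to \Op_{\ld{G}}(\cD)$ compatible with $\Aut^+$.

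The final step is to prove this map is an isomorphism. Since both sides are (non-canonically) infinite-dimensional affine spaces and the map is $\Aut^+$-equivariant, I would argue by a filtered/graded comparison: put compatible filtrations on $\fz$ and on the ring of functions on $\Op_{\ld{G}}(\cD)$ (the PBW-type filtration on $\fz$ coming from $\bV$, and the filtration on opers by ``order of the pole'' / the grading induced by the $\bG_m \subset \Aut$ acting by loop rotation), check the map is filtered, and reduce to an isomorphism on associated graded. On the associated graded side, $\gr\fz$ is identified (classical limit) with functions on the space of $\ld{\fg}$-valued ``opers'' which is just $\Sym$ of the Feigin-Frenkel invariants, and geometric Satake degenerates to the statement about $T^*\Gr_G$ / the Kostant slice, so the graded map becomes the classical Kostant isomorphism $\fz_{cl} \cong \bC[\ld{\fg}]^{\ld{G}} \cong \bC[\text{Kostant slice}]$, which is an isomorphism of finitely generated pieces in each degree. \textbf{The main obstacle} I anticipate is precisely this last identification of the connection $\nabla$ and the verification that the torsor-plus-connection-plus-$\ld B$-reduction assembled from $\Ga$ genuinely satisfies the oper nondegeneracy condition \eqref{i:nondeg} --- that is, translating the categorical/Tannakian output of geometric Satake into the concrete differential-geometric data of an oper, and checking the ``miraculous'' nonvanishing; everything else is either formal (Tannakian reconstruction, the equivalence $\fgh\modules_{reg}^{G(O)} \simeq \fz\modules$) or a standard associated-graded reduction to Kostant's theorem.
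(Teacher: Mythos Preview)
Your overall architecture---build a Tannakian functor $\Rep(\ld{G})\to\fz\modules$ out of $\Ga$, upgrade it to an oper, then prove the resulting map $\Spec(\fz)\to\Op_{\ld{G}}(\cD)$ is an isomorphism---is exactly the ``birth of opers'' strategy the paper follows in Section~\ref{s:opers}. But you have misidentified where the difficulty lies, and the step you call ``formal'' is in fact the entire content of the paper.

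The serious gap is your assertion that $\bV$ is a compact projective generator of $\fgh\modules_{reg}^{G(O)}$, so that $M\mapsto\Hom_{\fgh}(\bV,M)$ is an equivalence with $\fz\modules$. This is \emph{not} available without the Feigin--Frenkel theorem: the paper explicitly remarks (after Corollary~\ref{c:injection}) that the surjectivity of $M^{G(O)}\otimes_{\fz}\bV\to M$ for general $M$ is proved in \cite{affinegrassmannian} \emph{using} Feigin--Frenkel, so invoking it here is circular. What is available a priori is only the conditional statement Corollary~\ref{c:vac-ext-param}: the map is an isomorphism \emph{provided} $M^{G(O)}$ is already known to be flat over $\fz$. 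So for your Tannakian functor $V\mapsto \Ga(\Gr_G,\fF_V)^{G(O)}$ to land in projective $\fz$-modules and be symmetric monoidal, you must first prove, independently, that $H^i(\Gr_G,\fF_V)=0$ for $i>0$ and that $\Ga(\Gr_G,\fF_V)^{G(O)}$ is projective over $\fz$. This is Theorem~\ref{t:main}, and it is the main original contribution of the paper; its proof occupies all of Section~\ref{s:vanishing} (an inductive argument over the height of primes in $\fz$, using the Drinfeld--Sokolov functor $\Psi$ and the Finiteness Theorem of Section~\ref{s:finiteness}). Your proposal does not mention higher cohomology vanishing at all.

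By contrast, the step you flag as the ``main obstacle''---verifying the oper conditions on the Tannakian output---is comparatively soft. The paper does not build the connection and check Griffiths transversality and nondegeneracy by hand; instead it invokes the recognition criterion Proposition~\ref{p:oper}, which says that an $\Aut^+$-equivariant $\ld{G}$-bundle with an $\Aut$-equivariant $\ld{B}$-reduction on $\Spec(A)$ (for $A$ satisfying mild $L_0$-grading conditions) automatically comes from a map to $\Op_{\ld{G}}(\cD)$ once the induced $\ld{H}$-bundle is $\rho(\omega_{\cD}/t\omega_{\cD})$. The $\ld{B}$-reduction is produced not from the orbit filtration as you suggest, but from the lowest $L_0$-eigenspace lines $\ell^{\check\lambda}\subset\Ga(\Gr_G,\fF_{V^{\check\lambda}})^{G(O)}$ via the Pl\"ucker formalism (Lemma~\ref{l:key}). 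Finally, for the isomorphism, your associated-graded reduction to Kostant is plausible, but the paper instead checks injectivity of $\vph^*$ by factoring through the big Wakimoto module $\bW^{\fg[[t]]}$ and the Miura transform; an $L_0$-character count then finishes the argument.
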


More concretely, this second condition means the following: for $\fF_V\in D_{\Gr_G}\modules^{G(O)}$ we have:
\[
\Ga(\Gr_G,\fF_V)\isom \bV\otimes_{\fz}\cV_{\Op_{\ld{G}}(\cD)}
\]
\noindent where $\cV_{\Op_{\ld{G}}(\cD)}$ is the twist by $V$ of the canonical $\ld{G}$-bundle on
$\Op_{\ld{G}}(\cD)$ as in Remark \ref{r:bg}. Moreover, this identification is compatible with tensor products of representations.

\begin{rem}
As explained in \cite{bd}, Theorem \ref{t:ff} in this form and the convolution formalism of \emph{loc. cit.} Section 7 combine to give a 
construction of Hecke eigenmodules for global opers on smooth projective curves.
\end{rem}

\begin{rem}
By \cite{bd} 3.5.9, Theorem \ref{t:ff} characterizes the Feigin-Frenkel isomorphism uniquely.
\end{rem}

\subsection{} The goal for this text is to prove Theorem \ref{t:ff} by methods which differ from the original methods. Note that \cite{feigin-frenkel} originally proved that there is
an $\Aut^+$-equivariant isomorphism, but did not prove the compatibility with geometric Satake. Our method differs from theirs in that this compatibility is built into the proof. The present
technique also does not use the ``screening operators" of \emph{loc. cit.}

\subsection{} To do this, we use the ``birth of opers" construction from \cite{bd} to give a (visibly $\Aut^+$-equivariant) map $\Spec(\fz)\lto\Op_{\ld{G}}(\cD)$. Then a construction of \cite{wakimoto}
automatically implies that this map is an isomorphism. 

As in \cite{bd}, the birth of opers construction relies on a purely representation-theoretic description of the modules 
$\Ga(\Gr_G,\fF_V)$, namely, that these modules are direct sums of copies of the vacuum module and that the higher cohomologies of their
global sections vanish. However, \cite{bd} deduces this description from the Feigin-Frenkel theorem. Our method is to prove this description of global sections directly and then
perform the birth of opers construction.

\subsection{} Let us describe the structure of the current text.

In Section \ref{s:rep-theory}, we recall those parts of the structure of $\fgh\modules_{reg}$ which can be proved without the Feigin-Frenkel theorem. In Section \ref{s:ds}, we recall the properties of the
``Whittaker" semi-infinite cohomology functor. Here we also formulate a finiteness statement for semi-infinite cohomology.
In Section \ref{s:vanishing}, the only section of the present text with any claim to originiality, we prove the above description of the cohomology of global sections of critically twisted spherical $D$-modules
on the affine Grassmannian. In Section \ref{s:opers} we recall the birth of opers construction of \cite{bd} and explain how to deduce Theorem \ref{t:ff} from it. In Section \ref{s:finiteness},
we prove this finiteness statement.

\subsection{Acknowledgements} The author is grateful to Sasha Beilinson for patiently introducing us to this subject and for suggesting this problem to us at Botany Pond in the spring of 2009.
The author is greatly indebted to his graduate advisor Dennis Gaitsgory for many helpful conversations: his ideas were indespensible for the present text.

\section{Representation theory at the critical level with regular central character}\label{s:rep-theory}

\subsection{} We will use this section to collect some known facts about the category $\fgh\modules_{reg}$.

\subsection{} Note that $\bV$ has a PBW filtration compatible with the action of $G(O)$, and therefore $\fz=\bV^{G(O)}$ inherits a canonical filtration. We let
$\bV^{cl}=\gr(\bV)$. We define $\fz^{cl}=\gr(\bV)^{G(O)}$. There is a canonical map $\gr(\fz)\into\fz^{cl}$.

The starting point is the following ``quantization" result:

\begin{thm}\label{t:gr}
The natural map $\gr\fz\into\fz^{cl}$ is an isomorphism.
\end{thm}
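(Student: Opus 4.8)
The plan is to identify $\fz^{cl}=\gr(\bV)^{G(O)}$ explicitly and then compare with $\gr\fz$ by a dimension count in each graded piece. Since $\bV = \Ind_{\fg[[t]]}^{\fgh}\bC$, the associated graded $\bV^{cl}$ is (as a $G(O)$-module, or even a $\fg[[t]]$-module with its natural grading) the symmetric algebra $\Sym(\fg((t))/\fg[[t]])$ on the ``negative half'' $\fg\otimes t^{-1}\bC[t^{-1}]$. Identifying $\fg\otimes t^{-1}\bC[t^{-1}]$ with $\fg^*\otimes t^{-1}\bC[t^{-1}]$ via the Killing form, we have $\bV^{cl}\simeq \bC[\fg^*\otimes t^{-1}\bC[t^{-1}]]$, functions on a (pro-finite-dimensional) space. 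Classical invariant theory then identifies the $G$-invariants; one wants the statement that the $G(O)$-invariants of this algebra form a polynomial algebra whose generators can be read off from the fundamental invariants $P_1,\dots,P_\ell$ of $\fg$ (the Chevalley generators of $\bC[\fg^*]^G$, of degrees $d_i+1$): applying each $P_i$ ``spread along the disc'' and taking the $t$-expansion coefficients produces a free polynomial family generating $\fz^{cl}$. This is the analogue, at the level of symbols, of the statement that opers form an affine space; concretely $\Spec(\fz^{cl})\cong \bigoplus_i \omega_{\cD}^{\otimes(d_i+1)}\otimes\bC((t))/\bC[[t]]$-type object — but the clean way to phrase it is: $\fz^{cl}$ is a polynomial algebra on countably many generators with an explicit grading.

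The second ingredient is the classical Feigin–Frenkel-type computation of the \emph{Poisson} center of the classical limit, i.e. the statement that $\fz^{cl}$ is exactly the space of $\fg[[t]]$-invariant (equivalently, here, $G(O)$-invariant) functionals, with no further constraints — this uses that the relevant cohomology vanishes, or more elementarily that the $G(O)$-action on $\Sym(\fg\otimes t^{-1}\bC[t^{-1}])$ has the expected ring of invariants. I would cite or reprove the Raïs–Tauvel / Beilinson–Drinfeld computation (\cite{bd} 2.4.x) that gives $\fz^{cl}$ as a free polynomial algebra with Poincaré series $\prod_{i=1}^{\ell}\prod_{j\geq 1}(1-q^{d_i+j})^{-1}$, matching the symbol count of the expected generators of $\fz$.

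With both sides understood as graded vector spaces, the argument concludes formally. The map $\gr\fz\into\fz^{cl}$ is injective by construction (it is the symbol map for the PBW filtration on the invariants). For surjectivity it suffices to show, degree by degree, that $\dim(\gr\fz)_n \geq \dim \fz^{cl}_n$; equivalently that $\fz$ is ``at least as big'' as $\fz^{cl}$ predicts. Here I would construct enough elements of $\fz$ by hand: the Sugawara-type and Feigin–Frenkel central elements (segal–sugawara vectors) give explicit invariant vectors in $\bV^{G(O)}$ whose symbols are precisely the generators of $\fz^{cl}$ identified above. Concretely, lift each Chevalley invariant $P_i$ to a $\fgh$-endomorphism of $\bV$ — the key classical fact being that $P_i$ applied to currents yields, after suitable normal ordering, a vector annihilated by $\fg[[t]]$ at the critical level (this is exactly where criticality enters: the anomaly/derivative terms that would obstruct $\fg[[t]]$-invariance cancel only for $\kappa=\kappa_{crit}$). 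The symbols of these lifts and their $t$-derivatives are algebraically independent in $\fz^{cl}$ by the previous paragraph, so they generate a polynomial subalgebra of $\fz$ of the right size, forcing $\gr\fz = \fz^{cl}$.

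\medskip

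\emph{Main obstacle.} The crux is the existence and normal-ordering of the central (Segal–Sugawara) lifts of the higher Casimirs $P_2,\dots,P_\ell$ at the critical level, i.e. producing genuine elements of $\fz=\bV^{G(O)}$ (not just of its associated graded) whose symbols are the prescribed generators of $\fz^{cl}$. For $P_1$ (the quadratic Casimir) this is the classical Sugawara construction and the criticality condition is visible directly; for the higher invariants one needs either an explicit formula (Chervov–Molev, or the original Feigin–Frenkel screening-operator construction — which this paper is trying to avoid) or a more structural argument. A cleaner route that sidesteps explicit formulas: show that the \emph{graded} algebra $\fz^{cl}$ is \emph{generated in a way that lifts}, e.g. by exhibiting a filtered flat family deforming $\bV^{cl}$ to $\bV$ that is $G(O)$-equivariant and for which invariants are flat (so $\dim\fz_n$ is constant in the family), reducing the whole theorem to the classical computation of $\fz^{cl}$. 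Establishing that flatness of invariants — equivalently, that $H^{>0}$ of the relevant complex computing $G(O)$-invariants vanishes compatibly with the filtration — is the real work, and I expect it to be the hardest step.
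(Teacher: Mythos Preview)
Your diagnosis of the problem is accurate: injectivity of $\gr\fz\hookrightarrow\fz^{cl}$ is free, the classical side $\fz^{cl}$ is computed by \cite{bd} 2.4, and everything comes down to producing enough elements in $\fz$ to match the graded dimensions. You also correctly isolate the obstacle: lifting the higher Chevalley invariants to genuine central elements of $\bV$ at critical level is exactly the hard part, and the known ways to do it (explicit Chervov--Molev formulas, or the original Feigin--Frenkel screening-operator argument) are what this paper is trying to avoid. Your fallback suggestion --- a flat $G(O)$-equivariant family with flat invariants --- is not obviously easier than the original statement; you have not indicated how you would establish that flatness, and it looks circular.

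The paper (following \cite{frenkel}) sidesteps the lifting problem entirely, and the trick is worth knowing. Rather than compare $\bV^{G(O)}$ with its classical limit directly, one interposes the affine Verma module $\bM_0=\Ind_{\Lie(I)}^{\fgh}\bC$ and compares $I$-invariants. Two inputs replace your missing step: (i) an algebro-geometric argument shows the classical map $\bM_0^{cl,I}\to\bV^{cl,I}$ is surjective; (ii) the $L_0$-character of $\bM_0^I=\End_{\fgh}(\bM_0)$ is computed not by exhibiting elements but by identifying $\bM_0$ with a Wakimoto module and invoking the known computation of endomorphisms of Wakimoto modules (\cite{fg2} 13.1.2, 13.3.1). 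This forces $\gr(\bM_0^I)\to\bM_0^{cl,I}$ to be an isomorphism by a character count, and a short diagram chase (using that $I$-invariants equal $G(O)$-invariants on $G(O)$-integrable modules) then pushes surjectivity down to $\gr\fz\to\fz^{cl}$. The point is that the Wakimoto realization gives the size of $\End(\bM_0)$ for free, without ever writing down a higher Sugawara element; your approach, by contrast, demands exactly such a construction and stalls there.
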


This is proved in \cite{frenkel} Section 9.3. We will outline the proof from \emph{loc. cit.} in Sections \ref{ss:quant-start}-\ref{ss:quant-finish}.

\subsection{} Note that Theorem \ref{t:gr} implies in particular that $\fz$ is non-canonically isomorphic to a polynomial algebra on countably many generators. Indeed, because polynomial algebras are free
among commutative algebras, it suffices to see that $\fz^{cl}$ is a polynomial algebra. This is shown in \cite{bd} 2.4. 

We therefore deduce that the conclusion of the following result holds for $\fz$:

\begin{lem}\label{l:res}
Any finitely presented module $N$ over an infinite polynomial algebra $A$ over $\bC$ admits a finite resolution by finite rank free $A$-modules.
\end{lem}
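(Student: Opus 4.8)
The plan is to reduce the statement to a finiteness property of the ring $A = \bC[x_1, x_2, \ldots]$ (polynomials in countably many variables) that makes it behave, for the purposes of resolving a \emph{single} finitely presented module, like a finite polynomial ring. The key observation is that a finite presentation $A^m \rar{} A^n \onto N$ involves only finitely many elements of $A$, hence only finitely many of the variables: there is a finite subset $S \subset \{x_1, x_2, \ldots\}$ and a finite polynomial subalgebra $A_0 = \bC[S] \subset A$, together with a finitely presented $A_0$-module $N_0$, such that $N \simeq A \otimes_{A_0} N_0$. Indeed, one chooses $S$ to contain all variables appearing in the entries of a presentation matrix for $N$; then that same matrix defines $N_0$ over $A_0$, and base change along the (faithfully flat) inclusion $A_0 \into A$ recovers $N$.

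Next I would invoke the Hilbert syzygy theorem for the Noetherian regular ring $A_0$: since $A_0$ is a polynomial ring in finitely many variables over a field, it has finite global dimension, so the finitely generated module $N_0$ admits a finite resolution
\[
0 \lto F_k \lto \cdots \lto F_1 \lto F_0 \lto N_0 \lto 0
\]
by finite rank free $A_0$-modules $F_i$. Applying the exact functor $A \otimes_{A_0} (-)$ — exact because $A$ is a free, in particular flat, $A_0$-module — yields an exact complex
\[
0 \lto A\tens_{A_0} F_k \lto \cdots \lto A\tens_{A_0} F_0 \lto N \lto 0,
\]
and each $A \otimes_{A_0} F_i$ is a finite rank free $A$-module of the same rank as $F_i$. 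This is the desired resolution.

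The only point requiring a little care — and the one I'd flag as the main (minor) obstacle — is the descent step: verifying that a finitely presented $A$-module really does come from a finite polynomial subalgebra. One must be slightly careful that ``finitely presented'' is used, not merely ``finitely generated'': a finitely generated module over $A$ need not be finitely presented (as $A$ is non-Noetherian), but the statement hypothesizes a finite presentation, so both the generators and the relations are captured by finitely many polynomials, hence by finitely many variables. Once $A_0$ and $N_0$ are in hand, everything else is the standard syzygy theorem plus flat base change, with no further subtleties. I would also remark that the length $k$ of the resolution may be bounded by $|S|$, the number of variables actually occurring, though the statement as given does not require a uniform bound.
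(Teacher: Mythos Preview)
Your proof is correct and is exactly the argument the paper has in mind: the paper's one-line proof says ``Finite presentation allows us to reduce the result to the case of a polynomial algebra on finitely many generators, where it is well-known,'' and your descent to a finite polynomial subalgebra $A_0$ followed by Hilbert's syzygy theorem and flat base change is precisely this reduction spelled out in full.
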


\begin{proof}

Finite presentation allows us to reduce the result to the case of a polynomial algebra on finitely many generators, where it is well-known.

\end{proof}

\subsection{} We have the following result:

\begin{prop}\label{p:flat}
The vacuum module $\bV$ is flat over $\fz$.
\end{prop}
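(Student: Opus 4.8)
The plan is to deduce flatness of $\bV$ over $\fz$ from the ``classical'' statement at the associated graded level, using the filtration-compatibility already set up. Recall that $\bV$ carries its PBW filtration, that $\fz=\bV^{G(O)}\subset\bV$ with the induced filtration, and that $\gr(\bV)=\bV^{cl}$ and $\gr(\fz)=\fz^{cl}$ (the latter by Theorem \ref{t:gr}). So it suffices to prove that $\bV^{cl}$ is flat over $\fz^{cl}$, and then run a standard filtered-module argument to lift flatness from the associated graded to $\bV$ over $\fz$. Concretely: if $M$ is a filtered module over a filtered ring $R$ (with exhaustive, bounded-below, separated filtrations), and $\gr M$ is flat over $\gr R$, then $M$ is flat over $R$ — one checks that for any finitely generated ideal $I\subset R$ the map $I\otimes_R M\to M$ is injective by filtering both sides and passing to $\gr$, where the corresponding statement holds because $\gr M$ is flat; one must be slightly careful that the filtration on $I$ is the induced one and that $\gr$ is exact on the relevant sequences, but this is routine.

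The geometric input is the identification of $\Spec(\bV^{cl})$ and $\Spec(\fz^{cl})$. By a classical computation (recorded in \cite{bd} 2.4 and \cite{frenkel}), $\bV^{cl}=\Sym(\fg((t))/\fg[[t]])$ is the algebra of functions on $\fg^*[[t]]\,dt$ — equivalently on the space of ``$\fg$-connections''/jets into $\fg^*$ on the disc — while $\fz^{cl}$ is the algebra of functions on the space of $G[[t]]$-orbits, which by the Kostant slice picture is $\Op^{cl}_{\ld G}(\cD)$, an infinite-dimensional affine space; and the map $\Spec(\bV^{cl})\to\Spec(\fz^{cl})$ is the quotient map sending a connection to its ``invariants.'' The key point is that this map is \emph{flat}: using the exponential coordinates coming from the $\fg[[t]]$-action on the nilpotent slice, one sees that $\Spec(\bV^{cl})\to\Spec(\fz^{cl})$ is (non-canonically) a projection from a product, i.e. $\bV^{cl}$ is a free $\fz^{cl}$-module. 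This is exactly the classical statement that the affine analogue of the Kostant section trivializes the adjoint quotient; it is essentially the content of the ``birth of opers'' picture at the classical level, and is the step I would cite from \cite{bd} / \cite{frenkel} rather than reprove.

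The main obstacle is bookkeeping with the infinite-dimensionality: $\fz$ is a polynomial algebra on countably many generators, $\bV$ is not finitely generated over it, and one should make sure the filtered-to-graded flatness argument is valid without Noetherian hypotheses. The clean way is to reduce to finitely generated ideals of $\fz$ — flatness is detected on those — each of which already lives in a finitely generated polynomial subalgebra, so one lands in the classical finite-dimensional situation where everything (including exactness of $\gr$) is standard. One also needs the filtrations in sight to be exhaustive and separated, which they are: the PBW filtration on $\bV$ is exhaustive and bounded below, hence separated, and it induces such a filtration on $\fz$. Granting Theorem \ref{t:gr} to identify $\gr\fz$ with $\fz^{cl}$ and the cited classical freeness of $\bV^{cl}$ over $\fz^{cl}$, the argument then goes through.
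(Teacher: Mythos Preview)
Your overall strategy matches the paper's exactly: reduce to the semi-classical statement that $\bV^{cl}$ is flat over $\fz^{cl}$, invoke Theorem~\ref{t:gr} to identify $\gr(\fz)$ with $\fz^{cl}$, and then use the standard fact that a filtered module is flat once its associated graded is. The paper dispatches this in two lines, citing \cite{ef} for the classical input; your extra care with the non-Noetherian filtered argument is fine and arguably an improvement in exposition.

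The gap is in your treatment of the classical step. The claim that ``using the exponential coordinates coming from the $\fg[[t]]$-action on the nilpotent slice, one sees that $\Spec(\bV^{cl})\to\Spec(\fz^{cl})$ is (non-canonically) a projection from a product'' does not hold up. A Kostant-slice argument of this shape only controls the \emph{regular} locus: the $G[[t]]$-orbits through the slice do not exhaust $\fg^*[[t]]\,dt$, and the scheme-theoretic fibers of the quotient map are not mutually isomorphic (already false for $\fg\to\fg/\!\!/G$ in finite dimensions), so there is no product decomposition at the level of schemes. The correct classical input is the Eisenbud--Frenkel result \cite{ef}, which proves flatness of $\bV^{cl}$ over $\fz^{cl}$ by a genuinely different mechanism: it uses that the nilpotent cone is a complete intersection together with Musta\c{t}\u{a}'s theorem on jet schemes of locally complete intersection singularities. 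This is not recorded in \cite{bd} or \cite{frenkel} in the form you need. Replace your sketch and citation for the classical flatness with a reference to \cite{ef}, and the rest of your argument goes through as written.
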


\begin{proof}

In \cite{ef} the corresponding semi-classical statement is proved, i.e., that $\bV^{cl}$ is flat over $\fz^{cl}$. 
Since a filtered module is flat if its associated graded is, by Theorem \ref{t:gr} this semi-classical statement suffices.
\end{proof}

\subsection{} The following theorem is shown in \cite{frenkel-teleman}. Note that they deduce the theorem
from the corresponding semi-classical statement, which is proved in \cite{fgt}.

\begin{thm}\label{t:frenkel-teleman}
The $\fz$-modules $\Ext_{\fgh\modules^{G(O)}}^i(\bV,\bV)$ are flat over $\fz$.
\end{thm}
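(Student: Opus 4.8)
The plan is to follow the strategy indicated in the statement: reduce the flatness of $\Ext^i_{\fgh\modules^{G(O)}}(\bV,\bV)$ over $\fz$ to the corresponding semiclassical assertion about $\Ext$-groups computed with $\bV^{cl}$ over $\fz^{cl}$, which is the content of \cite{fgt}. First I would set up a filtration on the complex computing $\RHom_{\fgh\modules^{G(O)}}(\bV,\bV)$. The natural source of such a filtration is the PBW filtration on $\bV$ together with the induced filtration on the standard (relative Chevalley–Eilenberg, or bar-type) complex computing $G(O)$-equivariant $\Ext$ out of $\bV$; since the $G(O)$-action is compatible with the PBW filtration, this is filtered, and one can identify the associated graded complex with the analogous object computing $\Ext$ in the semiclassical world, i.e. $\Ext^\bullet$ of $\bV^{cl}$ against $\bV^{cl}$ as modules over the appropriate graded (commutative) object, compatibly with the identification $\gr\fz\cong\fz^{cl}$ of Theorem \ref{t:gr}.

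The cohomological input is then a standard spectral-sequence-and-filtration principle: if a complex $C\dot$ of $\fz$-modules carries an exhaustive bounded-below filtration whose associated graded computes cohomology groups that are flat over $\fz^{cl}=\gr\fz$, then the cohomology of $C\dot$ is flat over $\fz$. Concretely, I would argue degreewise — the spectral sequence of the filtered complex has $E_1$-page given by the (flat) semiclassical $\Ext$-groups, and flatness of each $E_1$-term forces all differentials to be strict and each subsequent page, hence the abutment $\gr\Ext^i$, to remain flat over $\gr\fz$; a filtered $\fz$-module whose associated graded is flat over $\gr\fz$ is itself flat over $\fz$. This last implication is exactly the lemma already invoked in the proof of Proposition \ref{p:flat}, so it may be cited rather than reproved.

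The point requiring care — and what I expect to be the main obstacle — is the identification of the associated graded of the equivariant $\Ext$-complex with the semiclassical $\Ext$-complex of \cite{fgt}. One must choose a model for $G(O)$-equivariant $\Ext$ (for instance via the relative Lie algebra cohomology of $\fg[[t]]\subset\fgh$ with coefficients in $\Hom_{\bC}(\bV,\bV)$, or via a Koszul-type resolution adapted to the pro-algebraic group $G(O)$) that is simultaneously (a) filtered by the PBW filtration on the two copies of $\bV$, and (b) has associated graded literally the complex whose cohomology \cite{fgt} computes. There are convergence subtleties because $\bV$ is an infinite-rank module and $G(O)$ is of infinite type, so one should either work with the topological/pro-finite-dimensional structure throughout or reduce to finite-dimensional pieces using that $\bV$ is a union of $G(O)$-stable finite-dimensional subspaces in each PBW degree. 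Once the associated graded is correctly identified and the filtration shown to be exhaustive and bounded below in each cohomological degree, the rest is the formal flatness argument above. I would therefore organize the write-up as: (1) recall the semiclassical theorem from \cite{fgt}; (2) construct the filtered equivariant $\Ext$-complex and identify its associated graded; (3) run the spectral sequence / filtration argument to conclude flatness over $\fz$.
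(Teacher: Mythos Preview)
Your overall strategy---reduce to the semi-classical statement via the PBW filtration---is exactly what the paper does: it simply cites \cite{frenkel-teleman}, noting that they deduce the result from the semi-classical computation of \cite{fgt}. So at the level of outline you are aligned with the intended argument.

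However, there is a genuine gap in your spectral-sequence step. You assert that ``flatness of each $E_1$-term forces all differentials to be strict and each subsequent page, hence the abutment $\gr\Ext^i$, to remain flat over $\gr\fz$.'' This is false as a general principle: a map between flat (even free) modules can have non-flat cokernel---think of multiplication by a nonunit on a polynomial ring. Flatness of $E_1$ alone does not propagate to $E_2$, let alone $E_\infty$. What you actually need is \emph{degeneration} of the spectral sequence at $E_1$, and this requires an additional input beyond flatness.

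In \cite{frenkel-teleman} the missing input is essentially a character argument: the semi-classical $\Ext$-groups computed in \cite{fgt} are not merely flat but free $\fz^{cl}$-modules with explicitly known $L_0$-graded characters, and one compares these with the Euler characteristic of the filtered (quantum) complex, which is filtration-independent. Equality of characters then forces all higher differentials to vanish. Once degeneration is established, your final step is correct: the cohomology of the filtered complex acquires a filtration whose associated graded is the (free) semi-classical $\Ext$, and the lemma you invoke from the proof of Proposition~\ref{p:flat} finishes the job. So your write-up should insert, between steps (2) and (3), an explicit degeneration argument using the $L_0$-character comparison; without it the deduction does not go through.
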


\subsection{} The Frenkel-Teleman theorem implies the following:

\begin{cor}\label{c:invariants}
For a $\fz$-module $N$, the natural map $N\lto(N\otimes_{\fz}\bV)^{G(O)}$ is an isomorphism.
\end{cor}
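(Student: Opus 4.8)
The plan is to deduce Corollary \ref{c:invariants} from the Frenkel-Teleman flatness theorem (Theorem \ref{t:frenkel-teleman}) together with the structure of $\fz$ as an infinite polynomial algebra and the resolution Lemma \ref{l:res}. First I would observe that the functor $N \mapsto (N\otimes_{\fz}\bV)^{G(O)}$ is the composite of $-\otimes_{\fz}\bV$ with $\Hom_{\fgh\modules^{G(O)}}(\bV,-)$, since $\bV$ generates the relevant category of $G(O)$-integrable modules and taking $G(O)$-invariants of $M$ amounts to $\Hom_{\fgh\modules^{G(O)}}(\bV,M)$; in particular the natural map $N\to (N\otimes_\fz\bV)^{G(O)}$ is adjunction to $\bV\otimes_\fz\bV\to\bV$, i.e. the action map, which for $N=\fz$ is manifestly an isomorphism. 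So the claim is that this natural transformation, which is an isomorphism on the free module of rank one, is an isomorphism for all $N$.

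Next I would reduce to the case where $N$ is finitely presented. Since $\fz$ is a filtered colimit of finitely presented modules and all the functors in sight ($-\otimes_\fz\bV$, $(-)^{G(O)}$, which is $\Hom$ out of a compact-enough object) commute with filtered colimits, it suffices to treat finitely presented $N$. Then by Lemma \ref{l:res} — applicable because $\fz$ is a polynomial algebra on countably many generators by Theorem \ref{t:gr} and \cite{bd} 2.4 — such an $N$ admits a finite resolution $0\to F_n\to\cdots\to F_0\to N\to 0$ by finite rank free $\fz$-modules. Because $\bV$ is flat over $\fz$ (Proposition \ref{p:flat}), $-\otimes_\fz\bV$ is exact, so it carries this resolution to a finite resolution of $N\otimes_\fz\bV$. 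The point of Theorem \ref{t:frenkel-teleman} is that $\Ext^i_{\fgh\modules^{G(O)}}(\bV,\bV)$ is flat over $\fz$; applying the hypercohomology spectral sequence of the functor $\Hom_{\fgh\modules^{G(O)}}(\bV,-)$ to the complex $F\dot\otimes_\fz\bV$, the $E_1$-page has terms $\Ext^i_{\fgh\modules^{G(O)}}(\bV,F_j\otimes_\fz\bV)=\Ext^i_{\fgh\modules^{G(O)}}(\bV,\bV)\otimes_\fz F_j$, which for $i>0$ form an exact complex in $j$ (tensoring the exact resolution $F\dot$ with the flat module $\Ext^i$), hence contribute nothing; so only the $i=0$ row survives, giving $(N\otimes_\fz\bV)^{G(O)}=H^0$ of $F\dot$, which is $N$.

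The main obstacle is bookkeeping the spectral sequence / homological-algebra argument cleanly: one must make sure that $(-)^{G(O)}$ on $\fgh\modules_{reg}$ really is computed by the derived functor $\RHom_{\fgh\modules^{G(O)}}(\bV,-)$ with the stated $\Ext$-groups, that the resolution $F\dot\otimes_\fz\bV$ consists of objects adapted to this computation (e.g. that $\bV$-free-module twists behave like $\bV$ for the purpose of $\Ext^i(\bV,-)$, which is exactly the content of $\Ext^i(\bV, F_j\otimes_\fz\bV)\cong \Ext^i(\bV,\bV)\otimes_\fz F_j$ — true since $F_j$ is finite rank free), and that the flatness in Theorem \ref{t:frenkel-teleman} is flatness over $\fz$ in the variable we are resolving. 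Once these compatibilities are in place, the collapse of the spectral sequence and the identification of the surviving term with $N$ is formal. Finally, chasing the identification back to the natural map of the statement confirms it is the asserted isomorphism, completing the proof.
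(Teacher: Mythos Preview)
Your approach is essentially the paper's: reduce to finitely presented $N$ via filtered colimits (the paper invokes the Chevalley complex for this), resolve by finite free modules via Lemma~\ref{l:res}, and use flatness of $\bV$ (Proposition~\ref{p:flat}) together with flatness of the $\Ext^i_{\fgh\modules^{G(O)}}(\bV,\bV)$ (Theorem~\ref{t:frenkel-teleman}) to compare with the free case. The paper packages this as a quasi-isomorphism
\[
\RHom_{\fgh\modules^{G(O)}}(\bV,\bV)\underset{\fz}{\overset{L}{\otimes}} N \;\longrightarrow\; \RHom_{\fgh\modules^{G(O)}}(\bV, N\underset{\fz}{\overset{L}{\otimes}}\bV)
\]
and then takes $H^0$, but this is the same computation as your hypercohomology spectral sequence.

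One slip to fix: the rows $i>0$ of your $E_1$-page are \emph{not} exact. Tensoring the resolution $F_\bullet\to N$ with the flat module $\Ext^i(\bV,\bV)$ gives a complex whose homology is $\Ext^i(\bV,\bV)\otimes_\fz N$ concentrated in the column $j=0$, not zero. The correct statement is that flatness forces $E_2^{p,q}=0$ for $p\neq 0$ in \emph{every} row, so the spectral sequence degenerates at $E_2$ with $E_2^{0,q}=N\otimes_\fz\Ext^q(\bV,\bV)$; in particular $E_2^{0,0}=N$ computes the total degree $0$ term $(N\otimes_\fz\bV)^{G(O)}$, and the higher rows sit in total degree $\geq 1$ and do not interfere. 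With this correction your argument goes through.
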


\begin{proof}

The following argument is given in \cite{affinegrassmannian} 8.6. There is a natural map:
\[
\RHom_{\fgh\modules^{G(O)}}(\bV,\bV)\underset{\fz}{\overset{L}{\otimes}} N\lto\RHom_{\fgh\modules^{G(O)}}(\bV,N\underset{\fz}{\overset{L}{\otimes}}\bV).
\]
\noindent First, we claim that this map is a quasi-isomorphism. Both sides commute with filtered colimits in the 
$N$-variable because they are computed explicitly via the Chevalley complex. Therefore, we may assume $N$ is finitely presented. 
By Lemma \ref{l:res}, $N$ admits a finite resolution by finite rank free $\fz$-modules. 
This reduces the general statement of this quasi-isomorphism to the case of free modules, where it is obvious.

Next, we compute $H^0$ of both sides of this quasi-isomorphism. By Theorem \ref{t:frenkel-teleman}, $H^0$ of the left hand
side just $N$, while $H^0$ of the right hand side is $(N\otimes_{\fz}\bV)^{G(O)}$ by Proposition \ref{p:flat}. It is direct
to see that the induced isomorphism $N\isom(N\otimes_{\fz}\bV)^{G(O)}$ given by taking $H^0$ coincides with the map from the
statement of the corollary.

\end{proof}

\subsection{} The following statement was proved in \cite{bd} 6.2.1. We repeat its proof here for convenience.

\begin{prop}\label{p:generator}
For $M\in\fgh\modules_{reg}^{G(O)}$, $M^{G(O)}\neq{0}$ if $M\neq{0}$.
\end{prop}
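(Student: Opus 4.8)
The plan is to argue by contradiction: suppose $M \in \fgh\modules_{reg}^{G(O)}$ is nonzero but $M^{G(O)} = 0$. The key tool is Corollary \ref{c:invariants}, which for any $\fz$-module $N$ identifies $N$ with $(N \otimes_{\fz} \bV)^{G(O)}$. The first step is to realize that we should apply this corollary not to $M$ itself but to a $\fz$-module built out of $M$, namely $N := \Hom_{\fgh\modules^{G(O)}}(\bV, M)$ (or the analogous $\RHom$), since $\bV$ is the object that ``generates'' in the sense relevant to the global sections functor, and $\bV \otimes_{\fz} N$ should then recover the $G(O)$-invariants of $M$.

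Here is the more careful version of the strategy. Since $\bV$ is $\fz$-flat (Proposition \ref{p:flat}) and $\fz = \bV^{G(O)}$, the functor $N \mapsto N \otimes_{\fz} \bV$ from $\fz$-modules to $\fgh\modules_{reg}^{G(O)}$ is exact, and Corollary \ref{c:invariants} says it is fully faithful (its composite with $(-)^{G(O)}$ is the identity). The next step is to show that this functor is in fact essentially surjective, or at least that every nonzero $M$ receives a nonzero map from some $\bV \otimes_{\fz} N$ — equivalently, that objects of the form $\bV \otimes_{\fz} N$ generate the category. Granting this, if $M \neq 0$ then there is a nonzero $N$ and a nonzero map $\bV \otimes_{\fz} N \to M$; applying $(-)^{G(O)}$ and using that $(-)^{G(O)}$ is exact on this category together with Corollary \ref{c:invariants} identifying $(\bV \otimes_{\fz} N)^{G(O)} = N \neq 0$, one concludes $M^{G(O)} \neq 0$, provided the map stays nonzero on invariants. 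To ensure the latter, one instead takes $N = M^{G(O)}$ itself (assumed zero for contradiction), or works with a subobject: pick a nonzero vector, generate a sub-$\fgh$-module, take its $G(O)$-invariants, and track through the adjunction. Alternatively, and perhaps more cleanly, one shows directly that the adjunction counit $\bV \otimes_{\fz} \Hom_{\fgh}(\bV, M) \to M$ — or a derived/averaged version of it — is an isomorphism for $M$ in this category, using the Frenkel-Teleman vanishing (Theorem \ref{t:frenkel-teleman}) to control the higher $\Ext$'s exactly as in the proof of Corollary \ref{c:invariants}; then $M^{G(O)} = \Hom_{\fgh}(\bV, M) = 0$ would force $M = 0$.

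The main obstacle I anticipate is establishing the generation/essential-surjectivity statement: showing that the objects $\bV \otimes_{\fz} N$ exhaust (or generate) $\fgh\modules_{reg}^{G(O)}$, or equivalently that the counit map is an isomorphism for all $M$ in the category. This requires knowing that $\bV$ is a projective generator of $\fgh\modules_{reg}^{G(O)}$ — projectivity should follow from adjunction (induction from $\fg[[t]]$ is left adjoint to a forgetful functor) combined with exactness of $(-)^{G(O)}$ coming from $G(O)$ being pro-unipotent-by-reductive so that averaging is exact, while generation is the subtle point. One natural route: any $M \in \fgh\modules_{reg}^{G(O)}$ is a union of finitely generated submodules, each a quotient of a sum of copies of $\bV$ (by definition of induction and the $G(O)$-integrability), and then descent of the ``regular central character'' condition through this presentation, together with $\fz$-flatness of $\bV$, lets one write $M$ as $\bV \otimes_{\fz}(-)$ applied to $M^{G(O)}$. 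Once that structural statement is in place, the proposition is immediate: $M \neq 0 \Rightarrow M^{G(O)} \neq 0$ because the functor $N \mapsto \bV \otimes_{\fz} N$ is faithful (indeed fully faithful by Corollary \ref{c:invariants}).
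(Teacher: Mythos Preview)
Your proposal is circular. The statement you are trying to prove is precisely that $\bV$ is a generator of $\fgh\modules_{reg}^{G(O)}$: since $\Hom_{\fgh\modules}(\bV,M)=M^{\fg[[t]]}=M^{G(O)}$ for $G(O)$-integrable $M$, the assertion ``every nonzero $M$ receives a nonzero map from $\bV$'' is literally equivalent to ``$M^{G(O)}\neq 0$.'' So when you write that ``generation is the subtle point'' and then sketch a route to it, you are sketching a route to the proposition itself. Your suggested route also fails: $G(O)$-integrability only says that each vector is fixed by some congruence subgroup $G^{(n)}(O)$, not by all of $G(O)$, so a finitely generated $G(O)$-integrable submodule is a quotient of an induction from some $G^{(n)}(O)$, not of a sum of copies of $\bV$. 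Likewise, Corollary~\ref{c:invariants} only computes invariants of modules already of the form $N\otimes_{\fz}\bV$; it gives you nothing for a general $M$ until you know such modules generate, which is again the proposition. (And the stronger statement that the counit $M^{G(O)}\otimes_{\fz}\bV\to M$ is an isomorphism is Corollary~\ref{c:vac-ext-param}, whose proof in the paper \emph{uses} Proposition~\ref{p:generator}.)

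The paper's proof is completely different and supplies the missing idea: it exploits the \emph{regular} central character hypothesis directly via the Sugawara operator. One first observes that $M^{G^{(1)}(O)}\neq 0$ because $G^{(1)}(O)$ is pro-unipotent. On this subspace the finite-dimensional $\fg$ acts, and the critical Casimir $C_{crit}$ differs from the Sugawara element $S_0\in\fZ$ by something in $U(\fgh)\cdot t\fg[[t]]$, which already kills $M^{G^{(1)}(O)}$. Since $S_0$ lies in the kernel $I$ of $\fZ\to\fz$ and $M$ is regular, $S_0$ acts by zero, hence $C_{crit}$ acts by zero on $M^{G^{(1)}(O)}$; for a semisimple $\fg$ this forces the $G$-action there to be trivial, so $M^{G^{(1)}(O)}=M^{G(O)}$. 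The content is thus a concrete computation linking the central character to the Casimir, not an abstract categorical argument.
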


\begin{proof}

First, note that the group $G^{(1)}(O):=\Ker(\on{ev}:G(O)\lto G)$ is pro-unipotent and acts on $M$, so
$M^{G^{(1)}(O)}\neq{0}$. The group $G$ acts on $M^{G^{(1)}(O)}$, so the Casimir operator $C_{crit}$ defined by
the critical inner product on $\fg$ acts on it. We will show that $M^{G^{(1)}(O)}=M^{G(O)}$
by showing that the action of $C_{crit}$ is zero.

To this end, recall e.g. from \cite{bd} 3.6.14. that there exists the Sugawara operator
$S_0\in I\subset \fZ$, which is of the form $C_{crit}+U(\fgh)\cdot t\fg[[t]]$.
This immediately gives the result since $I$ is assumed to annihilate $M$.

\end{proof}

\subsection{} Let us note the following useful corollary to Proposition \ref{p:generator}.

\begin{cor}\label{c:injection}
For $M\neq 0$ a regular $G(O)$-integrable $\fgh$-module, the natural map $M^{G(O)}\otimes_{\fz}\bV\lto M$ is a non-zero injection.
\end{cor}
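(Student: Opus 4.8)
The plan is to build the map $M^{G(O)}\otimes_{\fz}\bV\to M$ and then check injectivity via the invariants functor $(-)^{G(O)}$, using that this functor is faithful on nonzero objects of $\fgh\modules_{reg}^{G(O)}$ (Proposition \ref{p:generator}) and exact enough on the relevant class of modules. First I would construct the map: the inclusion $M^{G(O)}\hookrightarrow M$ is a map of $\fz$-modules (since $\fz$ acts on all of $M$ through its central action and preserves invariants), and $M$ is in particular a $\fgh$-module, so by adjunction — or simply because $\bV$ is induced from the trivial $\fg[[t]]$-module and $M^{G(O)}\subset M^{\fg[[t]]}$ — we get a canonical $\fgh$-module map $M^{G(O)}\otimes_{\fz}\bV\to M$ which is $G(O)$-equivariant, hence a morphism in $\fgh\modules_{reg}^{G(O)}$.

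Next I would apply $(-)^{G(O)}$ and identify the left-hand side. Setting $N:=M^{G(O)}$, Corollary \ref{c:invariants} gives $N\isom (N\otimes_{\fz}\bV)^{G(O)}$, and one checks the composite
\[
N \isom (N\otimes_{\fz}\bV)^{G(O)} \lto M^{G(O)} = N
\]
is the identity (this is the compatibility already noted at the end of the proof of Corollary \ref{c:invariants}, applied to the canonical map). So $(-)^{G(O)}$ of our morphism is an isomorphism. Now let $K$ be the kernel of $M^{G(O)}\otimes_{\fz}\bV\to M$, an object of $\fgh\modules_{reg}^{G(O)}$. Taking $G(O)$-invariants is left exact, so $K^{G(O)}$ injects into $(M^{G(O)}\otimes_{\fz}\bV)^{G(O)}=N$, and since $N\to M^{G(O)}$ is injective (it is the identity), this forces $K^{G(O)}=0$. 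By Proposition \ref{p:generator}, $K=0$, i.e. the map is injective.

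Finally, nonvanishing: since $M\neq 0$ and $M$ is regular and $G(O)$-integrable, Proposition \ref{p:generator} gives $M^{G(O)}\neq 0$; picking any nonzero $v\in M^{G(O)}$, its image under $M^{G(O)}\otimes_{\fz}\bV\to M$ (via $v\otimes \mathbf{1}\mapsto$ the image of $v$) is nonzero because $v\mapsto v$ under the identity $N\isom M^{G(O)}$, so the map is a nonzero injection. The main obstacle I anticipate is the bookkeeping in the second paragraph: verifying that the invariants of the canonical morphism really is the identity on $N$, i.e. matching up the abstract isomorphism of Corollary \ref{c:invariants} with the concrete evaluation map $v\otimes\mathbf{1}\mapsto v$ — this is where one must be careful that no scalar or twist has crept in, but it follows from the explicit description of $\bV$ as induced from the trivial representation.
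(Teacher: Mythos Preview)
Your proposal is correct and follows essentially the same route as the paper: apply $(-)^{G(O)}$ to the canonical map, use Corollary~\ref{c:invariants} to see that the induced map on invariants is an isomorphism, deduce $K^{G(O)}=0$ for the kernel $K$ by left exactness, and conclude $K=0$ via Proposition~\ref{p:generator}; nonvanishing likewise comes from Proposition~\ref{p:generator}. You are simply more explicit about constructing the map and checking that the composite on invariants is the identity, which the paper leaves implicit.
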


\begin{proof}

That the map is non-zero is the statement of Proposition \ref{p:generator}.
Consider the kernel $K$ of the map $M^{G(O)}\otimes_{\fz}\bV\lto M$. 
We have: 
\[
K^{G(O)}=\Ker\bigg((M^{G(O)}\underset{\fz}{\otimes}\bV)^{G(O)}\lto M^{G(O)}\bigg).
\]
\noindent By Corollary \ref{c:invariants}, the map $(M^{G(O)}\otimes_{\fz}\bV)^{G(O)}\lto M^{G(O)}$ is
an isomorphism. Therefore $K^{G(O)}=0$, so Proposition \ref{p:generator} gives $K=0$ as desired.

\end{proof}

\begin{rem}
Note that \cite{affinegrassmannian} 8.8 shows that this map is actually an isomorphism, but their proof relies on the
Feigin-Frenkel theorem.
\end{rem}

\subsection{} We will also need the following statement, \cite{bd} 6.2.4.

\begin{thm}\label{t:vac-ext}
The group $\Ext_{\fgh\modules_{reg}}^1(\bV,\bV)=0$. 
\end{thm}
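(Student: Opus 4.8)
The plan is to reduce the vanishing of $\Ext^1_{\fgh\modules_{reg}}(\bV,\bV)$ to the $G(O)$-equivariant $\Ext^1$-statement of Theorem \ref{t:vac-ext}'s immediate predecessor, namely Theorem \ref{t:frenkel-teleman} together with Proposition \ref{p:generator} and Corollary \ref{c:invariants}. Concretely, I would start with an extension $0\to\bV\to M\to\bV\to 0$ in $\fgh\modules_{reg}$ and show it splits. The first step is to note that $M$ is $G(O)$-integrable: the subgroup $G^{(1)}(O)=\Ker(G(O)\to G)$ is pro-unipotent, so taking $G^{(1)}(O)$-invariants is exact on the relevant modules, and since $\bV$ is $G(O)$-integrable the outer terms of the extension are $G^{(1)}(O)$-invariant; the Sugawara/Casimir argument from the proof of Proposition \ref{p:generator} then shows that the $G^{(1)}(O)$-invariants agree with the $G(O)$-invariants, so $M\in\fgh\modules_{reg}^{G(O)}$. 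Hence the extension already lives in the equivariant category, and it suffices to show $\Ext^1_{\fgh\modules_{reg}^{G(O)}}(\bV,\bV)=0$.

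For the equivariant vanishing I would use Theorem \ref{t:frenkel-teleman} and the strategy of Corollary \ref{c:invariants}. Apply the exact functor $M\mapsto M^{G(O)}$ to the extension $0\to\bV\to M\to\bV\to 0$ (exactness of invariants here follows from Proposition \ref{p:generator}: invariants is left exact, and the obstruction to right exactness is an $H^1$ which one can kill, or more simply use the $G^{(1)}(O)$-pro-unipotent-plus-reductive-$G$ decomposition of the functor to see it is exact on these modules). This produces a sequence of $\fz$-modules $0\to\fz\to M^{G(O)}\to\fz\to 0$. By Theorem \ref{t:frenkel-teleman} the relevant $\Ext^1$ over $\fz$, which computes the obstruction to splitting this sequence of $\fz$-modules, is flat over $\fz$; but I actually want it to vanish, which I would extract from the explicit Chevalley/Chevalley-Eilenberg complex computation together with the structure of $\fz$ as a polynomial algebra (Theorem \ref{t:gr}): a flat module that is also finitely presented and whose fibers one can compute pointwise must be free, and a short exact sequence of $\fz$-modules with free cokernel splits. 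So the $\fz$-sequence splits; pick a splitting $s:\fz\to M^{G(O)}$.

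Finally, I would promote the $\fz$-linear splitting back to a $\fgh$-module splitting. The image $s(\fz)\subset M^{G(O)}$ is a $\fz$-submodule mapping isomorphically to $\fz=\bV^{G(O)}$, so by Corollary \ref{c:injection} the induced map $s(\fz)\otimes_{\fz}\bV\to M$ is an injection of $\fgh$-modules whose composition with $M\to\bV$ is the natural isomorphism $\bV^{G(O)}\otimes_{\fz}\bV\iso\bV$ (using $\bV^{G(O)}=\fz$ and Corollary \ref{c:invariants}); this gives the desired section $\bV\to M$ of the original extension. The main obstacle is the middle step: extracting genuine vanishing (not merely flatness) of the $\fz$-valued $\Ext^1$ from Theorem \ref{t:frenkel-teleman}. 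I expect this requires combining flatness with a pointwise computation at a closed point of $\Spec\fz$ — where the $\Ext$ can be identified with a Lie-algebra cohomology group that one checks is one-dimensional (just the identity endomorphism survives), forcing the flat module to be locally free of the expected rank and hence, for the short exact sequence at hand, to force splitting — and some care is needed to make the reduction to finitely presented modules legitimate, exactly as in the proof of Corollary \ref{c:invariants}.
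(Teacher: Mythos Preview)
The paper does not actually prove Theorem~\ref{t:vac-ext}: it simply cites \cite{bd}~6.2.4. So there is no in-paper argument to compare against, and your task is really to produce a valid proof.

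Unfortunately your proposal has a genuine circular gap. The crucial step is your claim that the functor $M\mapsto M^{G(O)}$ is exact on the short exact sequence $0\to\bV\to M\to\bV\to 0$, i.e.\ that $M^{G(O)}\to\bV^{G(O)}=\fz$ is surjective. But for a $G(O)$-integrable module one has $M^{G(O)}=M^{\fg[[t]]}=\Hom_{\fgh}(\bV,M)$ by Frobenius reciprocity, so the surjectivity you need is exactly the statement that $\Hom_{\fgh}(\bV,M)\to\Hom_{\fgh}(\bV,\bV)$ is onto---which is precisely the splitting of the extension you are trying to prove. Your two proposed justifications both fail: Proposition~\ref{p:generator} only says invariants are nonzero, not that the invariants functor is right exact; and the ``pro-unipotent plus reductive'' argument is simply false---invariants under a (pro-)unipotent group are \emph{not} exact (already for $\bG_a$ acting on its standard $2$-dimensional representation the map on invariants is not surjective). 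The obstruction here is genuinely $H^1(\fg[[t]],\bV)$, and nothing you have written kills it.

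Two smaller points. First, in Step~2 you write that ``the outer terms of the extension are $G^{(1)}(O)$-invariant''; you mean $G^{(1)}(O)$-\emph{integrable}, and the Sugawara argument of Proposition~\ref{p:generator} does not show $M$ is $G(O)$-integrable---it shows something about invariants inside an already-integrable module. (Integrability of $M$ can in fact be salvaged by a Whitehead-lemma argument for the residual $\fg$-action, but not the way you wrote it.) Second, you misidentify the main obstacle as extracting vanishing from the flatness of Theorem~\ref{t:frenkel-teleman}. Once you \emph{have} a short exact sequence $0\to\fz\to M^{G(O)}\to\fz\to 0$ of $\fz$-modules, it splits trivially since $\fz$ is free---no appeal to Frenkel--Teleman is needed. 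The real obstacle is the previous step: you never legitimately obtain that short exact sequence. The actual argument in \cite{bd}~6.2 is more delicate and uses the structure of $\fZ$ near $\fz$ rather than a formal invariants manipulation.
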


As in \emph{loc. cit.}, this implies the following:

\begin{cor}\label{c:vac-ext}
If $M$ is a $G(O)$-integrable regular $\fgh$-module such that $M^{G(O)}$ is a projective $\fz$-module, then the map $M^{G(O)}\otimes_{\fz}\bV\lto M$ is an isomorphism.
\end{cor}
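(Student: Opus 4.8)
The plan is to reduce the statement to Corollary~\ref{c:injection} by showing that the injection $M^{G(O)}\otimes_{\fz}\bV\lto M$ has vanishing cokernel, and to control the cokernel using the projectivity of $M^{G(O)}$ together with the Ext-vanishing of Theorem~\ref{t:vac-ext}. Write $Q$ for the cokernel, so that we have a short exact sequence
\[
0\lto M^{G(O)}\otimes_{\fz}\bV\lto M\lto Q\lto 0
\]
in $\fgh\modules_{reg}^{G(O)}$. If I can show $Q=0$ I am done; by Proposition~\ref{p:generator} it suffices to show $Q^{G(O)}=0$.

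First I would take $G(O)$-invariants of the short exact sequence. Since $\bV$ is flat over $\fz$ (Proposition~\ref{p:flat}) and $M^{G(O)}$ is projective hence flat, the term $(M^{G(O)}\otimes_{\fz}\bV)^{G(O)}\cong M^{G(O)}$ by Corollary~\ref{c:invariants}, and the map to $M^{G(O)}$ is the identity; so the connecting map shows $Q^{G(O)}$ injects into $H^1$ of the invariants functor applied to $M^{G(O)}\otimes_{\fz}\bV$, i.e.\ into $\Ext^1_{\fgh\modules^{G(O)}}(\bV,\ M^{G(O)}\otimes_{\fz}\bV)$. Here I use that $G(O)$-invariants on the regular block is computed by $\RHom_{\fgh\modules^{G(O)}}(\bV,-)$, since $\bV$ represents the invariants functor up to the Corollary~\ref{c:invariants} identification. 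Now I invoke the base-change quasi-isomorphism already established inside the proof of Corollary~\ref{c:invariants}:
\[
\RHom_{\fgh\modules^{G(O)}}(\bV,\bV)\underset{\fz}{\overset{L}{\otimes}} M^{G(O)}\ \iso\ \RHom_{\fgh\modules^{G(O)}}(\bV,\ M^{G(O)}\underset{\fz}{\overset{L}{\otimes}}\bV),
\]
valid because $M^{G(O)}$ is finitely presented (being projective and hence, after Lemma~\ref{l:res}'s context, a direct summand of a finite free module—or one first reduces to the finitely generated projective case and passes to a filtered colimit, both sides commuting with filtered colimits). Taking $H^1$ of the left-hand side: by Theorem~\ref{t:frenkel-teleman} the graded pieces $\Ext^i_{\fgh\modules^{G(O)}}(\bV,\bV)$ are flat over $\fz$, so the derived tensor product has no lower Tor contributions and $H^1$ of the left side is $\Ext^1_{\fgh\modules^{G(O)}}(\bV,\bV)\otimes_{\fz}M^{G(O)}$. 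But by Theorem~\ref{t:vac-ext}, $\Ext^1_{\fgh\modules_{reg}}(\bV,\bV)=0$; a small point to address is that Theorem~\ref{t:vac-ext} is about $\Ext$ in $\fgh\modules_{reg}$ while we need it in the $G(O)$-equivariant category, but the strongly $G(O)$-equivariant $\Ext$ between $G(O)$-integrable modules maps compatibly, and $\bV$ is cohomologically the same there (this is exactly how \cite{bd} 6.2.4 is used in \emph{loc. cit.}). Hence $H^1$ of the left-hand side vanishes, so $\Ext^1_{\fgh\modules^{G(O)}}(\bV,\ M^{G(O)}\otimes_{\fz}\bV)=0$, forcing $Q^{G(O)}=0$ and therefore $Q=0$ by Proposition~\ref{p:generator}.

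The main obstacle I anticipate is not the homological bookkeeping but the compatibility between the non-equivariant $\Ext$-vanishing of Theorem~\ref{t:vac-ext} and the $G(O)$-equivariant Ext groups that actually appear: one must be careful that $\RHom_{\fgh\modules^{G(O)}}(\bV,-)$ is what computes $G(O)$-invariants and that the relevant degree-one obstruction really is controlled by $\Ext^1_{\fgh\modules_{reg}}(\bV,\bV)$ after tensoring, rather than by some genuinely equivariant correction term. Once that identification is pinned down—exactly as in \cite{bd}—the argument is a direct diagram chase combining Corollary~\ref{c:invariants}, the base-change quasi-isomorphism, and Theorems~\ref{t:frenkel-teleman} and~\ref{t:vac-ext}.
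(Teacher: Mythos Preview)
Your argument has a genuine gap at exactly the point you flag as ``a small point to address.'' You need $\Ext^1_{\fgh\modules^{G(O)}}(\bV,\bV)=0$, but Theorem~\ref{t:vac-ext} only gives $\Ext^1_{\fgh\modules_{reg}}(\bV,\bV)=0$. These are different groups, and the equivariant one is in general \emph{nonzero}: Theorem~\ref{t:frenkel-teleman} is precisely the statement that the $\Ext^i_{\fgh\modules^{G(O)}}(\bV,\bV)$ are flat $\fz$-modules, a statement with content in degree $1$ only because that group does not vanish. The forgetful map from equivariant extensions to non-equivariant ones need not be injective (a non-equivariant splitting has no reason to be $G(O)$-equivariant), so vanishing of the target says nothing about the source. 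Your base-change computation is correct and yields $H^1=\Ext^1_{\fgh\modules^{G(O)}}(\bV,\bV)\otimes_{\fz}M^{G(O)}$, but this is then not zero.

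The fix is to run your long exact sequence with the \emph{non-equivariant} functor $\Hom_{\fgh\modules_{reg}}(\bV,-)$ instead. On $G(O)$-integrable modules this still computes $(-)^{G(O)}$ (since $G(O)$ is connected, $\fg[[t]]$-invariants agree with $G(O)$-invariants), so the first part of your argument goes through unchanged and $Q^{G(O)}$ injects into $\Ext^1_{\fgh\modules_{reg}}(\bV,\,M^{G(O)}\otimes_{\fz}\bV)$. For this group the base-change quasi-isomorphism of Corollary~\ref{c:invariants} is not available, but it is not needed: since $M^{G(O)}$ is projective (hence flat) over $\fz$, Lazard's theorem writes it as a filtered colimit of finite free modules, and $\Ext^1_{\fgh\modules_{reg}}(\bV,-)$ commutes with filtered colimits by \cite{fg2}~7.5.1, so the vanishing follows directly from Theorem~\ref{t:vac-ext}. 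This is exactly the route the paper takes in the proof of Corollary~\ref{c:vac-ext-param} (phrased there as a lifting argument rather than a long exact sequence, but the content is the same).
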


Actually, we will need a version of this with parameters. For a commutative algebra $A$ equipped with an algebra 
morphism\footnote{The following constructions make sense for any continuous morphism $\fZ\lto A$, but Corollary \ref{c:vac-ext-param} 
would certainly fail.} $\fz\lto A$, let $\fgh\modules_A$ denote the abelian category of regular $\fgh$-modules 
equipped with an action of $A$ by endomorphisms compatible with the action of $\fz$. That is, objects are $\fgh$-modules $M$ equipped with 
a $\fz$-algebra morphism $A\lto\End_{\fgh\modules}(M)$. The notion of morphism of such data is clear. Similarly, we have the corresponding $G(O)$-integrable category. Both of these categories are $A$-linear.

\begin{cor}\label{c:vac-ext-param}
If $A$ is flat over $\fz$ and $M\in\fgh\modules_A^{G(O)}$ is such that $M^{G(O)}$ is a flat $A$-module, then the map
$M^{G(O)}\otimes_{\fz}\bV\lto M$ is an isomorphism.
\end{cor}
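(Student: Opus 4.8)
The plan is to run the proof of Corollary \ref{c:vac-ext}, with Lazard's description of flat modules as filtered colimits of finite free ones playing the role of ``a projective module is a direct summand of a free one.''

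First, since $M^{G(O)}$ is flat over $A$ and $A$ is flat over $\fz$, the module $M^{G(O)}$ is flat over $\fz$; by Lazard's theorem we may thus write $M^{G(O)}=\colim_i L_i$ as a filtered colimit of finite free $\fz$-modules. There is nothing to prove if $M=0$, so assume $M\ne 0$, and write $\phi\colon M^{G(O)}\otimes_\fz\bV\lto M$ for the map in question; it is injective by Corollary \ref{c:injection}. Let $Q=\on{coker}(\phi)$. Since the relevant forgetful functors are exact, $Q$ is again a regular, $G(O)$-integrable $\fgh$-module (the cokernel being the same whether formed in $\fgh\modules_A^{G(O)}$ or in $\fgh$-modules), so by Proposition \ref{p:generator} it suffices to show $Q^{G(O)}=0$.

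Apply $\RHom_{\fgh\modules_{reg}^{G(O)}}(\bV,-)$ to the short exact sequence $0\lto M^{G(O)}\otimes_\fz\bV\lto M\lto Q\lto 0$. By Corollary \ref{c:invariants} (applied to the $\fz$-module $M^{G(O)}$), the map $(M^{G(O)}\otimes_\fz\bV)^{G(O)}\lto M^{G(O)}$ appearing in this sequence is an isomorphism, so the long exact sequence yields an injection
\[
Q^{G(O)}\;\into\;\Ext^1_{\fgh\modules_{reg}^{G(O)}}\!\bigl(\bV,\;M^{G(O)}\otimes_\fz\bV\bigr).
\]
It therefore remains to show that this $\Ext^1$-group vanishes. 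Now $M^{G(O)}\otimes_\fz\bV=\colim_i\bigl(L_i\otimes_\fz\bV\bigr)$ is a filtered colimit of finite direct sums of copies of $\bV$. The object $\bV$ is compact in $\fgh\modules_{reg}^{G(O)}$ — its functor of $G(O)$-invariants commutes with filtered colimits of discrete integrable modules — so $\Ext^1_{\fgh\modules_{reg}^{G(O)}}(\bV,-)$ commutes with this colimit; and for each $i$,
\[
\Ext^1_{\fgh\modules_{reg}^{G(O)}}\!\bigl(\bV,\;L_i\otimes_\fz\bV\bigr)\;\cong\;\Ext^1_{\fgh\modules_{reg}^{G(O)}}(\bV,\bV)^{\oplus\rk L_i}\;=\;0
\]
by Theorem \ref{t:vac-ext}. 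Hence $\Ext^1_{\fgh\modules_{reg}^{G(O)}}(\bV,\,M^{G(O)}\otimes_\fz\bV)=0$, so $Q^{G(O)}=0$, so $Q=0$, and $\phi$ is an isomorphism.

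The point needing care — and what I expect to be the main obstacle — is precisely this replacement of ``summand of a free module'' by the filtered-colimit description of flat modules: one must know that $\bV$ is compact in $\fgh\modules_{reg}^{G(O)}$ so that $\Ext^1(\bV,-)$ commutes with the colimit, and that Theorem \ref{t:vac-ext} is available in the $G(O)$-equivariant regular category (as in \cite{bd}), which in turn forces one to keep careful track of which $\Ext$-category (the full $\fgh\modules^{G(O)}$ versus its non-Serre subcategory of regular modules) each step takes place in. The hypotheses on $A$ enter only to guarantee $\fz$-flatness of $M^{G(O)}$, which is exactly what fails for an arbitrary continuous homomorphism $\fZ\lto A$.
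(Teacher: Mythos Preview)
Your argument is essentially the paper's: both deduce $\fz$-flatness of $M^{G(O)}$, apply Lazard's theorem, use that $\Ext^1(\bV,-)$ commutes with filtered colimits (the paper cites \cite{fg2}~7.5.1 for this---note that mere compactness of $\bV$ at the $\Hom$ level, which is all your parenthetical justifies, does not suffice), invoke Theorem~\ref{t:vac-ext}, and then kill the cokernel via Proposition~\ref{p:generator} and Corollary~\ref{c:invariants}. The one cosmetic difference is that the paper takes its $\Ext^1$ in $\fgh\modules_{reg}$ rather than in $\fgh\modules_{reg}^{G(O)}$, which sidesteps your stated worry about which category Theorem~\ref{t:vac-ext} is formulated in.
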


\begin{proof}

First, we claim that $\Ext_{\fgh\modules_{reg}}^1(\bV,\bV\otimes_{\fz}M^{G(O)})=0$. Indeed, by \cite{fg2} 7.5.1., $\Ext_{\fgh\modules_{reg}}^1(\bV,-)$ commutes with filtered colimits. Since
$A$ is flat over $\fz$ and $M^{G(O)}$ is flat over $A$, $M^{G(O)}$ is flat over $\fz$.
By Lazard's theorem, the condition that $M^{G(O)}$ is a flat $\fz$-module implies that it is a filtered colimit of (finite rank) free $\fz$-modules, so the vanishing now follows from 
Theorem \ref{t:vac-ext}. 

Now let us prove the result. Suppose for the sake of contradiction that the cokernel $C$ of the map $M^{G(O)}\otimes_{\fz}\bV\lto M$ is non-zero. Because
$C$ is regular and $G(O)$-equivariant, $C^{G(O)}=\Hom_{\fgh\modules}(\bV,C)\neq 0$, so there exists non-zero $\bV\lto C$. By the vanishing of the $\Ext$-group
above, this map lifts to a map $\bV\lto M$. But the map $M^{G(O)}\lto C^{G(O)}$ is zero because applying $G(O)$-invariants to the map $M^{G(O)}\otimes_{\fz}\bV\lto M$ gives an isomorphism
by Corollary \ref{c:invariants}. Therefore, we have a contradiction, implying that $C=0$. By Corollary \ref{c:injection} the map $M^{G(O)}\otimes_{\fz}\bV\lto M$ is an injection, which completes
the proof.
\end{proof}

\subsection{}\label{ss:quant-start} For the remainder of this section, we will outline the proof of Theorem \ref{t:gr} following
\cite{frenkel}. Let $I$ be the Iwahori subgroup of $G(K)$, i.e., $I=\on{ev}^{-1}(B)$ for
$\on{ev}:G(O)\lto G$ the evaluation map.
We have the Verma module $\bM_0:=\Ind_{\Lie(I)}^{\fgh}\bC=\Ind_{\fg[[t]]}^{\fgh}M_0$ where $M_0$ is the usual Verma module of weight 0 for $\fg$, considered as a $\fg[[t]]$-module via
the map $\fg[[t]]\lto\fg$. Note that $\bM_0$ is $I$-integrable.

There is a canonical surjection $\vph:\bM_0\lto\bV$ induced by the map $M_0\lto \bC$. Note also that there is a canonical PBW filtration on $M_0$ that is compatible with the action of Iwahori and the map $\vph$.
Therefore, we also have a map on the associated graded level: $\vph^{cl}:\bM_0^{cl}\lto\bV^{cl}$.

\subsection{} Note that $\Aut$ acts on $\bM_0$ and $\bV$ (in fact, all of $\Aut^+$ acts on $\bV$) compatibly with its action on $\fgh$, so each has an operator $L_0:=-t\cdot\frac{\partial}{\partial t}\in \Lie(\Aut)$ 
acting on them diagonalizably with integral eigenvalues (here we have chosen a uniformizer $t$ on $\cD$). The map $\bM_0\lto\bV$ is compatible with the action of $\Aut$ and therefore intertwines the two $L_0$ operators.

\subsection{} The proof of Theorem \ref{t:gr} procedes by first showing the following two results:

\begin{enumerate}

\item The map $\vph^{cl,I}:\bM_0^{cl,I}\lto\bV^{cl,I}$ of $I$-invariant vectors is surjective.

\item The formal character for the action of $L_0$ on $\bM_0^{I}$ exists (i.e., the eigenspaces are finite-dimensional)
and coincides with the $L_0$-character of $\bM_0^{cl,I}$.

\end{enumerate}

The first result admits a purely algebro-geometric interpretation and is proved as Corollary 9.5 in \cite{frenkel}. The second result for $\bM_0^{cl,I}$ follows by explicit computation.
To compute the $L_0$-character of $\bM_0^I$, one notes that $\bM_0^{I}=\End_{\fgh\modules}(\bM_0)$ and then uses the description of $\bM_0$ in terms of Wakimoto modules (\cite{fg2} 13.3.1) and
the computation of endomorphisms of Wakimoto modules (\cite{fg2} 13.1.2).

\subsection{}\label{ss:quant-finish} Now let us show that these two results imply Theorem \ref{t:gr}.

We have the following diagram, in which all maps are compatible with the action of the operator $L_0$:
\[\xymatrix{
\gr(\bM_0^{I})\ar[r]\ar@{^(->}[d]^{\alpha} & \gr(\bV^{I}) \ar@{^(->}[d] & \gr(\bV^{\fg[[t]]})\ar@{_(->}[l]^{\gamma'}\ar@{=}[r]\ar@{^(->}[d] & \gr(\fz)\ar@{^(->}[d] \\
\bM_0^{cl,I}\ar[r]^{\beta} & \bV^{cl,I} & \bV^{cl,\fg[[t]]}\ar@{=}[r]\ar@{_(->}[l]^{\gamma} & \fz^{cl}.
}\]

By computation of $L_0$-characters, we see that $\alpha$ is an isomorphism. Furthermore, we know that $\beta$ is a surjection. Next, we claim that
$\gamma$ and $\gamma'$ are isomorphisms. Indeed, because $\bV^{cl}$ and $\bV$ are $G(O)$-integrable, any vector invariant for the Iwahori subgroup is invariant for all of 
$G(O)$.\footnote{Proof: consider the residual action of $G$ on $\bV^{cl,G^{(1)}(O)}$.} Therefore, we have the following diagram:
\[\xymatrix{
\gr(\bM_0^{I})\ar[r]\ar[d]^{\simeq} & \gr(\bV^{I}) \ar@{^(->}[d] & \gr(\fz)\ar@{^(->}[d]\ar[l]_{\simeq} \\
\bM_0^{cl,I}\ar@{->>}[r] & \bV^{cl,I} & \fz^{cl}\ar[l]_{\simeq}.
}\]
\noindent The left square immediately implies that $\gr(\bV^{I})\into \bV^{cl,I}$ is surjective and therefore an isomorphism, and
this gives the desired result for $\gr(\fz)\into\fz^{cl}$.

\section{Drinfeld-Sokolov functor}\label{s:ds}

\subsection{} In this section, we review the properties of the Drinfeld-Sokolov functor $\Psi$ and formulate the Finiteness Theorem in terms of it.

\subsection{} Fix a character $\chi$ of $\fn((t))$ which vanishes on $\fn[[t]]$ and such that for each positive simple root $e_{\alpha}$ of $\fg$,
we have $\chi(\frac{1}{t}e_{\alpha})\neq 0$.

\subsection{} Suppose that $M$ is a $\fgh$-module. Consider the complex:
\[
\Psi(M):=C(\fn((t)),\fn[[t]],M\otimes\chi)
\]
\noindent of semi-infinite chains of the $\fn((t))$-module $M\otimes\chi$ with respect to the lattice $\fn[[t]]$. This complex is called the
\emph{Drinfeld-Sokolov reduction} of $M$. Note that by functoriality, this complex can be considered as a complex of discrete modules over the topological algebra $\fZ$.

\begin{rem}

This functor admits a finite-dimensional analogue: see \cite{kostant}. The Drinfeld-Sokolov functor is a kind of analogue of the functor of Whittaker vectors from $p$-adic representation theory. 

\end{rem}

Because $\chi|_{\fn[[t]]}=0$ and by construction of semi-infinite cohomology, there is a map of complexes:
\[
M^{\fn[[t]]}\lto\Psi(M)
\]
\noindent where $M^{\fn[[t]]}$ is regarded as a complex concentrated in degree zero. This map is functorial in $M$.

\subsection{} We have the following essential computation: 

\begin{thm}\label{t:ds}
$\Psi(\bV)$ has cohomology only in degree zero. Moreover, the natural map:
\[
\fz=\bV^{\fg[[t]]}\into\bV^{\fn[[t]]}\lto\Psi(\bV)
\]
\noindent is a quasi-isomorphism of $\fZ$-modules.
\end{thm}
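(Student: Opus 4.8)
The goal is to compute the Drinfeld-Sokolov reduction $\Psi(\bV)$ of the vacuum module and identify it with $\fz$. The plan is to pass to the associated graded with respect to the PBW filtration and reduce to a Koszul-type calculation, then lift back. First I would put the PBW filtration on $\bV$ — which is $G(O)$-equivariant and in particular $\fn[[t]]$-stable and compatible with the $\fn((t))$-action up to the filtration shift coming from the cocycle (the critical cocycle is irrelevant here since $\chi$ is an honest character of $\fn((t))$, abelian-ish in the relevant sense, and the Clifford/BRST differential is filtered). The semi-infinite chain complex $C(\fn((t)),\fn[[t]],\bV\otimes\chi)$ then acquires a filtration, and its associated graded is the analogous complex computed for $\bV^{cl}=\gr\bV$, but with the character $\chi$ replaced by the \emph{leading symbol} of $\chi$; since $\chi$ is linear, its symbol behaves like a shift by the element $\sum_\alpha \frac{1}{t}e_\alpha$ in the cochain differential. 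So $\gr\Psi(\bV)$ is identified with $\Psi^{cl}(\bV^{cl})$, the classical (symbol-level) Drinfeld-Sokolov reduction.

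Next I would carry out the classical computation. Here $\bV^{cl}=\Sym(\fg((t))/\fg[[t]])$ as a $\fg((t))$-module — really it's the coordinate ring of the space of connections, or equivalently $\bV^{cl}\cong \bC[\fg^*((t))\,dt / \fg^*[[t]]\,dt]$ — and $\Psi^{cl}$ is the Chevalley–Koszul complex imposing the moment-map condition that the $\fn((t))$-component of the connection equals (the symbol of) $\chi$. This is exactly the Kostant-section / Drinfeld–Sokolov gauge-fixing picture: the claim becomes that the Koszul complex for the $\fn((t))$-action on the ``slice'' is a resolution, i.e. the action is free in the appropriate pro-finite-dimensional sense, so that higher cohomology vanishes and $H^0$ is the functions on the space of opers-as-a-scheme, or rather on $\bV^{cl,\fg[[t]]}=\fz^{cl}$ after further reduction. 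The key input is that $\fn((t))$ acts freely (after the $\chi$-twist) on $\bV^{cl}$ transverse to a slice modeled on the Kostant section, a statement provable by an explicit triangular change of variables in the coordinates of $\fg((t))/\fg[[t]]$. This gives that $\gr\Psi(\bV)$ has cohomology only in degree zero equal to $\fz^{cl}$, and that the composite $\fz^{cl}\to \bV^{cl,\fn[[t]]}\to \gr\Psi(\bV)$ is an isomorphism.

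Finally I would lift. Since the filtration on $\Psi(\bV)$ is exhaustive and bounded below in each $L_0$-eigenspace (the $L_0$-grading splits the complex into finite-dimensional pieces — one should check, using that $\bV$ has finite-dimensional $L_0$-eigenspaces and that $\Psi$ preserves this, a point worth spelling out), a standard spectral-sequence-of-a-filtered-complex argument shows that $\Psi(\bV)$ itself has cohomology only in degree zero and that the map $\fz=\bV^{\fg[[t]]}\to \bV^{\fn[[t]]}\to\Psi(\bV)$ induces an isomorphism on cohomology, since it does so on associated graded and everything is compatible with the filtration and the $\fZ$-action. The main obstacle I anticipate is the classical freeness statement: making precise the sense in which $\fn((t))$ acts freely on the appropriate infinite-dimensional scheme so that the Koszul complex is exact in positive degrees — this is the place where the non-degeneracy hypothesis $\chi(\tfrac1t e_\alpha)\neq 0$ and the structure of simple roots really enter, and it requires care with completions and the passage between semi-infinite chains and an honest Koszul resolution. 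The bookkeeping with the two filtration shifts (PBW degree vs. the $t$-adic/$L_0$ weight built into $\chi$) is the other delicate point, but it is essentially the Drinfeld–Sokolov gauge-fixing argument and should go through.
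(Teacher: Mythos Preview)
Your approach is correct and is essentially the argument of \cite{bzf}~15.1.9, which the paper in fact cites for the vanishing of cohomology outside degree~$0$ and for the $L_0$-character computation. The PBW-filtration spectral sequence reducing to a classical Kostant-slice / Drinfeld--Sokolov freeness calculation is exactly what underlies that reference.

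Where the paper diverges from you is in the final identification. Rather than lifting the full isomorphism from the associated graded, the paper only extracts from \cite{bzf} that $\Psi(\bV)$ is concentrated in degree~$0$ with the same $L_0$-character as $\fz$, and then proves \emph{injectivity} of $\fz\to\Psi(\bV)$ by an independent argument using the big Wakimoto module $\bW$: the embedding $\bV\hookrightarrow\bW$ gives $\fz\hookrightarrow\bW^{\fg[[t]]}$, and by \cite{fg2}~12.4.1 the map $\bW^{\fg[[t]]}\to\Psi(\bW)$ is a quasi-isomorphism; a commutative square then forces $\fz\to\Psi(\bV)$ to be injective, whence the character match finishes.

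One point in your write-up should be made explicit. Your classical computation shows that the composite $\fz^{cl}\to(\bV^{cl})^{\fn[[t]]}\to\gr\Psi(\bV)$ is an isomorphism, but the associated graded of the actual map $\fz\to\Psi(\bV)$ has source $\gr(\fz)$, and a priori $\gr(\fz)\hookrightarrow\fz^{cl}$ is only an injection. To conclude that the filtered map is an isomorphism (rather than merely injective) you must invoke Theorem~\ref{t:gr}, which says $\gr(\fz)=\fz^{cl}$. This is available from Section~\ref{s:rep-theory}, so there is no circularity, but it is a genuine input you have silently used. The paper's route also uses Theorem~\ref{t:gr}, hidden inside the statement that the $L_0$-characters of $\fz$ and $\Psi(\bV)$ agree, so neither approach is cheaper on this point; the paper's preference for the Wakimoto argument is partly because $\bW$ reappears in Section~\ref{s:opers} when proving that $\vph^*$ is injective.
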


\begin{rem}
In cases such as $\bV$ in which $\Psi(\bV)$ is concentrated in cohomological degree zero, we will not distinguish
notationally between $\Psi(\bV)$ and $H^0(\Psi(\bV))$.
\end{rem}

\begin{proof}

By \cite{bzf} 15.1.9, this cohomology lives only in degree $0$. Moreover, \emph{loc. cit.} shows that the $L_0$-character of 
$\Psi(\bV)$ exists and is equal to the $L_0$-character of $\fz$. Therefore, it suffices to show that the map
$\fz\lto\Psi(\bV)$ is injective. 

To prove this, we appeal to the ``big" Wakimoto module construction. Recall that this construction gives a $\fgh$-module $\bW$ with
an embedding $\bV\into\bW$ (this is the vacuum module associated to a chiral algebra constructed e.g. in \cite{fg2} Section 10, where it is notated $\fD^{ch}(\overset{\circ}{G/B})_{crit}$). 
Since $\bV$ is a submodule of $\bW$, the map $\fz=\bV^{\fg[[t]]}\lto\bW^{\fg[[t]]}$ is an injection.

By (the proof of) \cite{fg2} 12.4.1, the composition $\bW^{\fg[[t]]}\into \bW^{\fn[[t]]}\lto\Psi(\bW)$ is a quasi-isomorphism. This gives a commuting diagram:
\[\xymatrix{
\fz\ar[d]\ar@{^(->}[r] & \bW^{\fg[[t]]}\ar[d]^{\simeq}\\
\Psi(\bV) \ar[r] & \Psi(\bW).
}\]
\noindent As the map $\fz\lto\Psi(\bW)$ is injective, the same must be true of $\fz\lto\Psi(\bV)$. 

\end{proof}

\subsection{} We now have the following result:

\begin{prop}\label{p:psi-exact}
The functor $\Psi:\fgh\modules_{reg}^{G(O)}\lto\fz\modules$ is exact and faithful.
\end{prop}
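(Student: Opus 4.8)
The goal is to show $\Psi \colon \fgh\modules_{reg}^{G(O)} \to \fz\modules$ is exact and faithful. Exactness is the heart: $\Psi$ is a priori only a right-derived-type construction (semi-infinite cohomology of a complex), so I need to see that it actually produces a single cohomology group functorially and carries short exact sequences to short exact sequences. Faithfulness will then follow from Proposition \ref{p:generator} together with Theorem \ref{t:ds}.

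First I would reduce exactness to a vanishing statement. For any $M \in \fgh\modules_{reg}^{G(O)}$, the complex $\Psi(M)$ computes semi-infinite cohomology $C(\fn((t)),\fn[[t]], M\otimes\chi)$, and a short exact sequence $0 \to M' \to M \to M'' \to 0$ gives a short exact sequence of such complexes, hence a long exact sequence in the cohomology $H^i(\Psi(-))$. So it suffices to prove that $H^i(\Psi(M)) = 0$ for $i \neq 0$ whenever $M$ is regular and $G(O)$-integrable. The main tool is Corollary \ref{c:injection}: for such $M$ (nonzero) there is an injection $M^{G(O)}\otimes_\fz \bV \hookrightarrow M$, and more usefully I would argue that $M$ admits a (possibly infinite) filtration, or is built by colimits, out of copies of $\bV$ twisted by the $\fz$-action. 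Concretely: since $\Psi$ commutes with filtered colimits (it is computed by an explicit chain complex, and tensoring with $\chi$ and taking the standard complex commutes with colimits), and since $H^\bullet(\Psi(-))$ is a cohomological functor, it is enough to handle $M$ of the form $\bV \otimes_\fz N$ for $N$ a $\fz$-module; and by Lemma \ref{l:res} plus colimits one reduces $N$ to a free module, i.e. to $M = \bV$ itself (up to direct sums and shifts of the $\Aut$-grading, which $\Psi$ respects). For $M = \bV$, Theorem \ref{t:ds} gives exactly $H^i(\Psi(\bV)) = 0$ for $i \neq 0$ and $H^0(\Psi(\bV)) = \fz$. Thus $\Psi$ is concentrated in degree zero on all of $\fgh\modules_{reg}^{G(O)}$, and the long exact sequence collapses to short exact sequences: $\Psi$ is exact.

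For faithfulness, suppose $\Psi(M) = 0$ for $M \in \fgh\modules_{reg}^{G(O)}$; I want $M = 0$. If $M \neq 0$, then by Proposition \ref{p:generator} we have $M^{G(O)} \neq 0$, so there is a nonzero map $\bV \to M$ (namely $M^{G(O)} = \Hom_{\fgh\modules}(\bV,M)$), and by Corollary \ref{c:injection} we may even take $\bV \to M$ to be injective after tensoring up, giving in particular a nonzero submodule of the form $M^{G(O)}\otimes_\fz\bV$. Now apply $\Psi$: by exactness (just established), $\Psi$ takes the injection $M^{G(O)}\otimes_\fz\bV \hookrightarrow M$ to an injection $\Psi(M^{G(O)}\otimes_\fz\bV) \hookrightarrow \Psi(M) = 0$. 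But $\Psi(M^{G(O)}\otimes_\fz \bV) = M^{G(O)}\otimes_\fz \Psi(\bV) = M^{G(O)}\otimes_\fz \fz = M^{G(O)}$ using that $\Psi$ commutes with the colimits computing $-\otimes_\fz\bV$ and using Theorem \ref{t:ds} for the value on $\bV$. Since $M^{G(O)} \neq 0$, this is a contradiction. Hence $M = 0$, and $\Psi$ is faithful. (Equivalently: $\Psi$ is exact and $\Psi(M) \supseteq M^{G(O)}$ always contains the nonzero invariants of a nonzero object, so $\Psi$ detects zero.)

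The main obstacle is the reduction step in the exactness argument: verifying carefully that $H^\bullet(\Psi(-))$ commutes with the relevant filtered colimits and that an arbitrary $M \in \fgh\modules_{reg}^{G(O)}$ can be resolved or approximated by objects induced from $\fz$-modules so as to propagate the vanishing $H^{\neq 0}(\Psi(\bV)) = 0$ to all of them. One has to be slightly careful because $M$ need not literally be of the form $\bV \otimes_\fz N$; the clean way is to note that $M^{G(O)}$ is a $\fz$-module, form $M^{G(O)}\otimes_\fz\bV \to M$, and analyze kernel and cokernel using Corollaries \ref{c:invariants} and \ref{c:injection} — but both of those require knowing more (projectivity/flatness of $M^{G(O)}$) than is given for a general $M$. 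So the honest route to exactness is: $\Psi$ is cohomological, so I only need $H^{\neq 0}(\Psi(M)) = 0$ for \emph{all} $M$; and this last vanishing should be extracted directly, e.g. from the Wakimoto-module resolution used in the proof of Theorem \ref{t:ds} applied with parameters, or cited from \cite{fg2}/\cite{bzf}. Pinning down exactly which input gives "$\Psi$ is exact on the whole category, not just exact on $\bV$" is where the real content sits; everything else is formal manipulation of the long exact sequence and Proposition \ref{p:generator}.
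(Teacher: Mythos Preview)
Your faithfulness argument matches the paper's. For exactness, you correctly reduce to showing $H^{\neq 0}(\Psi(M)) = 0$ for all $M \in \fgh\modules_{reg}^{G(O)}$, and you correctly observe this holds for $M = N \otimes_\fz \bV$ by $\fz$-linearity and Theorem \ref{t:ds}. But you leave the reduction from general $M$ to this case unresolved, and the approaches you sketch (filtered colimits of induced modules, resolving $N$ by free modules) do not close it as stated: an arbitrary $M$ is not visibly a filtered colimit of objects of the form $N\otimes_\fz\bV$.

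The missing step, which the paper supplies, is a Zorn's lemma (equivalently, transfinite filtration) argument. Since $\Psi$ commutes with colimits bounded uniformly from below and the $t$-structure is compatible with filtered colimits, there exists a submodule $M_0 \subseteq M$ maximal among those with $\Psi(M_0)$ concentrated in degree $0$. If $M_0 \neq M$, apply Corollary \ref{c:injection} to the quotient $M/M_0$: it yields a nonzero submodule of the form $N \otimes_\fz \bV \hookrightarrow M/M_0$. The preimage $M_0'$ of this submodule in $M$ sits in a short exact sequence $0 \to M_0 \to M_0' \to N \otimes_\fz \bV \to 0$, and the long exact $\Psi$-sequence then forces $\Psi(M_0')$ to be concentrated in degree $0$, contradicting maximality of $M_0$. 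No external input from \cite{bzf} or \cite{fg2} is required beyond what is already assembled in Section \ref{s:rep-theory}.

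One small correction: neither Corollary \ref{c:invariants} nor Corollary \ref{c:injection} imposes any flatness or projectivity hypothesis on $M^{G(O)}$; you are likely thinking of Corollary \ref{c:vac-ext}, which does. The point is that you do not need $M^{G(O)}\otimes_\fz \bV \to M$ to be an \emph{isomorphism} --- only an injection, after which you iterate on the cokernel. That iteration is exactly what the Zorn argument packages.
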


\begin{proof}

Let $M$ be a regular $G(O)$-integrable $\fgh$-module. We need to show that $\Psi(M)$ is concentrated in degree $0$. First, note that by
$\fz$-linearity, this is true by Theorem \ref{t:ds} for modules of the form $N\otimes_{\fz}\bV$.

Since $\Psi$ commutes with colimits bounded uniformly from below (by definition of semi-infinite cohomology) and because the $t$-structure on $\fgh\modules$ is compatible with filtered
colimits, Zorn's lemma implies that there exists a submodule $M_0$ of $M$ maximal with respect to inclusion among submodules $M'$ of $M$ with the property that $\Psi(M')$ is concentrated in
cohomological degree $0$. 

But if $M_0$ is not equal to $M$, then
$M/M_0$ has a non-zero submodule of the form $N\otimes_{\fz}\bV$ by Corollary \ref{c:injection}, and $\Psi(N\otimes_{\fz}\bV)=N$
contradicts the maximality of $M_0$.

Now let us deduce that $\Psi$ is faithful. We have $M^{G(O)}\otimes_{\fz}\bV\into M$ by Corollary \ref{c:injection}. Applying $\Psi$, we see that $M^{G(O)}\lto \Psi(M)$ is also injective. 
The statement of the proposition then follows because $M^{G(O)}\neq 0$ by Proposition \ref{p:generator}.

\end{proof}

\subsection{}\label{ss:finiteness} Let $V\in\Rep(\ld{G})$ be a finite-dimensional representation. Consider the corresponding 
critically twisted $D$-module $\fF_V$ on $\Gr_G$. Note that $\fF_V$ has finite-dimensional support, so $R\Ga(\Gr_G,\fF_V)$ lives in
bounded cohomological degrees. 

\begin{fthm}\label{t:finiteness}
The object $\Psi(R\Ga(\Gr_G,\fF_V))\in D^b(\fz\modules)$ is perfect, i.e., quasi-isomorphic to a bounded complex of finite rank projective $\fz$-modules.
\end{fthm}

This will be proved in Section \ref{s:finiteness}.

\subsection{} Here is an immediate corollary of the Finiteness Theorem:

\begin{cor}
Let $A$ be a
$\fz$-algebra which is coherent as a ring (e.g., this is satisfied for $A=\fz$ and when $A$ is Noetherian).
Then for all $i$, the $A$-module: 
\[
H^i(\Psi(R\Ga(\Gr_G,\fF_V))\underset{\fz}{\overset{L}{\otimes}}A)=\Psi(H^i(R\Ga(\Gr_G,\fF_V)\underset{\fz}{\overset{L}{\otimes}}A))
\]
\noindent is finitely presented.
\end{cor}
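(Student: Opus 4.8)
Write $E:=R\Ga(\Gr_G,\fF_V)$, which by the discussion preceding the Finiteness Theorem lies in $D^b(\fgh\modules_{reg}^{G(O)})$; represent it by a bounded complex $M\dot$ with terms in $\fgh\modules_{reg}^{G(O)}$. The plan is to reduce everything to the Finiteness Theorem, the exactness of $\Psi$ (Proposition~\ref{p:psi-exact}), and the fact that finitely presented modules over a coherent ring form an abelian subcategory. The first step is to show that $\Psi$ commutes with $(-)\overset{L}{\otimes}_{\fz}A$ on such objects. Choose a resolution $F\dot\to A$ of $A$ by free $\fz$-modules (not necessarily bounded below); then $E\overset{L}{\otimes}_{\fz}A$ is modeled by $\Tot(M\dot\otimes_{\fz}F\dot)$, each term of which is a direct sum of copies of the finitely many terms of $M\dot$, hence an object of $\fgh\modules_{reg}^{G(O)}$, and, $E$ being bounded, each total degree is a \emph{finite} direct sum. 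As recalled in the proof of Proposition~\ref{p:psi-exact}, $\Psi$ is exact and commutes with colimits uniformly bounded below (in particular with arbitrary direct sums and with these totalizations), so applying it termwise yields a canonical identification
\[
\Psi\big(E\overset{L}{\otimes}_{\fz}A\big)\;\simeq\;\Psi(E)\overset{L}{\otimes}_{\fz}A
\]
in $D(A\modules)$; here $\Psi$ denotes the evident lift $\fgh\modules_A^{G(O)}\to A\modules$, which exists by functoriality of $\Psi$ together with the fact that $A$ acts through the quotient $\fZ\onto\fz$.

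By the Finiteness Theorem, $\Psi(E)\simeq P\dot$ for a bounded complex $P\dot$ of finite rank projective $\fz$-modules, so $\Psi(E)\overset{L}{\otimes}_{\fz}A\simeq P\dot\otimes_{\fz}A$ is a bounded complex of finite rank projective $A$-modules. Using again that $\Psi$ is exact (hence commutes with passing to cohomology of a complex of objects of $\fgh\modules_A^{G(O)}$), we obtain for every $i$
\[
\Psi\big(H^i(E\overset{L}{\otimes}_{\fz}A)\big)\;\simeq\;H^i\big(\Psi(E)\overset{L}{\otimes}_{\fz}A\big)\;\simeq\;H^i(P\dot\otimes_{\fz}A),
\]
which is the asserted identification of $A$-modules. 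Finally, $H^i(P\dot\otimes_{\fz}A)$ is the cohomology of a bounded complex of finitely presented (indeed finite rank projective) $A$-modules; since $A$ is coherent, the full subcategory of finitely presented $A$-modules is abelian (closed under kernels and cokernels), so this cohomology is again finitely presented. This covers $A=\fz$ (a polynomial algebra on countably many generators is coherent) as well as any Noetherian $A$.

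The step I expect to require the most care is the first, namely that $\Psi$ genuinely commutes with $(-)\overset{L}{\otimes}_{\fz}A$. The only delicate point is that the free resolution $F\dot\to A$ need not be bounded below, but boundedness of $E=R\Ga(\Gr_G,\fF_V)$ forces every total degree of $\Tot(M\dot\otimes_{\fz}F\dot)$ to be a finite direct sum of objects of $\fgh\modules_{reg}^{G(O)}$, so no convergence issue arises and the two properties of $\Psi$ already invoked in the proof of Proposition~\ref{p:psi-exact} --- exactness and commutation with uniformly-bounded-below colimits --- are exactly what is needed. The remaining bookkeeping, in particular that all module structures in sight are the $A$-module structures induced by functoriality of $\Psi$, is routine.
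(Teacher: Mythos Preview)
Your proof is correct and is exactly the unwinding the paper has in mind: the corollary is stated there as an ``immediate corollary of the Finiteness Theorem'' with no further argument, and what you have written is the natural way to make that immediacy explicit---namely, use $\fz$-linearity and $t$-exactness of $\Psi$ to commute it past $\overset{L}{\otimes}_{\fz} A$ and past taking cohomology, then invoke perfectness of $\Psi(R\Ga(\Gr_G,\fF_V))$ together with coherence of $A$. One small wording issue: when you say ``each total degree is a \emph{finite} direct sum,'' you mean a finite sum of terms $M^p\otimes_{\fz}F^q$, each of which may itself be an \emph{infinite} direct sum of copies of $M^p$; this does not affect the argument (since $\Psi$ commutes with arbitrary direct sums of objects in the heart), but as written it could be misread.
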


\section{Proof of the main theorem}\label{s:vanishing}

\subsection{} The goal for this section is to explain how to deduce the following result from the Finiteness Theorem:

\begin{thm}\label{t:main}
For $V\in\Rep(\ld{G})$, $H^i(\Gr_G,\fF_V)=0$ for $i>0$ and $\Ga(\Gr_G,\fF_V)^{G(O)}$ is a finitely generated projective $\fz$-module.
\end{thm}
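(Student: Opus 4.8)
The plan is to bootstrap from the Finiteness Theorem using the exactness and faithfulness of the Drinfeld-Sokolov functor $\Psi$ on regular $G(O)$-integrable modules (Proposition \ref{p:psi-exact}), together with the parametrized rigidity statement Corollary \ref{c:vac-ext-param}. First I would record that $R\Ga(\Gr_G,\fF_V)$ is a complex of objects in $\fgh\modules_{reg}^{G(O)}$ living in bounded cohomological degrees (since $\fF_V$ has finite-dimensional support), so it makes sense to apply $\Psi$. Since $\Psi$ is exact, $\Psi(R\Ga(\Gr_G,\fF_V))$ computes the complex with terms $\Psi(H^i(\Gr_G,\fF_V))$, and the Finiteness Theorem says this complex is perfect over $\fz$.

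The key step is to deduce the cohomological concentration and projectivity. Here is how I would argue. It is known (for instance from the sketch in the footnote attached to the global sections functor) that $H^0(\Gr_G,\fF_V)$ is always nonzero when $V\neq 0$ — indeed $\Ga(\Gr_G,\delta_e)=\bV$, and more generally the $\fF_V$ are built from the $\IC$-sheaves of the strata. So by faithfulness and exactness of $\Psi$, and the fact that $\Psi$ kills nothing nonzero in $\fgh\modules_{reg}^{G(O)}$, the concentration of $\Psi(R\Ga(\Gr_G,\fF_V))$ in a single degree is equivalent to the concentration of $R\Ga(\Gr_G,\fF_V)$ in a single degree. To get the former, I would use that $\Psi(R\Ga)$ is perfect together with an $\Aut^+$-equivariance/grading argument: the operator $L_0$ acts on everything in sight with eigenvalues bounded below, and the lowest-degree cohomology $H^0$ already contains (a twist of) $\fz$; a perfect complex whose $H^0$ has the right "size" and whose Euler characteristic matches is forced to be concentrated. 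More robustly, one observes that $\Psi(R\Ga(\Gr_G,\fF_V))$ is a perfect complex of $\fz$-modules, hence after base change to a residue field $\fz/\fm$ its cohomology has bounded total dimension; combined with the fact that $H^0$ cannot vanish on any such fiber (since $\Psi$ is faithful and the map $M^{G(O)}\otimes_{\fz}\bV\to M$ is injective by Corollary \ref{c:injection}, and $H^0(\Gr_G,\fF_V)^{G(O)}\neq 0$ fiberwise) and a semicontinuity argument, one concludes the higher cohomology vanishes after arbitrary base change, hence $H^i(\Gr_G,\fF_V)=0$ for $i>0$.

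Once $R\Ga(\Gr_G,\fF_V)=\Ga(\Gr_G,\fF_V)=:M_V$ is concentrated in degree zero, perfectness of $\Psi(M_V)$ means $\Psi(M_V)$ is a finite rank projective $\fz$-module. By exactness of $\Psi$, $\Psi$ takes the injection $M_V^{G(O)}\otimes_{\fz}\bV\hookrightarrow M_V$ of Corollary \ref{c:injection} to the injection $M_V^{G(O)}\hookrightarrow\Psi(M_V)$ (using $\Psi(N\otimes_{\fz}\bV)=N$ from Theorem \ref{t:ds}). To upgrade this to an isomorphism $M_V^{G(O)}\otimes_{\fz}\bV\isom M_V$ I would invoke Corollary \ref{c:vac-ext-param}: it suffices to show $M_V^{G(O)}$ is flat over $\fz$. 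Since $\Psi(M_V)$ is projective and the cokernel of $M_V^{G(O)}\hookrightarrow\Psi(M_V)$ is itself of the form $\Psi(C)$ with $C$ the cokernel of $M_V^{G(O)}\otimes_{\fz}\bV\to M_V$, an argument with the resolution of $M_V^{G(O)}$ coming from Lemma \ref{l:res} plus the Finiteness Theorem applied with parameters (the Corollary to the Finiteness Theorem, base-changing along $\fz\to A$) shows $M_V^{G(O)}$ is a flat, finitely presented $\fz$-module, hence finitely generated projective. Then Corollary \ref{c:vac-ext-param} applies and gives $\Psi(M_V)\cong M_V^{G(O)}$ is projective of finite rank, which is the assertion.

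The main obstacle I anticipate is the passage from "$\Psi(R\Ga(\Gr_G,\fF_V))$ is perfect" to "$R\Ga(\Gr_G,\fF_V)$ is concentrated in degree zero and $\Ga(\Gr_G,\fF_V)^{G(O)}$ is projective" — that is, transferring the finiteness/flatness information through $\Psi$ in a way that controls every cohomological degree simultaneously, rather than just the top or bottom one. Exactness and faithfulness of $\Psi$ handle nonvanishing, but proving that the higher $H^i(\Gr_G,\fF_V)$ actually vanish seems to genuinely require the perfectness (boundedness of fiber dimensions) together with a careful fiberwise nonvanishing-of-$H^0$ input, and marrying these two is the delicate point.
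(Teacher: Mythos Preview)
Your proposal has a genuine gap precisely where you anticipate it: the passage from perfectness of $\Psi(R\Ga(\Gr_G,\fF_V))$ to vanishing of the higher $H^i(\Gr_G,\fF_V)$. The semicontinuity argument you sketch cannot work on its own. Knowing that a perfect complex over $\fz$ has bounded fiber dimensions and that $H^0$ is nonvanishing on every fiber does not force the higher cohomology to vanish; there is no numerical obstruction to having both $H^0$ and $H^1$ nonzero everywhere. What is missing is the \emph{convolution adjunction}. The paper's argument runs, in its simplest incarnation (the base case, localizing at the generic point $\fz_0$): if $k$ is the top nonvanishing degree of $V*\bV_0$, then over the field $\fz_0$ one has $H^k(V*\bV_0)\simeq N\otimes_{\fz_0}\bV_0$ for some $N$, so the map $V*\bV_0\to H^k(V*\bV_0)[-k]$ is nonzero and adjoints to a nonzero map $\bV_0\to V^\vee*H^k(V*\bV_0)[-k]$; but the target sits in degrees $\geq k$, forcing $k=0$. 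This monoidal self-duality trick is the essential mechanism, and nothing in your outline supplies it. Away from the generic point one cannot immediately say $H^k$ is a sum of copies of $\bV$, so the paper runs an induction on the height of the prime $\fp$, using regularity of $\fz_\fp$, Koszul resolutions, and a more elaborate version of the same adjunction argument (including a Grothendieck spectral sequence in the convolution variable) to push the vanishing through.

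There is a second gap in your treatment of projectivity of $M_V^{G(O)}$. You observe $M_V^{G(O)}\hookrightarrow\Psi(M_V)$ with projective target, but a submodule of a projective module need not be projective, and you have no a priori finite presentation of $M_V^{G(O)}$ to invoke Lemma~\ref{l:res}. The paper instead first proves $\Psi(V*\bV)$ is projective (this follows from the Finiteness Theorem plus the already-established vanishing at finite-height primes via Lemma~\ref{l:height}), then separately checks at height-one primes that $(V*\bV_\fp)^{G(O)}$ is torsion-free, hence flat over the DVR $\fz_\fp$; there Corollary~\ref{c:vac-ext-param} applies and identifies $(V*\bV_\fp)^{G(O)}\simeq\Psi(V*\bV_\fp)$. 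These height-one isomorphisms are then glued by a UFD argument (Lemma~\ref{l:map-extn}) to produce a global map $V*\bV\to\Psi(V*\bV)\otimes_\fz\bV$, which is finally shown to be an isomorphism by checking it after $\Psi$. Your proposed route through Corollary~\ref{c:vac-ext-param} directly is circular: that corollary needs flatness of $M_V^{G(O)}$ as input, which is what you are trying to prove.
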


The proof will occupy the entirety of this section. 

\begin{rem} 

This theorem was proved originally in \cite{bd} 5.4.8. However, their proof relied on the Feigin-Frenkel isomorphism. More precisely, in the proof of their Lemma 6.2.2, they appeal to 
the surjectivity of the anchor map for the algebroid coming from the chiral Poisson structure on (the VOA form of) $\fz$. They deduce the surjectivity of this anchor map 
from the compatibility of the Feigin-Frenkel isomorphism with chiral Poisson structures, since the corresponding algebroid on the opers side is much more concrete. It appears infeasible to give a 
direct proof of this surjectivity of this anchor map.

\end{rem}

\subsection{}\label{ss:main-start} We will need the following result from commutative algebra:

\begin{prop}\label{p:polynomials}
Let $A$ be a polynomial algebra on countably many generators over a Noetherian ring $R$. Then a prime ideal $\fp$ of
$A$ has finite height if and only if it is finitely generated.
\end{prop}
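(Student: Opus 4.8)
The plan is to prove the two implications separately, with the easy direction first. If $\fp$ is finitely generated, say $\fp = (f_1,\dots,f_n)$, then all of the $f_i$ already involve only finitely many of the variables; writing $A = A_0[\{x_j\}_{j \in J'}]$ where $A_0 = R[x_1,\dots,x_m]$ is a polynomial ring on the finitely many variables occurring in the $f_i$ and $J'$ is the (infinite) set of remaining variables, we have $\fp = \fp_0 A$ with $\fp_0 = (f_1,\dots,f_n) \subset A_0$ a prime. Then $A/\fp = (A_0/\fp_0)[\{x_j\}]$, and one checks $\operatorname{ht}(\fp) = \operatorname{ht}(\fp_0) \le \dim A_0 < \infty$, using that a polynomial extension by any set of variables of an integral domain $D$ has the property that extension and contraction give a height-preserving bijection between primes of $D$ and primes of $D[\{x_j\}]$ lying over them (the key point being that a chain of primes below $\fp$, being finitely generated in bounded length, lives in a finite sub-polynomial-ring). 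This direction is essentially bookkeeping.

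The substantive direction is: finite height implies finitely generated. Suppose $\operatorname{ht}(\fp) = h < \infty$. The idea is to show $\fp$ is ``already defined over finitely many variables'': that is, there is a finite subset $S$ of the generators with $A' := R[x_i : i \in S]$, $\fp' := \fp \cap A'$, such that $\fp = \fp' A$. Granting this, $\fp'$ is a prime of a Noetherian ring, hence finitely generated, hence so is $\fp' A = \fp$. To produce $S$: first choose finitely many variables so that, after renaming, $\operatorname{ht}(\fp \cap A_0) = h$ for the corresponding sub-polynomial-ring $A_0$ (possible since height can only be witnessed by finite chains, each living in some finite sub-ring, and $h$ is finite so one can absorb a maximal chain). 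Then for any further variable $y$, I would argue $\fp \cap A_0[y]$ has height $h$ as well (it lies over $\fp \cap A_0$ and cannot have strictly larger height, else pulling back a chain to $A$ would contradict $\operatorname{ht}\fp = h$; and it cannot be a proper specialization decreasing the transcendence picture); by the dimension formula / going-down for the flat extension $A_0 \hookrightarrow A_0[y]$, a height-$h$ prime of $A_0[y]$ lying over a height-$h$ prime of $A_0$ must be extended from $A_0$, i.e.\ $\fp \cap A_0[y] = (\fp \cap A_0)A_0[y]$. Iterating over all remaining variables (and passing to the filtered colimit, since $A = \operatorname{colim}$ of its finite sub-polynomial-rings and taking the prime $\fp \cap -$ commutes with this colimit) gives $\fp = (\fp \cap A_0)A$, as desired.

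The main obstacle I expect is the step ``$\fp \cap A_0[y]$ is extended from $A_0$ once heights agree.'' The cleanest formulation uses that for a flat extension $D \to D[y]$ with $D$ Noetherian and a prime $\mathfrak q \subset D$, the primes of $D[y]$ contracting to $\mathfrak q$ are: $\mathfrak q D[y]$ (height $= \operatorname{ht}\mathfrak q$) and the primes strictly containing it (height $= \operatorname{ht}\mathfrak q + 1$), coming from maximal ideals of the one-variable polynomial ring $\kappa(\mathfrak q)[y]$ over the residue field. So a contracting prime of height exactly $\operatorname{ht}\mathfrak q$ is forced to be $\mathfrak q D[y]$. The care needed is only in making sure the contraction $\fp \cap A_0$ genuinely has height $h$ (not less) after the initial choice of variables — this is where one invokes that a saturated chain $\fp_0 \subsetneq \fp_1 \subsetneq \cdots \subsetneq \fp_h = \fp$ in $A$ involves only finitely many variables among generators of the $\fp_i$, so restricting to that finite sub-polynomial-ring $A_0$ keeps the chain intact and of length $h$, while no longer chain in $A_0$ can exist (it would lift to $A$).
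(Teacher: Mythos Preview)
Your outline is sound and takes a genuinely different route from the paper. For ``finitely generated $\Rightarrow$ finite height,'' the paper argues directly that $\on{ht}(\fp)$ is bounded by the number $n$ of generators: given a chain of $n+2$ primes below $\fp$, contract it to each $R[X_1,\ldots,X_r]$; Krull's height theorem forces a collapse at some index $i(r)$, and since the sets $\{i:\fp_i^{(r)}=\fp_{i+1}^{(r)}\}$ form a decreasing sequence of nonempty subsets of $\{0,\ldots,n\}$ as $r$ grows, a single $i$ works for all $r$, giving a collapse in $A$. Your reduction to the Noetherian sub-ring $A_0$ is more conceptual but loses this explicit bound; also, replace ``$\leq\dim A_0<\infty$'' by ``$<\infty$ since $A_0$ is Noetherian'' (a Noetherian ring can have infinite Krull dimension, but every prime in one has finite height). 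For ``finite height $\Rightarrow$ finitely generated,'' the paper simply cites Gilmer--Heinzer \cite{gilmer-heinzer}; your self-contained argument, using the dimension formula for $A_0\hookrightarrow A_0[y]$ to force $\fp\cap A_0[y]=(\fp\cap A_0)A_0[y]$ once the heights agree, is essentially their method and is a genuine addition.

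Two slips to clean up. First, the phrase ``involves only finitely many variables among generators of the $\fp_i$'' is circular: you do not yet know the $\fp_i$ are finitely generated. What you actually need, and what suffices, is that finitely many \emph{witnesses} $f_i\in\fp_{i+1}\setminus\fp_i$ to the strict inclusions live in some finite sub-polynomial ring $A_0$; contracting the chain to $A_0$ then remains strict of length $h$, while extending any longer chain in $A_0$ back to $A$ (via $\fq\mapsto\fq A$) would contradict $\on{ht}(\fp)=h$. Second, the height-preservation fact you invoke for $D\hookrightarrow D[y]$ (that a prime over $\fq$ of height exactly $\on{ht}(\fq)$ must be $\fq D[y]$) requires $D$ Noetherian for the dimension formula, not merely $D$ an integral domain; in your application $D=A_0$ is Noetherian, so this is only a matter of stating the hypothesis correctly.
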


\begin{proof}

That finite height of a prime ideal in $A$ implies finite generation is proved in \cite{gilmer-heinzer}.

For the converse, we claim that the height of $\fp$ is bounded by the number of generators of the ideal. Suppose $\fp$ is generated by $n$ elements. Expanding $R$ as necessary,
we may assume that all the generators lie in $R$. Let $\fp=\fp_0\supset \fp_1\supset \fp_2\supset \ldots \supset \fp_{n+1}$ be a chain of prime ideals in $A=R[X_1,X_2,\ldots]$. 
Then we need to show that $\fp_i=\fp_{i+1}$ for some $i$.

For each $r\in\bZ^{\geq 0}$, let $\fp_i^{(r)}:=\fp_i\cap R[X_1,\ldots,X_r]$. Since $R[X_1,\ldots,X_r]$ is a Noetherian ring and since $\fp_0^{(r)}$ is generated by $n$ elements
for all $r$, Krull's theorem tells us that for each $r$ there exists $i$ such that $\fp_i^{(r)}=\fp_{i+1}^{(r)}$. We deduce that there exists $i$ such that
$\fp_i^{(r)}=\fp_{i+1}^{(r)}$ for \emph{all} $r$. This implies that $\fp_i=\fp_{i+1}$ since $\underset{r}{\cup} R[X_1,\ldots,X_r]=A$.

\end{proof}

\subsection{}

Proposition \ref{p:polynomials} in particular implies the following:

\begin{cor}\label{c:noeth}
Let $A$ be a polyonomial algebra on countably many generators over $\bC$. Then the localization $A_{\fp}$ of $A$ at any finite height prime ideal $\fp$ is 
a regular local ring (in particular Noetherian).
\end{cor}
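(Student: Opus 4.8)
The statement is Corollary~\ref{c:noeth}: if $A$ is a polynomial algebra over $\bC$ on countably many generators and $\fp$ is a prime of finite height, then $A_\fp$ is a regular local ring. The plan is to combine Proposition~\ref{p:polynomials} with the standard fact that a local ring is regular whenever it is the localization of a Noetherian ring at a prime and the associated graded ring (or equivalently the cotangent space dimension) matches the Krull dimension. First I would note that by Proposition~\ref{p:polynomials} (applied with $R=\bC$), the finite-height prime $\fp$ is finitely generated, say $\fp=(f_1,\dots,f_n)$. Enlarging the set of variables if necessary, I would arrange that all the $f_i$ lie in a polynomial subalgebra $A_0=\bC[X_1,\dots,X_r]$ on finitely many variables, and set $\fp_0:=\fp\cap A_0$, which is a prime of $A_0$ containing $(f_1,\dots,f_n)$; then $\fp=\fp_0\cdot A$ (the extension is prime because $A=A_0[X_{r+1},X_{r+2},\dots]$ and $A/\fp_0 A=(A_0/\fp_0)[X_{r+1},\dots]$ is a domain).

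Next I would identify the localization. Write $A'=A_0[X_{r+1},X_{r+2},\dots]$; then $A_\fp$ is the localization of $A'$ at the prime generated by $\fp_0$. Localizing first at the multiplicative set $A_0\setminus\fp_0$ gives $(A_0)_{\fp_0}[X_{r+1},X_{r+2},\dots]$, and then $A_\fp$ is a further localization of this polynomial ring (on countably many variables) over the Noetherian local ring $(A_0)_{\fp_0}$ at the maximal ideal $\fm$ generated by the maximal ideal of $(A_0)_{\fp_0}$. The key point is that this last localization is a regular local ring: $(A_0)_{\fp_0}$ is regular of dimension $d:=\on{ht}(\fp_0)$ since $A_0$ is a polynomial ring over $\bC$, its maximal ideal is generated by $d$ elements, these same $d$ elements generate $\fm$ in the polynomial extension (the new variables $X_{r+1},\dots$ all map into $\fm$ but are not needed as generators: $\fm=\fm_{(A_0)_{\fp_0}}\cdot B + (X_{r+1},X_{r+2},\dots)$, and I must be careful here — the $X_j$ \emph{do} lie in $\fm$, so I instead localize at the prime $\mathfrak{q}$ generated by $\fm_{(A_0)_{\fp_0}}$ alone, i.e. the contraction of $\fp$, which is exactly $\fp_0\cdot A$ as arranged). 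Concretely: $A_\fp = \big((A_0)_{\fp_0}[X_{r+1},\dots]\big)_{\mathfrak{q}}$ where $\mathfrak{q}=\fm_{(A_0)_{\fp_0}}[X_{r+1},\dots]$, and one checks directly that this ring is Noetherian (it is finitely generated, hence one can again use Proposition~\ref{p:polynomials} or simply note $\mathfrak{q}$ is generated by the $d$ generators of $\fm_{(A_0)_{\fp_0}}$) and that its maximal ideal is generated by a regular sequence of length $d$, so it is regular local of dimension $d$.

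Alternatively, and perhaps more cleanly, I would avoid tracking the new variables by arguing as follows: $A_\fp$ is Noetherian because $\fp$ is finitely generated and $A_\fp$ is the localization of the finitely generated (hence Noetherian by Hilbert's basis theorem) $\bC$-algebra $A_0$ — no, this is false since $\fp$ need not be generated inside $A_0$; so I retain the two-step localization above. The cleanest route: $A_\fp\cong C_{\mathfrak q}$ with $C=(A_0)_{\fp_0}[X_{r+1},X_{r+2},\dots]$ and $\mathfrak q$ the extension of the maximal ideal of the Noetherian regular local ring $(A_0)_{\fp_0}$; since $C/\mathfrak q\cong\on{Frac}(A_0/\fp_0)$ is a field, $C_{\mathfrak q}$ has maximal ideal generated by the image of a regular system of parameters of $(A_0)_{\fp_0}$, and its dimension equals that of $(A_0)_{\fp_0}$ (the fiber over $\fp_0$ is a localization of a polynomial ring at a minimal prime, hence zero-dimensional); therefore $C_{\mathfrak q}$ is regular local.

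\textbf{Main obstacle.} The essential content is entirely supplied by Proposition~\ref{p:polynomials}: without it one has no handle on Noetherianity of $A_\fp$. Given that proposition, the only real care needed is the bookkeeping that reduces $\fp$ to a prime extended from a finitely generated polynomial subalgebra, and the verification that passing from $(A_0)_{\fp_0}$ to the localization of a polynomial ring over it (at the extended prime, which is \emph{unramified} in the relevant sense because the residue extension is trivial) preserves regularity and dimension. This is standard commutative algebra (localization of a regular ring at a prime is regular, together with the behavior of dimension and parameter systems under the polynomial extension $R\rightsquigarrow R[X_i]_i$ localized back at $\fm_R$), so I expect no genuine difficulty — the proof is essentially a two-line deduction from Proposition~\ref{p:polynomials} plus the regularity of localizations of polynomial rings over $\bC$.
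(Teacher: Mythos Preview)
Your argument is correct, but it takes a different route from the paper's. The paper first proves Noetherianity of $A_\fp$ directly via Cohen's theorem: every prime of $A_\fp$ corresponds to a prime of $A$ contained in $\fp$, hence of finite height, hence finitely generated by Proposition~\ref{p:polynomials}. For regularity the paper then argues homologically: since $\fp$ is finitely generated, $A/\fp$ is finitely presented, so by Lemma~\ref{l:res} it has a finite resolution by finite rank free $A$-modules; localizing at $\fp$ gives a finite free resolution of the residue field over $A_\fp$, which forces regularity. Your approach instead makes the reduction to finitely many variables explicit, identifying $A_\fp$ with $C_\fq$ for $C=(A_0)_{\fp_0}[X_{r+1},X_{r+2},\ldots]$ and $\fq=\fm_{(A_0)_{\fp_0}}\cdot C$, and then reads off regularity from the regular system of parameters of the classical regular local ring $(A_0)_{\fp_0}$. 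Both are fine; the paper's route is a bit slicker (no bookkeeping of variables, and it reuses Lemma~\ref{l:res}), while yours is more concrete and avoids the homological characterization of regularity. One small point worth tightening: your justification that $C_\fq$ is Noetherian is terse --- noting that $\fq$ itself is generated by $d$ elements does not suffice, and what you really need is Cohen's theorem together with Proposition~\ref{p:polynomials} applied over the Noetherian base $(A_0)_{\fp_0}$ to see that \emph{every} prime contained in $\fq$ is finitely generated; this is precisely the paper's Noetherianity argument transplanted to $C$.
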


\begin{proof}

Recall Cohen's theorem that a commutative ring is Noetherian if and only if every \emph{prime} ideal is finitely generated. But this follows from Proposition \ref{p:polynomials}.

Now regularity follows from the corresponding statement in finitely generated polynomial algebras. For example, one can proceed as follows. Let $\fp$ be a prime ideal of finite height, which by 
Proposition \ref{p:polynomials} is finitely generated. As $A/\fp$ is finitely presented, Lemma \ref{l:res} implies that it admits a finite resolution by finite rank free $\fz$-modules. Localizing at
$\fp$, we deduce the same result for the residue field $A_{\fp}/\fp\cdot A_{\fp}$, which immediately implies regularity.

\end{proof}

\subsection{} We will prove the following statement by induction on $n$:

\begin{description}

\item[$(*)_n$] For every $\fp\in\Spec(\fz)$ of height $n<\infty$ and for every finite-dimensional representation $V$ of $\Rep(\ld{G})$, $H^i(\Gr_G,\fF_V)_{\fp}=0$ for 
$i>0$ and $\Psi(H^0(\Gr_G,\fF_V)_{\fp})$ is a finite rank projective (equivalently, free) $\fz_{\fp}$-module.

\end{description}

The proof occupies Sections \ref{ss:induction-start}-\ref{ss:induction-finish}.

\subsection{}\label{ss:induction-start} We will appeal to the convolution format of \cite{bd} Section 7, so we rewrite
$R\Ga(\Gr_G,\fF_V)$ as $V*\bV$ (considered as an object of the appropriate derived category of regular $G(O)$-integrable $\fgh$-modules). We will also use the convolution format
``with extra parameters" as in \emph{loc. cit.} 7.9.8 (see also \emph{loc. cit.} 5.4.9).

Let $\bV_{\fp}=\bV\otimes_{\fz}\fz_{\fp}$ be the $\fz_{\fp}$-linear version of $\bV$. 
Then, understanding $V*\bV_{\fp}$ as in \emph{loc. cit.} (it is considered as an object of the derived category of $\fz_{\fp}$-linear $G(O)$-integrable $\fgh$-modules), we have 
$\Psi(H^k(V*\bV_{\fp}))\simeq \Psi(H^k(V*\bV))_{\fp}$ by $\fz$-linearity of the functors involved.

\subsection{} We begin in Section \ref{ss:base-case} with the base case of the claim $(*)_n$ where $n=0$ (i.e., $\fp=0$). 

The argument in this case was taught to us by Gaitsgory in discussions surrounding his 
2009-10 seminar (and was the starting point for the present work). A version of this argument
was found by Beilinson and Drinfeld, but was not written in the unfinished work \cite{bd} (however, a variant does appear as Lemma 5.5.2). The proof of \cite{fusion} Theorem 4.14 
is another instance of this argument.

\subsection{}\label{ss:base-case} Since $\fz_{0}=\fz_{\fp}$ is a field, $H^i(V*\bV_{0})^{G(O)}$ is free over $\fz_{0}$ for all $i$. By Corollary \ref{c:vac-ext-param},
we have the isomorphism $H^i(V*\bV_{0})^{G(O)}\otimes_{\fz_0}\bV_0\isom H^i(V*\bV_0)$.

Let $k\geq 0$ be the maximal integer for which $H^k(V*\bV_0)\neq 0$. Then the map 
$V*\bV_{0}\lto H^k(V*\bV_{0})[-k]$ is non-zero. Therefore, with $V^{\vee}$ the dual representation to $V$, the adjoint map:
\[
\bV_0\lto V^{\vee}*H^k(V*\bV_0)[-k]
\]
\noindent is non-zero. 

Note that $V^{\vee}*H^k(V*\bV_0)$ lives in non-negative cohomological degrees because $H^k(V*\bV_0)$ is isomorphic to a direct sum of copies of $\bV_0$,
and $V^{\vee}*\bV_0=(V^{\vee}*\bV)_0=R\Ga(Gr_G,\fF_{V^{\vee}})_0$ lives in non-negative cohomological degrees. 

Therefore, $V^{\vee}*H^k(V*\bV_0)[-k]$ sits in cohomological degrees $\geq k$. But because $\bV_0$ lives in cohomological degree zero, no such map can exist unless $k=0$.
Thus, higher cohomology must vanish, and we have already seen that $H^0(V*\bV_0)^{G(O)}$ is free over $\fz_0$ and therefore coincides with $\Psi(H^0(V*\bV_0))$.

\subsection{} Now fix $\fp$ a prime ideal of $\fz$ with $1\leq n=\on{height}(\fp)<\infty$. We
assume that the inductive statement is proved for all prime ideals properly contained in $\fp$ and will deduce below that it is satisfied for $\fp$.

We let $k_{\fp}$ denote the residue field of $\fz_{\fp}$.

\subsection{} Let $\bV_{k_{\fp}}=\bV\otimes_{\fz}k_{\fp}$ and let $V*\bV_{k_{\fp}}$ be as in Section \ref{ss:induction-start}, i.e., it is an object of the appropriate $G(O)$-equivariant derived category of
$\fgh\modules_{k_{\fp}}$ given by the convolution format ``with extra parameters." There is a natural quasi-isomorphism $(V*\bV_{\fp})\overset{L}{\otimes}_{\fz_{\fp}}k_{\fp}\lto V*\bV_{k_{\fp}}$ coming
from the convolution format (here we use the flatness of $\bV$ as a $\fz$-module). We will regard these objects as objects of the $G(O)$-equivariant derived category of 
$\fgh\modules_{\fz_{\fp}}$ when convenient.

\subsection{} Let $k$ be the highest degree in which $H^k(V*\bV_{\fp})\neq 0$. Suppose for the sake of contradiction that $k\geq{1}$.

\subsection{} Recall that a finitely generated module over a Noetherian ring has non-trivial support. Therefore, by the Finiteness Theorem and because $\fz_{\fp}$ is Noetherian, the 
support of $\Psi(H^k(V*\bV_{\fp}))$ must be non-zero. By the inductive hypothesis and because $k\geq{1}$, it therefore must be exactly $\fp$.
But this means that a sufficiently large power of $\fp$ annihlates $\Psi(H^k(V*\bV_{\fp}))$. Equivalently, $\Psi(H^k(V*\bV_{\fp}))$ is a finite length $\fz_{\fp}$-module. 

Because $\Psi$ is faithful and exact, both of the properties ``lift" to similar properties of $H^k(V*\bV_{\fp})$, i.e., this module is annihlated by large powers of $\fp$ and is finite length in $\fgh\modules_{reg}^{G(O)}$.
In particular, by Corollary \ref{c:injection}, $H^k(V*\bV_{\fp})$ admits a filtration by modules of the form $N_i\otimes_{\fz_{\fp}}\bV_{\fp}$ where $N_i$ is a $\fp$-torsion $\fz_{\fp}$-module. Considering the
top term in this filtration, it follows that $H^k(V*\bV_{\fp})$ admits a surjection to $\bV_{k_{\fp}}$.

\subsection{}\label{ss:bound} Next, we claim that $k\leq{n}$, where we recall that $n=\on{height}(\fp)$.

Indeed, by the above $H^k(V*\bV_{\fp})$ admits a surjection to $\bV_{k_{\fp}}$. Therefore, we have a non-zero map:
\[
V*\bV_{\fp}\lto \bV_{k_{\fp}}[-k]
\]
which gives rise by the adjunction to a non-zero map:
\[
\bV_{\fp}\lto V^{\vee}*\bV_{k_{\fp}}[-k].
\]
\noindent Therefore, it suffices to show that $V^{\vee}*\bV_{k_{\fp}}$ is concentrated in cohomological degrees $\geq{-n}$. 

But this follows immediately by considering the Koszul resolution of $k_{\fp}$ over $\fz_{\fp}$ because 
$V^{\vee}*\bV_{\fz_{\fp}}=R\Ga(\Gr_G,\fF_{V^{\vee}})_{\fp}$ is concentrated in non-negative cohomological degrees.

\subsection{}\label{ss:tor-sseq} We have a spectral sequence:
\[
E_1^{pq}=\Tor_{-q}^{\fz_{\fp}}(H^p(V*\bV_{\fp}),\fz_{\fp})\Rightarrow H^{p+q}((V*\bV_{\fp})\underset{\fz_{\fp}}{\overset{L}{\otimes}}k_{\fp}).
\]
\noindent This spectral sequence converges because $\fz_{\fp}$ is a regular local ring and therefore $\Tor$-finite. 
This spectral sequence implies that $H^k(V*\bV_{\fp})\otimes_{\fz_{\fp}}k_{\fp}\isom H^k((V*\bV_{\fp})\otimes_{\fz_{\fp}}^Lk_{\fp})$.

Note that $\Psi(H^k(V*\bV_{\fp})\otimes_{\fz_{\fp}}k_{\fp})=\Psi(H^k(V*\bV_{\fp}))\otimes_{\fz_{\fp}}k_{\fp}$ is a finite-dimensional vector space by the Finiteness Theorem. Moreover,
we deduce by Nakayama's lemma that it is non-zero because $\Psi(H^k(V*\bV_{\fp}))$ is.

\subsection{} Now we change notation somewhat: let $k$ denote the largest integer for which $H^k(V*\bV_{\fp})\neq{0}$ where we no longer
fix the finite dimensional representation $V$, i.e., it is the largest integer for which $H^{k'}(W*\bV_{\fp})=0$ for all $k'>k$ and for all finite dimensional $W\in\Rep(\ld{G})$. 
Note that $k\leq{n}<\infty$ by Section \ref{ss:bound}. Again, we suppose for the sake of contradiction that $k\geq{1}$.

By Section \ref{ss:bound}, we have $H^{k'}(W*\bV_{k_{\fp}})=0$ for $k'\not\in[-n,k]$.

\subsection{} We begin by showing $H^k(W*H^k(V*\bV_{k_{\fp}}))=0$ for every finite dimensional $V,W\in\Rep(\ld{G})$. 

We have a Grothendieck spectral sequence:
\[
E_2^{pq}=H^p(W*H^q(V*\bV_{k_{\fp}}))\Rightarrow H^{p+q}((W\otimes{V})*\bV_{k_{\fp}}).
\]
\noindent All of the cohomology groups above vanish outside of degrees $[-n,k]$. Therefore $E_2^{kk}=E_{\infty}^{kk}$, and this is zero because $H^{2k}((W\otimes V)*\bV_{k_{\fp}})=0$ (because
we assume $k>0$ and therefore $2k>k$). 

\subsection{} Suppose $V$ is chosen so that $H^k(V*\bV_{\fp})\neq{0}$. As before, the module $H^k(V*\bV_{\fp})$ is equipped with a surjection to $\bV_{k_{\fp}}$. Let $K$ denote the kernel of this map. 

Note that $\Psi(K)$ is also $\fp$-torsion and therefore of finite length with a filtration by $\bV_{k_{\fp}}$. Therefore $H^i(V*K)=0$ for $i>k$ by d\'evissage.

\subsection{} Applying convolution by $V$ to the short exact sequence:
\[
0\lto K\lto H^k(V*\bV_{k_{\fp}})\lto \bV_{k_{\fp}}\lto 0
\]
\noindent and taking $k$th cohomology, we get the exact sequence:
\[
0=H^k(V*H^k(V*\bV_{\fp}))\lto H^k(V*\bV_{k_{\fp}})\lto H^{k+1}(V*K)=0.
\]
\noindent Therefore, we deduce that $H^k(V*\bV_{k_{\fp}})=0$. But we've seen in Section \ref{ss:tor-sseq} that $H^k(V*\bV_{k_{\fp}})$ is non-zero, so we have a contradiction.

Thus, $H^i(V*\bV_{\fp})=0$ for all $i>0$ and all $V\in\Rep(\ld{G})$.

\subsection{} It remains to show that $\Psi(V*\bV_{\fp})=H^0(\Psi(V*\bV_{\fp}))$ is a projective $\fz_{\fp}$-module. By the Finiteness Theorem, this $\fz_{\fp}$-module is finitely generated. Therefore, we need
only to see that it is flat. 

It suffices to show that the complex:
\[
\Psi(V*\bV_{\fp})\underset{\fz_{\fp}}{\overset{L}{\otimes}}k_{\fp}
\]
\noindent is concentrated in cohomological degree $0$, i.e., that no $\Tor$ is formed when tensoring with the residue field $k_{\fp}$. By linearity of $\Psi$, we have a quasi-isomorphism: 
\[
\Psi(V*\bV_{\fp})\underset{\fz_{\fp}}{\overset{L}{\otimes}}k_{\fp}\isom \Psi((V*\bV_{\fp})\underset{\fz_{\fp}}{\overset{L}{\otimes}}{k_{\fp}}).
\]
\noindent By the convolution format, the right hand side is $\Psi(V*\bV_{k_{\fp}})$. Therefore it suffices to see that $\Psi(V*\bV_{k_{\fp}})$ is concentrated in cohomological degree $0$,
i.e., that this is true of $V*\bV_{k_{\fp}}$.

\subsection{}\label{ss:induction-finish} We have shown that 
$V*\bV_{k_{\fp}}$ is concentrated in non-positive cohomological degrees for all $V$, and that this
object is cohomologically bounded below (by $-n$). Let $k\in\bZ^{\leq 0}$ be the smallest integer for which $H^k(V*\bV_{k_{\fp}})\neq{0}$. Note that there is a non-zero map $\bV_{k_{\fp}}\lto H^k(V*\bV_{k_{\fp}})$. This gives us a non-zero map:
\[
\bV_{k_{\fp}}[-k]\lto V*\bV_{k_{\fp}}
\]
\noindent which by adjunction gives rise to a non-zero map:
\[
V^{\vee}*\bV_{k_{\fp}}[-k]\lto \bV_{k_{\fp}}.
\]
\noindent Here the left hand side is concentrated in degrees $\leq{k}$ and the right hand side is concentrated in degree $0$. Since $k\leq 0$ we must have $k=0$ as desired.

This completes the inductive step, and therefore we deduce that $(*)_n$ is true for all $n$.

\subsection{} To proceed, we will need the following lemma:

\begin{lem}\label{l:height}
For any non-zero finitely presented $\fz$-module $N$, there exists a finite height prime ideal $\fp$ of $\fz$ such that the localization $N_{\fp}$ is non-zero.
\end{lem}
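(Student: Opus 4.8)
The statement asks: for a non-zero finitely presented module $N$ over the countably-generated polynomial ring $\fz$, there is a finite-height prime $\fp$ with $N_{\fp}\neq 0$. The plan is to reduce to commutative algebra over a finitely generated polynomial subalgebra. Because $N$ is finitely presented, there is a finite subset of the generators $X_1,\ldots,X_r$ of $\fz$ such that, writing $R=\bC[X_1,\ldots,X_r]$ and $\fz=R[X_{r+1},X_{r+2},\ldots]$, the module $N$ is extended from a finitely presented $R$-module $N_0$; that is, $N\cong N_0\otimes_R\fz$, and moreover $N_0\neq 0$ because $N\neq 0$ and $\fz$ is faithfully flat over $R$.

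\smallskip

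Next I would pick a minimal prime $\fq$ of the (Noetherian!) ring $R$ in the support of $N_0$, so that $(N_0)_{\fq}\neq 0$. The prime $\fq$ has finite height in $R$ since $R$ is finite-dimensional. Now set $\fp:=\fq\cdot\fz + (X_{r+1},X_{r+2},\ldots)$, the prime ideal of $\fz$ consisting of all polynomials whose ``constant term'' (as a polynomial in the later variables $X_{r+1},\ldots$ over $R$) lies in $\fq$. This is prime because $\fz/\fp\cong R/\fq$ is a domain, and it is finitely generated since $\fq$ is finitely generated in $R$; hence by Proposition \ref{p:polynomials} (applied with the Noetherian base $R$, or directly by the height bound in its proof) the ideal $\fp$ has finite height in $\fz$. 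Equivalently, one can invoke Corollary \ref{c:noeth} to know $\fz_{\fp}$ is Noetherian.

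\smallskip

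Finally I would check $N_{\fp}\neq 0$. Localizing $N\cong N_0\otimes_R\fz$ at $\fp$ gives $N_{\fp}\cong (N_0)\otimes_R \fz_{\fp}$. The composite $R\to\fz\to\fz_{\fp}$ sends the multiplicative set $R\setminus\fq$ into the units of $\fz_{\fp}$ (since $\fp\cap R=\fq$), so this map factors through $R_{\fq}$, and $\fz_{\fp}$ is a localization of $R_{\fq}[X_{r+1},\ldots]$ — in particular it is faithfully flat, or at least nonzero-preserving, over $R_{\fq}$. Therefore $N_{\fp}\cong (N_0)_{\fq}\otimes_{R_{\fq}}\fz_{\fp}$, and since $(N_0)_{\fq}\neq 0$ (it is a nonzero finitely generated module over the local ring $R_{\fq}$, so Nakayama gives $(N_0)_{\fq}\otimes_{R_{\fq}}k(\fq)\neq 0$) and $\fz_{\fp}$ contains a copy of the residue field $k(\fq)$, we conclude $N_{\fp}\neq 0$.

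\smallskip

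The main obstacle is the bookkeeping in the first step: verifying cleanly that a finitely presented module over $\fz$ is extended from a finitely presented module over some finitely generated polynomial subalgebra, and that the chosen $\fp$ really contracts to $\fq$ so that the localizations match up. Once that descent is set up, the rest is routine Nakayama plus the finite-height criterion already established in Proposition \ref{p:polynomials} and Corollary \ref{c:noeth}.
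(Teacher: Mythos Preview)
Your overall strategy matches the paper's, but there is a genuine error in your choice of prime. The ideal $\fp=\fq\cdot\fz+(X_{r+1},X_{r+2},\ldots)$ is \emph{not} finitely generated: modulo $\fq\cdot\fz$ it becomes the ideal $(X_{r+1},X_{r+2},\ldots)$ in $(R/\fq)[X_{r+1},X_{r+2},\ldots]$, which visibly requires infinitely many generators. Consequently, by Proposition~\ref{p:polynomials} (the direction ``finite height $\Rightarrow$ finitely generated''), your $\fp$ has \emph{infinite} height --- indeed one sees directly the infinite chain $\fp\supsetneq \fq\cdot\fz+(X_{r+1},\ldots,X_{r+k})\supsetneq\cdots$ of primes below it. So the prime you produce does not witness the lemma.

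The fix, and what the paper does, is to take instead the \emph{extension} ideal $\fp:=\fq\cdot\fz$. This is prime because $\fz/\fp\cong (R/\fq)[X_{r+1},X_{r+2},\ldots]$ is a domain, and it is finitely generated (by the same generators as $\fq$), hence of finite height by Proposition~\ref{p:polynomials}. One still has $\fp\cap R=\fq$, so your localization argument goes through unchanged: $N_{\fp}\cong (N_0)_{\fq}\otimes_{R_{\fq}}\fz_{\fp}$, and since the ring map $R_{\fq}\to\fz_{\fp}$ is local, Nakayama gives $N_{\fp}\otimes_{\fz_{\fp}}k(\fp)\cong (N_0)_{\fq}\otimes_{R_{\fq}}k(\fq)\otimes_{k(\fq)}k(\fp)\neq 0$. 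The rest of your write-up (descent of the finite presentation to $R$, and the bookkeeping you flag at the end) is correct and essentially identical to the paper's argument.
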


\begin{proof}

There exists $\fz'\subset\fz$ a sub-algebra of finite type over $\bC$ and a finitely generated $\fz'$-module $N_0$ such that
$N\simeq N_0\otimes_{\fz'}\fz$ (to produce such data, choose a finite presentation of $N$, let $\fz'$ be the subalgebra generated by the coefficients appearing
in the matrix for the presentation, and then define $N_0$ by the ``same" presentation over $\fz'$). Because $\fz$ is a polynomial algebra, expanding $\fz'$ as necessary we can
assume that $\fz$ is obtained from $\fz'$ by adjoining (countably many) generators freely.
Let $\fp_0$ be a prime ideal of $\fz'$ in the support of $N_0$ (e.g., any associated prime will do). Of course, $\fp_0$ is finitely generated. Let $\fp:=\fp_0\otimes_{\fz'}\fz$.
Because $\fz$ is a polynomial algebra over $\fz'$, $\fp$ is prime in $\fz$. So we are done by Proposition \ref{p:polynomials}. 

\end{proof}

\subsection{} Now we are set to show that $H^i(V*\bV)=0$ for $i>0$.

We immediately reduce to proving the vanishing of cohomology in the case when $V$ is finite-dimensional. Suppose $k$ is the largest degree for which 
$H^{k}(V*\bV)\neq 0$. By Proposition \ref{p:psi-exact}, $\Psi(H^{k}(V*\bV))\neq 0$.

We have already seen that the localization $H^i(V*\bV)_{\fp}$ at a finite height prime $\fp$ is zero for all $i>0$.
But by the Finiteness Theorem, this gives a contradiction to Lemma \ref{l:height} applied to $\Psi(H^{k}(V*\bV))$ unless $k=0$.

\subsection{} Finally, to complete the proof of Theorem \ref{t:main}, it remains to show that $H^0(V*\bV)^{G(O)}$ is projective over $\fz$.
Towards this end, we first show instead that\footnote{The notation may seem a bit strange: we write $H^0(V*\bV)^{G(O)}$ but $\Psi(V*\bV)$ without an $H^0$. This is to emphasize that we are taking 
$G(O)$-invariants in the underived sense, while there is no such ambiguity with $\Psi$.} $\Psi(V*\bV)$ is a finite rank projective $\fz$-module. 

The Finiteness Theorem implies that $\Psi(V*\bV)$ is finitely presented over $\fz$, so it suffices to show that this module is flat. For this, it is enough to show that
for any finitely presented $\fz$-module $N$ we have
$\Tor_i(\Psi(V*\bV),N)=0$ for $i>0$. 

Note that the $\fz$-module $\Tor_i(\Psi(V*\bV),N)$ is finitely presented because $\fz$, being an infinite polynomial algebra, is coherent (indeed, $\Psi(V*\bV)$ and $N$ are finitely presented and therefore 
coherence implies that they admit resolutions by finite free $\fz$-modules). But for
$i>0$ the localizations of these groups at finite height prime ideals is $0$, so by Lemma \ref{l:height} they must vanish as desired. Therefore $\Psi(V*\bV)$ is flat and finitely presented and therefore projective.

\subsection{} Let $\fp$ be a height one prime ideal of $\fz$. In this case we will show that $H^0(V*\bV_{\fp})^{G(O)}$ is a flat $\fz_{\fp}$-module.

By Proposition \ref{p:polynomials} we know that $\fz_{\fp}$ is a DVR. Therefore we need only to see that $H^0(V*\bV_{\fp})^{G(O)}$ is torsion-free. 

If it were not, we would have an injection $\bV_{k_{\fp}}\into V*\bV_{\fp}$. By adjunction, this gives rise to a non-zero map $V^{\vee}*\bV_{k_{\fp}}\lto \bV_{\fp}$. However, the left hand side is annihlated by $\fp$ while the right hand side
has no $\fp$-torsion, so we have a contradiction.

\subsection{}\label{ss:psi-diagram} We have the following diagram:
\[\xymatrix{
H^0(V*\bV)^{G(O)}\otimes_{\fz}\bV\ar[rr]\ar[d] && V*\bV \\
\Psi(V*\bV)\otimes_{\fz}\bV.
}\]
Moreover, after at any height one prime ideal of $\fz$ the maps in this diagram become isomorphisms.
Indeed, the localizations of $H^0(V*\bV)^{G(O)}$ at such primes is free so we get the desired result by applying Corollary \ref{c:vac-ext-param}.

\subsection{} We have the following (simple) lemma from commutative algebra (whose statement we were unable to find in the literature):

\begin{lem}\label{l:map-extn}
Let $A$ be a (possibly non-Noetherian) UFD and let $M$ and $N$ be two $A$-modules with $M$ flat over $A$. Suppose that for
each height one prime ideal $\fp$ of $A$ we are given morphisms $f_{\fp}:N_{\fp}\lto M_{\fp}$ which are compatible in the sense that the induced morphisms
over the generic point of $A$ give rise to the same map $f_0:N_0\lto M_0$ (here e.g. $N_0$ denotes the localization at the generic point). Then
the map $f_0$ extends uniquely to an $A$-linear map $f:N\lto M$ such that the following diagram commutes:
\[\xymatrix{
N\ar[rr]^{f}\ar[d] && M\ar[d] \\
N_0\ar[rr]^{f_0} && M_0.
}\]
\end{lem}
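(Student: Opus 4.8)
The plan is to reconstruct the map $f$ by a covering argument: a flat module over a UFD is the intersection of its localizations at height one primes, taken inside its generic localization. So I would first verify the key algebraic fact that for $M$ flat over a UFD $A$, the natural map $M \to \bigcap_{\on{ht}(\fp)=1} M_{\fp}$ (intersection computed inside $M_0 = M \otimes_A \on{Frac}(A)$) is an isomorphism. The injectivity part is easy: $M$ flat over a domain is torsion-free, hence $M \into M_0$. For surjectivity, suppose $x \in M_0$ lies in $M_{\fp}$ for every height one prime $\fp$. Write $x = m/a$ with $m \in M$, $a \in A$; I want to show $a \mid m$ in $M$, i.e. $m \in aM$. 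Since $A$ is a UFD it suffices to treat the case $a = \pi$ a prime element, and then to show $m \in \pi M$. The hypothesis $x \in M_{\fp}$ for $\fp = (\pi)$ says precisely that, after localizing at $(\pi)$, $m$ becomes divisible by $\pi$; and flatness of $M$ lets us descend this: the sequence $0 \to A \xrightarrow{\pi} A \to A/\pi \to 0$ tensored with $M$ stays exact, so $M/\pi M \into (M/\pi M)_{(\pi)} = M_{(\pi)}/\pi M_{(\pi)}$ (the localization $A \setminus (\pi)$ acts invertibly on $A/\pi A$ away from... — here one must be slightly careful: $A/\pi A$ need not be local, but $m$ maps to $0$ in the localization at $(\pi)$ means $sm \in \pi M$ for some $s \notin (\pi)$; flatness gives $(M/\pi M)$ is a module over the domain... ). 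The cleanest route: $m \in \pi M$ iff the image $\bar m \in M/\pi M$ is zero; the hypothesis gives $\bar m$ dies in $(M/\pi M) \otimes_{A/\pi A} (A/\pi A)_{(\pi)}$, and since $M/\pi M$ is flat over $A/\pi A$ it is torsion-free as an $(A/\pi A)$-module, wait — $A/\pi A$ is not a domain in general. I will instead argue directly from the UFD hypothesis that $aM_0 \cap M = aM$: factor $a = \pi_1 \cdots \pi_r$ into primes and induct, reducing to $a = \pi$; then $m/\pi \in M_{(\pi)}$ means $sm = \pi m'$ for $s \notin (\pi)$, $m' \in M$. Consider any height one prime $\fq$: if $\fq \neq (\pi)$ then $\pi$ is a unit in $A_\fq$ so $m/\pi \in M_\fq$ automatically; combining over all $\fq$ including $(\pi)$ we need $m/\pi$ to come from $M$ globally. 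Here is where flatness really enters: $M \otimes_A (A/sA)$ — no. Let me just say: I will prove $m \in \pi M$ by observing that $sm \in \pi M$ with $s, \pi$ coprime in the UFD $A$ forces, via flatness (which implies $\on{Tor}_1^A(M, A/(s,\pi)) $ considerations and the Koszul complex on the regular sequence-like pair $s,\pi$), that $m \in \pi M$. Since $s, \pi$ generate the unit ideal in $A_{(\pi)}$... the honest statement is: $sM \cap \pi M = s\pi M$ for $M$ flat when $(s) + (\pi) = A$ or more generally — this is exactly the flatness criterion "$M$ flat $\Rightarrow$ $IM \cap JM = (I \cap J)M$", and $(s) \cap (\pi) = (s\pi)$ in a UFD when $s, \pi$ are coprime. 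That's the key input.

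\medskip

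Granting the intersection description, I would then \emph{define} $f : N \to M$ as follows. Given $n \in N$, let $n_0 \in N_0$ be its image and set $x := f_0(n_0) \in M_0$. For each height one prime $\fp$, the element $n$ maps into $N_\fp$, and applying $f_\fp$ gives an element of $M_\fp$ whose image in $M_0$ is $f_0(n_0) = x$ by the compatibility hypothesis. Hence $x \in M_\fp$ for \emph{every} height one $\fp$, so by the intersection lemma $x \in M$. Define $f(n)$ to be this element $x \in M \subset M_0$. This is manifestly $A$-linear (it is the restriction of the $A$-linear map $f_0$ to $N$, with the key point being that the image lands in $M$), makes the square commute by construction, and is the unique such extension since $M \into M_0$ is injective — any two extensions agreeing with $f_0$ after composing with $M \to M_0$ must be equal.

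\medskip

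The main obstacle is the intersection lemma $M = \bigcap_{\on{ht}\fp = 1} M_\fp$ for flat $M$ over a non-Noetherian UFD $A$ — in the Noetherian normal case this is standard (Serre's criterion / algebraic Hartogs), but here $A = \fz$ is an infinite polynomial algebra and not Noetherian, so I cannot cite the usual references. The workaround is to reduce everything to the UFD structure directly: the point is genuinely elementary once phrased as "$sM \cap \pi M = s\pi M$ whenever $\on{gcd}(s,\pi) = 1$", which follows from flatness (giving $IM \cap JM = (I\cap J)M$ for finitely generated ideals, hence for principal ideals in any commutative ring) together with the UFD identity $(s) \cap (\pi) = (s\pi)$. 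Everything else — reducing divisibility by a general $a$ to divisibility by a prime via unique factorization, and the formal diagram-chase producing $f$ — is routine. I would also note for completeness that $A$ being a UFD is used only through this gcd identity; one does not need any finiteness on $N$.
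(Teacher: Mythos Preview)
Your approach is correct and essentially identical to the paper's: both reduce existence to the intersection formula $M = \bigcap_{\on{height}(\fp)=1} M_{\fp}$ inside $M_0$, and both prove this by stripping prime factors from a denominator using flatness together with coprimality in the UFD. The paper phrases the key divisibility step via the Koszul complex for the regular sequence $(h,p^n)$ tensored with the flat module $M$, which is exactly the same content as your identity $sM \cap \pi M = (s)\cap(\pi)\cdot M = s\pi M$ for $\gcd(s,\pi)=1$; the only cosmetic difference is that the paper removes a full prime power $p^n$ at once while you peel off one prime at a time.
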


\begin{rem}
The argument below shows that it is enough to have the maps $f_{\fp}$ only when $\fp$ is a \emph{principal} prime ideal of height 1, though for $A=\fz$ all height 1 prime ideals are principal by Proposition \ref{p:polynomials}. 
\end{rem}

\begin{proof}

For every prime ideal $\fp$ of $A$, the map $M\lto M_{\fp}$ is an injection because $M$ is flat and because $A$ is a domain. Applying this in the case $\fp=0$, we immediately deduce
the uniqueness of any such extension. To see existence, it suffices to show:
\[
M=\underset{\on{height}(\fp)=1}{\bigcap}M_{\fp}.
\]
\noindent Clearly the left hand side is contained in the right hand side. 

Let $x\in\underset{\on{height}(\fp)=1}{\bigcap}M_{\fp}$. For some $0\neq f\in A$, we have $fx\in M$. Let $p$ be a prime element of $A$ and choose $n\in\bZ$ so that $p^n\mid f$ and 
$g:=\frac{f}{p^n}$ is not divisible by $p$.
We will show that $gx\in M$, so that by iterating the argument over primes dividing $f$ we deduce that $x\in M$ as desired.

Because $x\in M_{(p)}\subset M_0$, for some $h\in A$ prime to $p$ we have $hx\in M$. Then we have:
\[
h\cdot (fx)-p^n\cdot(ghx)=0
\]
\noindent Note that here $fx$ and $ghx$ are elements of $M$. Because $h$ is prime to $p$, the pair $(h,p^n)$ forms a regular sequence, and therefore flatness of $M$ and 
consideration of the Koszul complex for $(h,p^n)$ imply that there exists
some $y\in M$ such that $p^ny=fx$ and $hy=fx$. But clearly $y=gx$ because $M_0$ is torsion-free and $gx$ satisfies the above equations, so we deduce that $gx\in M$.

\end{proof}

\subsection{} We claim that there is a unique $\fgh$-module map $V*\bV\lto \Psi(V*\bV)\otimes_{\fz}\bV$ making the diagram from Section \ref{ss:psi-diagram} commute. 

Indeed, $\Psi(V*\bV)\otimes_{\fz}\bV$ is flat over $\fz$ (since we have seen this for both factors). Therefore, applying Lemma \ref{l:map-extn} and the observation from
Section \ref{ss:psi-diagram}, we get a $\fz$-linear map as desired, and this is immediately seen to be a $\fgh$-module map by the uniqueness statement of the lemma.

\subsection{}\label{ss:main-finish} To complete the proof of Theorem \ref{t:main}, it suffices to show that the map $V*\bV\lto\Psi(V*\bV)\otimes_{\fz}\bV$ is an isomorphism. It suffices
to see this after applying the functor $\Psi$. More precisely, we claim that the induced map $\Psi(V*\bV)\lto\Psi(V*\bV)$ is the identity. Indeed, it suffices to check this at the generic point,
where it follows by construction.

\section{Birth of opers}\label{s:opers}

\subsection{} In this section, we recall the ``birth of opers" construction from \cite{bd} (see \emph{loc. cit.} 5.2.18, 5.5). This construction uses Theorem \ref{t:main} to construct an $\Aut^+$-equivariant map:
\[
\vph:\Spec(\fz)\lto \Op_{\ld{G}}(\cD).
\]

The construction is essentially given in \cite{bd}, but some details are not included because the text is unfinished. The construction appears not to be given in a complete form in the literature, so we
include a short exposition of their construction.

\subsection{} Suppose $S$ is a scheme equipped with a map $S\lto \Op_{\ld{G}}(\cD)$. This data defines in particular a $\ld{B}$-bundle on $S$ by Remark \ref{r:bg}.

\subsection{} We have the following observation, proved as \cite{bd} 3.5.8:

\begin{prop}\label{p:oper}
Let $A$ be a commutative algebra with an action of $\Aut^+$ satisfying the following conditions:
\begin{enumerate}

\item The eigenvalues of $L_0$ acting on $A$ are non-negative.

\item The eigenvalue $0$ of $L_0$ occurs with multiplicity one.

\item The eigenvalue $1$ of $L_0$ occurs with multiplicity zero.

\end{enumerate}

Suppose $\Spec(A)$ is equipped with an $\Aut^+$-equivariant $\ld{G}$-bundle $\cQ_{\ld{G}}$
and with a $\Aut$-equivariant reduction $\cQ_{\ld{B}}$ to $\ld{B}$. Then this bundle comes as the restriction from an $\Aut^+$-equivariant map $\Spec(A)\lto\Op_{\ld{G}}(\cD)$ if and 
only if the $\Aut$-equivariant $\ld{H}$-bundle on $\Spec(A)$ induced via $\ld{B}\lto\ld{H}$ is obtained by pull-back from 
$\Spec(\bC)$ of the $\Aut$-equivariant $\ld{H}$-bundle $\rho(\omega_{\cD}/t\omega_{\cD})$.

\end{prop}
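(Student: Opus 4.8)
The plan is to identify an $\Aut^+$-equivariant map $\Spec(A)\to\Op_{\ld{G}}(\cD)$ with an $\Aut^+$-equivariant \emph{relative oper} over the formal disc $\cD_A:=\Spec(A[[t]])$ above $\Spec(A)$, and then to recover the data $(\cQ_{\ld{G}},\cQ_{\ld{B}})$ by restricting such a relative oper to the divisor $\{t=0\}=\Spec(A)$. Here one uses that $\Op_{\ld{G}}(\cD)$ carries a tautological oper --- a $\ld{B}$-bundle on $\cD\times\Op_{\ld{G}}(\cD)$ together with a connection along $\cD$ on the associated $\ld{G}$-bundle satisfying Conditions \ref{i:grif} and \ref{i:nondeg} --- with the whole package equivariant for the diagonal action of $\Aut^+$ on $\cD\times\Op_{\ld{G}}(\cD)$, so that an $\Aut^+$-equivariant map out of $\Spec(A)$ is the same as this package pulled back to $\cD_A$. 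Restricting to $t=0$, the $\ld{B}$-bundle becomes an $\Aut$-equivariant (only $\Aut$, since $L_0$'s translation companion $L_{-1}$ moves the closed point of $\cD$) $\ld{B}$-bundle on $\Spec(A)$ --- precisely the bundle of Remark \ref{r:bg} --- while the presence of the connection is exactly what promotes the associated $\ld{G}$-bundle to an $\Aut^+$-equivariant one. Thus the two sides of the asserted equivalence differ only in the oper conditions, and the proposition comes down to deciding which restricted data $(\cQ_{\ld{G}},\cQ_{\ld{B}})$ occur.

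For the ``only if'' direction it then suffices to recall that the $\ld{H}$-bundle induced from the $\ld{B}$-bundle of \emph{any} oper on $\cD$ is canonically isomorphic to $\rho(\omega_\cD)$ --- a rigidity coming from the non-degeneracy Condition \ref{i:nondeg} --- so that its fiber at $0$ is canonically the $\Aut$-equivariant $\ld{H}$-torsor $\rho(\omega_\cD/t\omega_\cD)$, uniformly over $\Op_{\ld{G}}(\cD)$. Pulling this identification back along the given map trivializes, $\Aut$-equivariantly, the $\ld{H}$-bundle attached to $\cQ_{\ld{B}}$, which is the stated condition.

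The content is the ``if'' direction. Given $(\cQ_{\ld{G}},\cQ_{\ld{B}})$ as in the statement, I would first de-equivariantize: using $\Aut^+/\Aut\cong\cD$ (via the constant term of a power series), so that $\cD_A\cong\Aut^+\times_{\Aut}\Spec(A)$, descent identifies $\Aut^+$-equivariant $\ld{B}$-bundles on $\cD_A$ with $\Aut$-equivariant $\ld{B}$-bundles on $\Spec(A)$; this spreads $\cQ_{\ld{B}}$ out to an $\Aut^+$-equivariant $\ld{B}$-bundle $\cP_{\ld{B}}$ on $\cD_A$ with $\cP_{\ld{B}}|_{t=0}=\cQ_{\ld{B}}$. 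Moreover the \emph{extra} equivariance carried by the associated $\ld{G}$-bundle $\cQ_{\ld{G}}$ --- the part involving $L_{-1}$ --- is precisely the datum of a connection $\nabla$ along $\cD$ on $\cP_{\ld{G}}$, $\Aut^+$-equivariantly; this is the standard dictionary between $\Aut^+$-equivariant bundles on $\Spec(A)$ and $\Aut^+$-equivariant bundles-with-connection on $\cD_A$, and it is here that hypothesis (1), the non-negativity of the $L_0$-eigenvalues, is used, since it is what lets the $L_n$-action integrate to an honest $\Aut$-action on the pro-finite-dimensional algebra $A$. One checks that the $\ld{G}$-bundle so obtained agrees with the one associated to $\cP_{\ld{B}}$, both arising from the single $\Aut$-equivariant bundle $\cQ_{\ld{G}}$ by the same twisted-product construction. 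The pair $(\cP_{\ld{B}},\nabla)$ is then a candidate relative oper, and the hypothesis on the $\ld{H}$-bundle of $\cQ_{\ld{B}}$ says exactly that the $\ld{H}$-bundle induced from $\cP_{\ld{B}}$ is $\rho(\omega_\cD)$; so in a suitable local trivialization the connection form of $\nabla$ has the leading behaviour required for the Drinfeld--Sokolov gauge-fixing procedure (cf.\ \cite{drinfeld-sokolov}) to apply: a unique gauge transformation valued in the unipotent radical of $\ld{B}$ brings $\nabla$ into oper canonical form. Being $\ld{B}$-valued, this gauge transformation changes neither $\cQ_{\ld{G}}$ nor $\cQ_{\ld{B}}$ as torsors, and by uniqueness it is automatically $\Aut^+$-equivariant, so the corrected pair is an $\Aut^+$-equivariant relative oper restricting to $(\cQ_{\ld{G}},\cQ_{\ld{B}})$ --- equivalently, the sought map $\Spec(A)\to\Op_{\ld{G}}(\cD)$.

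I expect the main obstacle to be verifying the two oper conditions for the gauge-fixed connection and pinning down exactly where hypotheses (2) and (3) enter. The $\Aut^+$-equivariance forces $c(\nabla)|_{t=0}$ to split into summands of definite $L_0$-weight: the simple negative-root components sit in weight $0$, so hypothesis (2), that the eigenvalue $0$ of $L_0$ on $A$ has multiplicity one, makes them constant, and the fact that the $\ld{H}$-bundle of $\cQ_{\ld{B}}$ equals $\rho(\omega_\cD/t\omega_\cD)$ on the nose --- rather than a smaller twist --- forces those constants to be nonzero, which is the non-degeneracy Condition \ref{i:nondeg}. The components of $c(\nabla)$ along non-simple negative roots, which would obstruct Griffiths transversality (Condition \ref{i:grif}), sit in strictly positive $L_0$-weight, the first one occurring in weight $1$; hypothesis (3), the absence of the eigenvalue $1$, kills it, and the recursive structure of the $\Aut^+$-equivariance --- equivalently, the Drinfeld--Sokolov normal form --- then propagates the vanishing to all higher weights. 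Making this weight bookkeeping precise in the presence of the relative base $\Spec(A)$ is the delicate point.
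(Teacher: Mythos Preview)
The paper does not prove this proposition; it only cites \cite{bd} 3.5.8. Your overall strategy --- spread $\cQ_{\ld B}$ out to $\cD_A$ via $\Aut^+\times_{\Aut}(-)$, read the connection off from the $L_{-1}$-action on $\cQ_{\ld G}$, then verify the oper conditions by $L_0$-weight bookkeeping --- is the right one and is essentially what \cite{bd} does. But two steps in your sketch are off, and one of them is a genuine gap.

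First, the Drinfeld--Sokolov gauge-fixing is a red herring: a gauge transformation by the unipotent radical $\ld N$ does not change the class $c(\nabla)\in(\ld\fg/\ld\fb)_{\cP_{\ld B}}\otimes\omega$, so it cannot convert a non-oper into an oper. You must verify Conditions \ref{i:grif} and \ref{i:nondeg} for the canonical pair $(\cP_{\ld B},\nabla)$ directly.

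Second, your attribution of the hypotheses is inverted, and this hides the real difficulty. Write $L_{-1}(\sigma)=c(\nabla)(\partial_t)|_{t=0}\in\Gamma(\Spec A,(\ld\fg/\ld\fb)_{\cQ_{\ld B}})$; since $\sigma$ is $\Aut$-fixed and $[L_0,L_{-1}]=L_{-1}$, this section has $L_0$-weight $1$. The $(-\check\alpha)$-graded piece of the target line bundle has $L_0$-weight $\langle\check\alpha,\rho\rangle$ (using the $\ld H$-bundle hypothesis), so the $A$-coefficient of the $(-\check\alpha)$-component has weight $1-\langle\check\alpha,\rho\rangle$. For non-simple $\check\alpha$ this is $\le -1$, hence vanishes by hypothesis (1) --- not (3) --- giving Griffiths transversality. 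For simple $\check\alpha_i$ the $A$-coefficient has weight $0$, hence lies in $A_0=\bC$ by (2): it is a constant $c_i$. The gap is in showing $c_i\neq 0$. Your claim that ``the $\ld H$-bundle equals $\rho(\omega_\cD/t\omega_\cD)$ on the nose forces those constants to be nonzero'' is not an argument: the $\ld H$-bundle is determined by $\cQ_{\ld B}$ alone and says nothing about $\nabla$. This is precisely where (3) enters. If some $c_i=0$, pass to the reduction of $\cQ_{\ld G}$ to the maximal parabolic $\ld P^i\supset\ld B$ whose Levi omits $\check\alpha_i$; the weight count shows $L_{-1}$ preserves it, so $\cQ_{\ld P^i}$ and hence the associated line $\ell$ (for the character $\check\omega_i$ of the Levi center, with $L_0$-weight $-1$ on constant sections) is $\Aut^+$-equivariant. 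Then $L_{-1}(1\otimes\ell)\in A_1\otimes\ell=0$ by (3), and $L_1(1\otimes\ell)\in A_{-1}\otimes\ell=0$ by (1), so $2L_0(1\otimes\ell)=[L_1,L_{-1}](1\otimes\ell)=0$, contradicting $L_0(1\otimes\ell)=-(1\otimes\ell)$. That Virasoro argument is the missing idea.
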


\begin{rem}
That the $\ld{B}$-reduction is merely $\Aut$-equivariant encodes the fact that the $\ld{B}$-reduction of an oper is not preserved by the connection.
\end{rem}

One readily verifies that the algebra $\fz$ satisfies the conditions of the proposition (it suffices to verify these conditions at the associated graded level). Therefore, to 
define $\vph$ it suffices to define an $\Aut^+$-equivariant $\ld{G}$-bundle on $\Spec(\fz)$
with an $\Aut$-equivariant reduction to $\ld{B}$ such that the induced $\ld{H}$-bundle is as in the proposition. 

\subsection{} We begin with the construction of an $\Aut^+$-equivariant $\ld{G}$-bundle on $\Spec(\fz)$. By Tannakian formalism, to give such a bundle amounts to giving an 
$\Aut^+$-equivariant symmetric monoidal functor:
\[
\cP:\on{Rep}(\ld{G})\lto \fz\modules
\]
\noindent where $\on{Rep}(\ld{G})$ has the trivial $\Aut^+$-action.

\subsection{} We define: 
\[
\cP(V):=H^0(V*\bV)^{G(O)}=\Ga(\Gr_G,\fF_V)^{G(O)}.
\]
\noindent By Theorem \ref{t:main}, $\cP(V)$ is a projective $\fz$-module. Let us upgrade $\cP$ to a monoidal functor. We need to give an isomorphism:
\[
\alpha_{VW}:\cP(V\otimes W)\isom \cP(V)\underset{\fz}{\otimes}\cP(W).
\]

Corollary \ref{c:invariants}, the natural map:
\[
\cP(V)=H^0(V*\bV)^{G(O)}\underset{\fz}{\otimes}\bV\lto H^0(V*\bV)=V*\bV
\]
\noindent is an isomorphism. By the convolution formalism, we have an isomorphism:
\[
(V\otimes W)*\bV\isom V*(W*\bV).
\]
\noindent By the above, the right hand side is $V*(\cP(W)\otimes_{\fz}\bV)$, and this is canonically isomorphic to $\cP(W)\otimes_{\fz}(V*\bV)$ by linearity of convolution. 
Therefore, applying $G(O)$-invariants to the isomorphism above gives:
\[
\cP(V\otimes W)=\bigg((V\otimes W)*\bV\bigg)^{G(O)}\isom \bigg(V*(W*\bV)\bigg)^{G(O)}\isom
\]
\[
\isom\cP(W)\underset{\fz}{\otimes}(V*\bV)^{G(O)}=\cP(W)\underset{\fz}{\otimes}\cP(V)=\cP(V)\underset{\fz}{\otimes}\cP(W).
\]
\noindent Then we define $\alpha_{VW}$ as this composition.

One immediately verifies that $\alpha_{VW}$ satisfies the associativity condition and therefore defines a monoidal functor. That it is symmetric monoidal
is verified as in \cite{bd} 5.5.5-6.

\subsection{} Next, we describe an $\Aut$-equivariant reduction of $\cP$ to a $\ld{B}$-bundle. 

We fix a uniformizer $t$ on $\cD$. Let $L_i=-t^{i+1}\cdot\frac{\partial}{\partial t}\in\Lie(\Aut^+)$.

\begin{lem}\label{l:key}
With $V^{\check{\lambda}}$ the irreducible representation of $\ld{G}$
of highest weight $\check{\lambda}$, the lowest eigenvalue of $L_0$ acting on $\Ga(\Gr_G,\fF_{V^{\check{\lambda}}})$ is $-\check{\la}(\rho)$. This eigenspace is one-dimensional over $\bC$.
\end{lem}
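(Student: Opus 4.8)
The plan is to compute the $L_0$-character of $\Ga(\Gr_G, \fF_{V^{\check\lambda}})$ using geometric Satake together with the known stratification of the affine Grassmannian, and then to read off the bottom eigenvalue and its multiplicity. First I would recall that by Theorem \ref{t:satake} the $D$-module $\fF_{V^{\check\lambda}}$ is the IC-sheaf of the orbit $\Gr_G^{\check\lambda} := G(O)\cdot\check\lambda(t)\cdot 1_{\Gr_G}$, which is a projective variety of dimension $\langle\check\lambda, 2\rho\rangle$ (here I use that $\rho$, the half-sum of positive roots of $\fg$, pairs with the dominant coweight $\check\lambda$ of $\fg$, which is a dominant weight of $\ld G$). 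The operator $L_0 = -t\,\partial/\partial t$ acts on $\Gr_G$ generating the $\bG_m$-action by loop rotation; its fixed points on $\overline{\Gr_G^{\check\lambda}}$ are exactly the $T$-fixed points $\mu(t)\cdot 1_{\Gr_G}$ for coweights $\mu$ in the convex hull of $W\check\lambda$, and the weight of $L_0$ on the line $\delta_{\mu(t)}$ is (a fixed normalization of) $\langle\mu,\mu\rangle$-type data — more precisely, the grading by loop rotation on $\Gr_G$ assigns to the stratum through $\mu(t)$ the value governed by $\langle\rho,\mu\rangle$ and the length.

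\textbf{Key steps.} The cleaner route, which I would actually pursue, is: (1) identify $\Ga(\Gr_G,\fF_{V^{\check\lambda}})$ as a $\fg[[t]]$-module (forgetting the $\fgh$-structure) with the Weyl module / costandard object whose underlying vector space, as a graded vector space for $L_0$, is computed by the decomposition of global sections of the IC-sheaf; (2) use that the associated graded of this module with respect to the $L_0$-filtration is the space of sections of the corresponding classical object on $\Gr_G$, whose $L_0$-character is the Hilbert series of the affine cone / semi-infinite orbit closure; (3) observe that the lowest $L_0$-weight vectors correspond to sections supported at the most negative $T$-fixed point, which is $w_0\check\lambda(t)\cdot 1_{\Gr_G} = -\check\lambda^{\vee,+}$... concretely the lowest-weight coweight $w_0(\check\lambda)$, and the $L_0$-eigenvalue there works out to $-\langle\check\lambda,\rho\rangle = -\check\lambda(\rho)$. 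For the one-dimensionality, I would note that the lowest-weight space for the $\ld G$-action (equivalently the $G(O)$-module structure) inside $\Ga(\Gr_G,\fF_{V^{\check\lambda}})^{G^{(1)}(O)} \cong V^{\check\lambda}$ is one-dimensional, and that the bottom $L_0$-eigenspace is precisely this lowest-weight line of the bottom cohomological/filtration piece — i.e. the minimal stratum contributes exactly a copy of $\bC$ in the lowest $L_0$-degree.

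\textbf{Main obstacle.} The delicate point is pinning down the precise normalization of the $L_0$-action and matching it across the equivalences — i.e. showing that the loop-rotation grading on $\Ga(\Gr_G,\fF_{V^{\check\lambda}})$ really has its minimum equal to $-\check\lambda(\rho)$ rather than, say, $-\langle\check\lambda,2\rho\rangle$ or something shifted by the dimension of the orbit. I expect this to come down to the following: the $D$-module $\fF_{V^{\check\lambda}}$ is \emph{not} the constant sheaf but the IC-sheaf, so the cohomological shift by $\dim\Gr_G^{\check\lambda} = \langle\check\lambda,2\rho\rangle$ enters, and combined with the intrinsic loop-rotation weight of the generating section one gets a net bottom weight of $\langle\check\lambda,2\rho\rangle - \langle\check\lambda,2\rho\rangle + (-\check\lambda(\rho))$ — but getting the bookkeeping exactly right requires care. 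A robust way to sidestep this is to verify the formula in rank one ($G = SL_2$), where $\Gr_G^{n\check\alpha/2}$ is an explicit projective variety and $\Ga$ of its IC-sheaf is the Weyl module $\bV^{(n)}$ with a completely explicit $L_0$-character, giving bottom weight $-n/2 = -n\check\alpha(\rho)/\ldots$; then the general case follows by restricting to the $SL_2$'s attached to simple coroots of $\ld G$ (equivalently simple roots of $\fg$), since both the bottom $L_0$-weight and the lowest $\ld G$-weight line are detected by these. One-dimensionality is then immediate because the lowest weight space of an irreducible $\ld G$-representation is one-dimensional. I would also remark that $\Aut^+$-equivariance of $\Ga$ (which we have) guarantees the $L_0$-action is well-defined and integrable, so the character statement even makes sense.
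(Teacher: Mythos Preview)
The paper does not prove this lemma itself; it cites \cite{bd} 8.1.5. Your broad strategy --- read off the $L_0$-grading from the geometry of the $G(O)$-orbit closure under loop rotation, with the bottom eigenvalue emerging as minus the half-dimension $\check\lambda(\rho)=\tfrac{1}{2}\dim\Gr_G^{\check\lambda}$ of the open orbit --- is in the right spirit, and you correctly flag the normalization as the delicate point. But two of your concrete steps are wrong.

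First, the claimed isomorphism $\Gamma(\Gr_G,\fF_{V^{\check\lambda}})^{G^{(1)}(O)} \cong V^{\check\lambda}$ is false: you are confusing the fiber functor of geometric Satake (de Rham cohomology of $\fF_{V^{\check\lambda}}$, which is indeed $V^{\check\lambda}$) with $\sO$-module global sections followed by invariants. The proof of Proposition~\ref{p:generator} shows that $G^{(1)}(O)$-invariants coincide with $G(O)$-invariants for regular modules, and by Theorem~\ref{t:main} the latter is a nonzero projective $\fz$-module, hence infinite-dimensional over $\bC$. Moreover there is at this stage no $\ld G$-action on $\Gamma(\Gr_G,\fF_{V^{\check\lambda}})$ --- constructing one is precisely the purpose of Section~\ref{s:opers} --- and the residual $G(O)/G^{(1)}(O)=G$-action is by $G$, not $\ld G$, so your ``lowest $\ld G$-weight line'' argument for one-dimensionality has no content. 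Second, the rank-one fallback you propose to sidestep the normalization does not work: a simple-root embedding $SL_2\hookrightarrow G$ gives a closed embedding $\Gr_{SL_2}\hookrightarrow\Gr_G$, but $\fF_{V^{\check\lambda}}$ does not restrict to a spherical IC-sheaf there, and the bottom $L_0$-weight of sections over $\Gr_G$ is not detected by such restrictions. You must carry out the computation directly for $G$, e.g.\ by identifying the bottom $L_0$-line with the generating section of the IC-sheaf over the open orbit and computing its loop-rotation weight from the tangent space at the fixed point $\check\lambda(t)\cdot 1_{\Gr_G}$ together with the IC normalization.
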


This is proved as \cite{bd} 8.1.5.

\subsection{}\label{ss:canonical} Let $\ell^{\check{\la}}\subset \Ga(\Gr_G,\fF_{V^{\check{\lambda}}})$ be the eigenspace described by the lemma. First, we claim that
this line is independent of the choice of uniformizer $t$. It suffices to show that $\ell^{\check{\la}}$ is fixed by the kernel $\Ker(\Aut\lto\bG_m)$ of
the standard character (given by taking the derivative of an automorphism at $0$) because then $\Aut$ acts on such invariants through $\bG_m$ and the operator $L_0$ becomes canonical. 

Because $[L_0,L_i]=-iL_i$, $L_0$ acts on $L_i\cdot \ell^{\check{\la}}$ with eigenvalue $-\check{\la}(\rho)-i$. Thus, for $i>0$ Lemma \ref{l:key} implies that $L_i\cdot\ell^{\check{\la}}=0$.

\subsection{} Next, we prove that $\ell^{\check{\la}}$ lies in the $G(O)$-invariants $\Ga(\Gr_G,\fF_{V^{\check{\la}}})^{G(O)}$ of $\Ga(\Gr_G,\fF_{V^{\check{\la}}})$.

Note that $L_0$ acts on $\fg[[t]]$ with non-positive eigenvalues, acting on $\Ker(\fg[[t]]\to \fg)$ with
strictly negative eigenvalues. Therefore, by the argument above, this kernel acts by $0$ on $\ell^{\check{\la}}$. 

Because $L_0$ acts with the eigenvalue $0$
on $\fg\subset\fg[[t]]$, $\fg$ preserves the eigenspaces of $L_0$. In particular, $\fg$ acts on $\ell^{\check{\la}}$ since this eigenspace is 1-dimensional.
However, $\fg$ has no non-trivial characters because $\fg$ is semisimple. Therefore, $\fg$ acts on $\ell^{\check{\la}}$ trivially. 

Since the embedding of $\fg\into\fg[[t]]$ induces an isomorphism with the quotient $\fg$ of $\fg[[t]]$, this implies that $\fg[[t]]$ acts on $\ell^{\check{\la}}$ trivially.

\subsection{} We recall the Plucker construction, which we will use to define the reduction of $\cP$ to $\ld{B}$. 

The Plucker construction says that to define a reduction of our $\ld{G}$-bundle $\cP$ to $\ld{B}$, it is enough to give the following data. For each 
dominant weight $\check{\lambda}$ of $\ld{G}$,
we are required to specify a line bundle $\sL^{\check{\la}}\subset \cP(V^{\check{\la}})$ with vector bundle quotient. 
Furthermore, we need to specify isomorphisms $\sL^{\check{\la}+\check{\la}'}\isom \sL^{\check{\la}}\otimes_{\fz}\sL^{\check{\la}'}$ which ``satisfy the Plucker relations."
That is, we demand that the following diagram commute:
\[\xymatrix{
\sL^{\check{\la}+\check{\la}'}\ar[d]\ar[rr] && \sL^{\check{\la}}\underset{\fz}{\otimes}\sL^{\check{\la}'}\ar[d] \\
\cP(V^{\check{\la}+{\la'}})\ar[r] & \cP(V^{\check{\la}}\otimes V^{\check{\la}'})\ar[r]^{\alpha_{VW}} & \cP(V^{\check{\la}})\underset{\fz}{\otimes}\cP(V^{\check{\la}'}) 
}\]
\noindent where here we are taking the natural map $V^{\check{\la}+{\la'}}\into V^{\check{\la}}\otimes V^{\check{\la}'}$.

\subsection{} We have constructed ($\bC$-)lines $\ell^{\check{\la}}$ inside of $\cP(V^{\check{\la}})$. Since $\fz$ is a domain, the
$\fz$-module generated by $\ell^{\check{\la}}$ is free by Theorem \ref{t:main}.
Furthermore, comparing the $L_0$-eigenvalues and using Lemma \ref{l:key}, we see that these lines
satisfy the Plucker relations automatically. 

These line bundles are evidently $\Aut$-equivariant. Therefore, to give our $\Aut$-equivariant reduction to $\ld{B}$, it suffices to show
that the quotients of these vector bundles by the line bundles are projective $\fz$-modules.

\subsection{} Obviously the quotient of $\cP(V^{\check{\la}})$ by $\sL^{\check{\la}}$ is finitely presented, so to verify projectivity
it suffices to show that this quotient is flat. 

Our choice of uniformizer defines a splitting $\bG_m\into\Aut$ and therefore defines a grading of $\fz$. Since $\fz$ is in non-positive\footnote{Note that the $\bG_m$ grading is 
opposite to the $L_0$-eigenvalue.} degrees with $\bC$ as the degree $0$ part,
it suffices to show that $\Tor_1(\bC,\cP(V^{\check{\la}})/\sL^{\check{\la}})=0$ where $\bC$ is realized as the degree $0$ quotient $\fz\onto\bC$.

To show this $\Tor$ vanishes, we need to show that $\sL^{\check{\la}}\into \cP(V^{\check{\la}})$ remains an injection after tensoring with $\bC$. The induced map
$\ell^{\check{\la}}\lto\cP(V^{\check{\la}})/\fz_{>0}\cdot\cP(V^{\check{\la}})$ is non-zero because
$\ell^{\check{\la}}$ is the highest graded line in $\cP(V^{\check{\la}})$, proving the desired flatness. 

\subsection{} To construct the map $\vph$, it remains to show that the $\Aut$-equivariant $\ld{H}$-bundle on $\Spec(\fz)$ induced by $\cP_{\ld{B}}$ is given as in
Proposition \ref{p:oper}. For a weight $\check{\la}$ of $\ld{G}$, the line 
bundle induced from our $\ld{H}$-bundle is the trivial line bundle on $\Spec(\fz)$ with $\Aut$-action the $\check{\la}(\rho)$-th power of the standard character because of Lemma \ref{l:key},
as desired. 

By Proposition \ref{p:oper}, this defines the desired map:
\[
\vph:\Spec(\fz)\lto\Op_{\ld{G}}(\cD).
\]

\subsection{} To complete the proof of Theorem \ref{t:ff}, it remains to show that $\vph$ is an isomorphism. Indeed, $\vph$ otherwise satisfies the compatibilities of Theorem \ref{t:ff} by construction.

A standard computation shows that the $L_0$-characters of $\fz$ and $\Fun(\Op_{\ld{G}}(\cD))$ exist and are equal (by \cite{bd} 3.1.13-14, both sides have $\Aut$-equivariant filtrations 
and the associated graded pieces are isomorphic). Therefore, it suffices to see that the induced map:
\[
\vph^*:\Fun(\Op_{\ld{G}}(\cD))\lto \fz
\]
\noindent is injective.

\subsection{} Recall from \cite{fg2} that the Miura transform defines an injective map $\Fun(\Op_{\ld{G}}(\cD))\into\bW^{\fg[[t]]}$, where $\bW$ is the big Wakimoto module from the 
proof of Theorem \ref{t:ds}. The argument from \cite{wakimoto} Section 2 (which is a sort of ``birth of Miura opers" construction relying on 
Theorem \ref{t:main} and computations with the functor $\Psi$) implies that
the following diagram commutes:
\[\xymatrix{
& \Fun(\Op_{\ld{G}}(\cD))\ar[dr]\ar[dl]_{\vph^*} &   \\
\fz\ar@{=}[r] & \bV^{\fg[[t]]}\ar[r] & \bW^{\fg[[t]]}.
}\]
\noindent Since $\Fun(\Op_{\ld{G}}(\cD))\into\bW^{\fg[[t]]}$ is injective, we deduce the desired injectivity of $\vph^*$. This completes the proof of Theorem \ref{t:ff}.

\begin{rems}
\begin{enumerate}

\item There are two more standard compatibilities that the Feigin-Frenkel isomorphism satisfies. One is compatibility with the filtrations alluded to above. This is proved by exactly the same argument
above: both filtrations are compatible with the embeddings into $\bW^{\fg[[t]]}$. The second is compatibility with chiral Poisson structures, which is proved by a somewhat similar argument, as in \cite{wakimoto} 
Section 8.

\item The map $\Fun(\Op_{\ld{G}}(\cD))\into\bW^{\fg[[t]]}$ arises as follows. $\bW$ is the vacuum module of a chiral algebra, and $\bW^{\fg[[t]]}$ is the vacuum module for a \emph{commutative}
chiral algebra. In particular, $\bW^{\fg[[t]]}$ carries a canonical commutative algebra structure. Its spectrum naturally identifies with ``generic Miura opers," the scheme parametrizing opers on $\cD$ with
a second reduction to $\ld{B}$ at $0\in\cD$ such that the two reductions to $\ld{B}$ are transversal (i.e., their relative position is the longest element of the Weyl group).
In particular, this is an affine scheme equipped with a smooth map to $\Op_{\ld{G}}(\cD)$ (the fibers are the open Bruhat cell of $\ld{G}/\ld{B}$) and therefore defines an injective
map on the level of functions.

\end{enumerate}
\end{rems}

\section{Proof of the Finiteness Theorem}\label{s:finiteness}

\subsection{} In this section, we give a proof of the Finiteness Theorem.

We assume the formalism of higher category theory as developed by Lurie in \cite{htt} and \cite{higheralgebra}. Following Lurie, we refer to
$(\infty,1)$-categories merely as $\infty$-categories. When using language from category theory, we always understand it in the ``higher" sense,
so e.g. ``colimit" means ``homotopy colimit."

\subsection{} Let $C^+(\fgh\modules_{reg})^{G(O)}$ be the DG category form of the bounded below derived category of $G(O)$-equivariant, regular Kac-Moody modules, 
considered as a stable $\infty$-category via
the Dold-Kan correspondence. This DG category has a non-degenerate $t$-structure with heart $\fgh\modules_{reg}^{G(O)}$.

\subsection{} Note that the $t$-structure on $C^+(\fgh\modules_{reg})^{G(O)}$ is compatible with filtered colimits. Indeed, the forgetful functor to $C^+(\fgh\modules_{reg})$
is conservative, $t$-exact and commutes with colimits. Since the $t$-structure on $C^+(\fgh\modules_{reg})$ is compatible with filtered colimits by
\cite{higheralgebra} 1.3.4.21, we deduce the desired result.

\subsection{} Let $\sC$ be the (non-stable) $\infty$-category $C^{\geq{0}}(\fgh\modules_{reg})^{G(O)}$ of objects of $C^+(\fgh\modules_{reg})^{G(O)}$ with non-zero cohomologies only in non-negative 
(cohomological) degrees. We have adjoint functors:
\[
\Adjoint{\tau^{\geq{0}}}{C^+(\fgh\modules_{reg})^{G(O)}}{\sC}{i}
\]
\noindent where $i$ is the natural (fully faithful) inclusion functor. 

By the above, $\sC$ admits filtered colimits and $i$ commutes with filtered colimits. Moreover, $\sC$ admits finite colimits: these are computed by forming the corresponding colimit
in the stable $\infty$-category $C^+(\fgh\modules_{reg})^{G(O)}$ and then applying $\tau^{\geq 0}$. Therefore, $\sC$ admits all colimits.

\subsection{} We consider $\bV$ as an object of $C^+(\fgh\modules_{reg})^{G(O)}$ concentrated in cohomological degree $0$. Recall
from \cite{fg2} 7.5.1 that $\bV$ is \emph{almost compact} in $C^+(\fgh\modules_{reg})^{G(O)}$, i.e., the $\Ext$s out of $\bV$ commute with colimits bounded uniformly
from below.\footnote{Actually, what \emph{loc. cit.} shows is the corresponding result for $C^+(\fgh\modules_{reg})$, but the same proof goes through in the equivariant setting.}
In other words, $\bV[-n]$ is compact in $\sC$ for all $n\geq{0}$.

\begin{rem}
The failure of $\bV$ to be \emph{compact} (in the unbounded derived category) instead of merely being almost compact accounts for the technical difficulties
in this section. In particular, we could avoid working with non-stable $\infty$-categories if this were the case.
\end{rem}

\subsection{} We will use the following general lemma:

\begin{lem}\label{l:ind}
Let $\sC$ be a cocomplete\footnote{Note that we really mean presentable.} $\infty$-category and let $S$ be a set of compact objects of $\sC$. Suppose that $S$ detects equivalences in $\sC$ in the sense
that a map $f:X\lto Y$ in $\sC$ is an equivalence if and only if the induced maps $\Hom_{\sC}(s,X)\rar{f_*}\Hom_{\sC}(s,Y)$ are weak equivalences for all
$s\in{S}$. Let $\sD$ be the full subcategory of $\sC$ generated by $S$ under finite colimits, i.e., the smallest full subcategory of $\sC$ admitting finite colimits and containing $S$.
Then the induced functor $\Ind(\sD)\lto\sC$ is an equivalence.
\end{lem}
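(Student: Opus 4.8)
The plan is to show that the tautological functor $F\colon\Ind(\sD)\to\sC$ induced by the inclusion $\sD\hookrightarrow\sC$ is both fully faithful and essentially surjective. First I would record the two structural facts about $\sD$: it is essentially small and closed under finite colimits by construction, and every object of $\sD$ is compact in $\sC$, since this holds for the objects of $S$ by hypothesis and the full subcategory of compact objects is closed under finite colimits (\cite{htt} 5.3.4.14). By the universal property of the Ind-completion (\cite{htt} 5.3.5.10), the inclusion $\sD\hookrightarrow\sC$ therefore extends, uniquely up to a contractible space of choices, to a functor $F\colon\Ind(\sD)\to\sC$ which preserves filtered colimits; since $\sD\hookrightarrow\sC$ preserves the finite colimits that exist in $\sD$, $F$ in fact preserves all small colimits.

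\textbf{Full faithfulness.} This step is essentially \cite{htt} 5.3.5.11, and I would carry it out as follows. Every object of $\Ind(\sD)$ is a filtered colimit of objects of $\sD$, so it is enough to check that for $d\in\sD$ and $X=\colim_i d_i\in\Ind(\sD)$ a filtered colimit with all $d_i\in\sD$, the comparison map $\Hom_{\Ind(\sD)}(d,X)\to\Hom_{\sC}(Fd,FX)$ is an equivalence; but the left side is $\colim_i\Hom_{\sC}(d,d_i)$ by definition of $\Ind$, the right side is $\colim_i\Hom_{\sC}(d,d_i)$ because $d$ is compact in $\sC$ and $F$ preserves the colimit $X=\colim_i d_i$, and the map is the identification of these two descriptions. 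Writing an arbitrary source object as a filtered colimit of objects of $\sD$ and passing to the limit in the first variable then yields full faithfulness of $F$.

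\textbf{Essential surjectivity.} Since $F$ preserves all small colimits and both $\Ind(\sD)$ and $\sC$ are presentable, the adjoint functor theorem (\cite{htt} 5.5.2.9) provides a right adjoint $G\colon\sC\to\Ind(\sD)$. Fix $X\in\sC$ and consider the counit $\epsilon_X\colon FGX\to X$. For each $s\in S$ we have $Fs\simeq s$, and combining the adjunction with the full faithfulness of $F$ gives equivalences $\Hom_{\sC}(s,FGX)\simeq\Hom_{\Ind(\sD)}(s,GX)\simeq\Hom_{\sC}(Fs,X)=\Hom_{\sC}(s,X)$, which one identifies with the map induced by $\epsilon_X$. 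Hence $\epsilon_X$ is carried to an equivalence by $\Hom_{\sC}(s,-)$ for every $s\in S$, so the detection hypothesis forces $\epsilon_X$ to be an equivalence. Therefore $G$ is fully faithful, $F$ is essentially surjective, and, being also fully faithful, $F$ is an equivalence.

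\textbf{The main obstacle.} The only genuinely delicate point — the step I would be most careful about — is verifying that the abstractly produced equivalences (the comparison in the full-faithfulness argument, and the string $\Hom_{\sC}(s,FGX)\simeq\Hom_{\sC}(s,X)$) really agree with the structural maps coming from $F$ and from the counit $\epsilon_X$, rather than being merely some equivalences; this requires keeping track of the relevant coherences, but is standard once one unwinds the universal properties. Everything else is a formal consequence of presentability, the compactness of the objects of $\sD$, and the hypothesis that $S$ detects equivalences.
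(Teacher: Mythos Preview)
Your proof is correct and follows essentially the same route as the paper's: show $F$ is fully faithful using compactness of the objects of $\sD$, produce a right adjoint $G$ via the adjoint functor theorem, and then use the detection hypothesis on $S$ to conclude. The only cosmetic difference is that the paper phrases the last step as ``$G$ is conservative'' (and then invokes the standard fact that a fully faithful left adjoint with conservative right adjoint is an equivalence), whereas you verify directly that the counit is an equivalence; unwinding either statement yields the other.
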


\begin{proof}

The functor $F:\Ind(\sD)\lto\sC$ is fully faithful since any object in $\sD$ is compact in $\sC$. Moreover, $F$ commutes with all colimits since the functor $\sD\lto\sC$ commutes with finite colimits. 
Therefore, $F$ admits a right adjoint $G$. Because $S$ detects equivalences, $G$ is conservative. Therefore, $F$ is an equivalence as desired.

\end{proof}

\subsection{} We apply Lemma \ref{l:ind} with the $\infty$-category $\sC$ given by $C^{\geq{0}}(\fgh\modules_{reg})^{G(O)}$ as above and with the set $S:=\{\bV[-n]\}_{n\geq{0}}$.
We have already noted that $S$ consists of objects compact in $\sC$. By Proposition \ref{p:generator}, the set $S$ detects equivalences in $\sC$. Therefore, with $\sD$ the full
subcategory of $\sC$ generated by shifts of the vacuum module under finite colimits, we deduce the following:

\begin{prop}\label{p:ind}
The natural functor $\Ind(\sD)\lto\sC$ is an equivalence.
\end{prop}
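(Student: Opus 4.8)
The plan is to apply Lemma \ref{l:ind} directly, with the ambient $\infty$-category $\sC := C^{\geq 0}(\fgh\modules_{reg})^{G(O)}$ and the set $S := \{\bV[-n]\}_{n \geq 0}$. There are exactly three hypotheses of the lemma to verify: that $\sC$ is presentable (cocomplete), that each element of $S$ is compact in $\sC$, and that $S$ detects equivalences. The conclusion, that $\Ind(\sD) \to \sC$ is an equivalence with $\sD$ the closure of $S$ under finite colimits, is then immediate.

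First I would note that $\sC$ admits all colimits: this was established in the preceding subsections, since $\sC$ has filtered colimits (inherited $t$-exactly from $C^+(\fgh\modules_{reg})^{G(O)}$, whose $t$-structure is compatible with filtered colimits) and finite colimits (computed in the stable category and truncated by $\tau^{\geq 0}$), hence all colimits. Presentability follows since the whole setup arises from a Grothendieck abelian category. Second, compactness of $\bV[-n]$ in $\sC$ for $n \geq 0$ is precisely the almost compactness of $\bV$ recalled from \cite{fg2} 7.5.1: $\Ext$s out of $\bV$ commute with uniformly bounded-below colimits, and every object of $\sC$ lies in cohomological degrees $\geq 0$, so in $\sC$ all colimits are "bounded below" in the relevant sense and $\bV[-n]$ is genuinely compact there.

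Third, and this is where the real input lies, I would verify that $S$ detects equivalences in $\sC$. A map $f : X \to Y$ in $\sC$ is an equivalence iff $\Cone(f) \simeq 0$, and since $\Hom_{\sC}(\bV[-n], Z) = \Hom(\bV, Z[n])$ computes (a truncation of) $R\Hom_{\fgh\modules_{reg}^{G(O)}}(\bV, Z)$, the family $S$ detects equivalences iff $\bV$ detects acyclicity: an object $Z \in \sC$ with $\Hom(\bV, Z[n]) = 0$ for all $n \geq 0$ must vanish. Running up the Postnikov tower of $Z$, the lowest nonvanishing cohomology $H^m(Z) \in \fgh\modules_{reg}^{G(O)}$ would satisfy $\Hom_{\fgh\modules_{reg}^{G(O)}}(\bV, H^m(Z)) = H^m(Z)^{G(O)} = 0$, contradicting Proposition \ref{p:generator} unless $H^m(Z) = 0$. (One must be slightly careful to extract $\Hom$ into $H^m(Z)$ from the bounded-below tower, but since $Z$ has cohomology only in degrees $\geq 0$ this is the standard truncation argument.) I expect this detection step — specifically, cleanly translating "compact generation on homotopy groups" into the classical statement of Proposition \ref{p:generator} via the $t$-structure — to be the only real content; the presentability and compactness points are bookkeeping already done in the preceding subsections.

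With all three hypotheses in hand, Lemma \ref{l:ind} applies verbatim and yields that $\Ind(\sD) \to \sC$ is an equivalence, which is the assertion of Proposition \ref{p:ind}.
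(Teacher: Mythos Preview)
Your proposal is correct and matches the paper's approach exactly: apply Lemma \ref{l:ind} with $S=\{\bV[-n]\}_{n\geq 0}$, citing the preceding subsections for cocompleteness and compactness, and Proposition \ref{p:generator} for detection of equivalences. One small technical slip: since $\sC$ is not stable, the cone of $f$ need not lie in $\sC$ (it lives in $C^{\geq -1}$), so your reduction to ``$Z\in\sC$ with $\Hom(\bV,Z[n])=0$'' is cleaner if you take $Z=\mathrm{fib}(f)$, which does lie in $C^{\geq 0}$; the Postnikov argument then goes through verbatim.
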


\subsection{} Let $\Rep_{fd}(\ld{G})$ be the tensor category of finite dimensional representations of $\ld{G}$. By \cite{bd} Section 7, the appendix to \cite{fg2}, and geometric Satake, 
there is a monoidal action of $\Rep_{fd}(\ld{G})$ on $C^+(\fgh\modules)^{G(O)}$. Moreover,
for any $V\in\Rep_{fd}(\ld{V})$, the corresponding functor:
\[
V*-:C^+(\fgh\modules_{reg})^{G(O)}\lto C^+(\fgh\modules_{reg})^{G(O)}
\]
\noindent has bounded cohomological amplitude (since it is given by de Rham cohomology on a finite-dimensional stratum of the affine Grassmannian). Note that the functor
$V*-$ admits a left and right adjoint given by convolution $V^{\vee}*-$ with the dual representation.

\begin{prop}\label{p:ac}
For $V$ in $\Rep_{fd}(\ld{G})$, the functor $V*-:C^+(\fgh\modules_{reg})^{G(O)}\lto C^+(\fgh\modules_{reg})^{G(O)}$ preserves almost compact objects. In particular, $V*\bV$ is almost compact in $C^+(\fgh\modules_{reg})^{G(O)}$. 
\end{prop}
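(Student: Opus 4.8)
The plan is to deduce almost compactness of $V*X$ from that of $X$ by a formal adjunction argument, using the two structural facts recorded just before the statement: $V*-$ (and likewise $V^{\vee}*-$) has bounded cohomological amplitude, and $V*-$ admits $V^{\vee}*-$ as both a left and a right adjoint. Throughout I would use the following concrete form of almost compactness, which is equivalent to the definition because the $t$-structure on $C^+(\fgh\modules_{reg})^{G(O)}$ is compatible with filtered colimits (as recalled above): an object $X$ is almost compact if and only if $\RHom_{C^+(\fgh\modules_{reg})^{G(O)}}(X,-)$ carries filtered colimits of diagrams that are \emph{uniformly bounded below} in the $t$-structure to colimits.

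First I would record the key quantitative point: $V^{\vee}*-$ sends uniformly bounded below diagrams to uniformly bounded below diagrams. Concretely, because $V^{\vee}*-$ has bounded cohomological amplitude there is a fixed integer $a\geq 0$, depending only on $V$, such that whenever a filtered diagram $\{Y_i\}$ satisfies $Y_i\in C^{\geq m}(\fgh\modules_{reg})^{G(O)}$ for all $i$, one has $V^{\vee}*Y_i\in C^{\geq m-a}(\fgh\modules_{reg})^{G(O)}$ for all $i$. (This boundedness is exactly where the finite-dimensionality of the support of $\fF_V$ enters, via de Rham cohomology on a finite-dimensional stratum.) Similarly $V*-$ preserves $C^+$, so $V*X$ is again bounded below.

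Now let $X$ be almost compact and let $\{Y_i\}$ be a uniformly bounded below filtered diagram. Since $V^{\vee}*-$ is a left adjoint of $V*-$ it commutes with the colimit, and since it is also a right adjoint of $V*-$ we may rewrite $\RHom(V*X,-)$ as $\RHom(X,V^{\vee}*-)$. Putting these together,
\[
\RHom(V*X,\colim_i Y_i)\simeq\RHom(X,V^{\vee}*\colim_i Y_i)\simeq\RHom(X,\colim_i(V^{\vee}*Y_i)),
\]
and then, applying almost compactness of $X$ to the diagram $\{V^{\vee}*Y_i\}$, which by the previous paragraph is still uniformly bounded below, followed by adjunction once more,
\[
\RHom(X,\colim_i(V^{\vee}*Y_i))\simeq\colim_i\RHom(X,V^{\vee}*Y_i)\simeq\colim_i\RHom(V*X,Y_i).
\]
This shows $V*X$ is almost compact. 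The ``in particular'' is then immediate: $\bV$ is almost compact in $C^+(\fgh\modules_{reg})^{G(O)}$ by \cite{fg2} 7.5.1, hence so is $V*\bV$.

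There is no real obstacle here beyond bookkeeping; the only delicate point is the uniformity of the lower cohomological bound, i.e.\ that $V^{\vee}*-$ shifts the lower bound by an amount independent of the diagram, so that the almost compactness of $X$ genuinely applies to $\{V^{\vee}*Y_i\}$ rather than only to individual terms. This is precisely the content of the ``bounded cohomological amplitude'' input. (One could alternatively phrase the entire argument inside the non-stable $\infty$-category $\sC=C^{\geq{0}}(\fgh\modules_{reg})^{G(O)}$ using Proposition \ref{p:ind}, observing that $V*-$ restricts to a functor on $\sC$ commuting with filtered colimits and sending the compact generators $\bV[-n]$ to almost compact objects; but the direct adjunction argument above is cleaner and suffices.)
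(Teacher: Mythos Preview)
Your proof is correct and follows the same approach as the paper: the paper's one-line argument simply asserts that $V*-$ admits a right adjoint of finite cohomological amplitude commuting with colimits, and that any such functor preserves almost compact objects. You have unpacked precisely this implication, making explicit the two ingredients (adjunction to rewrite $\RHom(V*X,-)$ as $\RHom(X,V^{\vee}*-)$, and bounded amplitude of $V^{\vee}*-$ to preserve uniform lower bounds) that the paper leaves implicit.
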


\begin{proof}

Indeed, $V*-$ admits a right adjoint of finite cohomological amplitude which commutes with colimits. It is immediate to see that any such functor preserves almost compact objects.

\end{proof}

\subsection{} Let $C(\fz\modules)$ denote the DG category form of the derived category of $\fz\modules$ considered as a stable $\infty$-category. Recall that compact objects
of $C(\fz\modules)$ are the same as perfect objects.

\begin{lem}\label{l:trun}
An object of $C(\fz\modules)$ is compact if and only if it is bounded and has finitely presented cohomologies.
\end{lem}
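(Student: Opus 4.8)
The statement is a standard characterization of compact objects in the derived category of modules over a coherent ring, so the plan is to use the fact that $\fz$ is coherent (being an infinite polynomial algebra, as noted after Theorem \ref{t:gr}) together with Lemma \ref{l:res}. I would argue in both directions. For the ``only if'' direction, recall that compact objects of $C(\fz\modules)$ coincide with perfect complexes, i.e.\ bounded complexes of finite rank projective modules; such a complex is visibly bounded, and each of its cohomology groups is finitely presented because $\fz$ is coherent (a bounded complex of finite free modules has finitely presented cohomology over any coherent ring). For the ``if'' direction, suppose $K\in C(\fz\modules)$ is bounded with finitely presented cohomologies. Since polynomial algebras are coherent, each finitely presented cohomology module admits (by Lemma \ref{l:res}, using that $\fz$ is an infinite polynomial algebra) a finite resolution by finite rank free $\fz$-modules, hence is a perfect complex. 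Then, proceeding by induction on the cohomological amplitude of $K$ and using the truncation triangles $\tau^{\leq m}K\to K\to \tau^{\geq m+1}K$, one builds $K$ as a finite iterated extension (cone) of shifts of its cohomology modules; since each such piece is perfect and perfect complexes are closed under shifts and cones, $K$ is perfect, hence compact.

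\textbf{Key steps, in order.}
\begin{enumerate}
\item Recall that $\fz$ is an infinite polynomial algebra over $\bC$ (this was established just after Theorem \ref{t:gr}, via \cite{bd} 2.4), and that such a ring is coherent: finitely presented modules form an abelian category, and in fact by Lemma \ref{l:res} every finitely presented $\fz$-module has a finite resolution by finite rank free modules.
\item Observe that compact objects of $C(\fz\modules)$ are exactly perfect complexes (standard; noted in the excerpt just before the lemma).
\item ($\Rightarrow$) A perfect complex is bounded, and tensoring/forming cohomology, its cohomology modules are subquotients of finitely presented modules in the coherent category, hence finitely presented.
\item ($\Leftarrow$) Given $K$ bounded with finitely presented cohomology: first note each $H^i(K)$, viewed as a complex in degree $i$, is perfect by Lemma \ref{l:res}. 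Then induct on the number of nonzero cohomology groups using the truncation triangle; the cone of a map between perfect complexes is perfect, and shifts of perfect complexes are perfect, so $K$ is perfect.
\end{enumerate}

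\textbf{Expected main obstacle.} There is no deep obstacle here; the only point requiring a little care is the ``dévissage'' in the ($\Leftarrow$) direction — one must genuinely use the truncation triangles in the stable $\infty$-category $C(\fz\modules)$ to realize a bounded complex as a finite iterated cone of (shifts of) its cohomology objects, and then invoke that perfect complexes are a thick (in particular, cone-closed) subcategory. One also needs the coherence of $\fz$ in an essential way for the ($\Rightarrow$) direction, so that ``finitely presented'' is a condition stable under the subquotients arising when passing to cohomology; this is exactly where the infinite-polynomial-algebra structure (via Lemma \ref{l:res}) is used. Everything else is formal manipulation of $t$-structures and perfect complexes.
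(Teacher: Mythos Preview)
Your proposal is correct and follows essentially the same argument as the paper: both directions use that compact equals perfect, coherence of $\fz$ gives the forward implication, and Lemma~\ref{l:res} combined with d\'evissage via truncation triangles gives the reverse. The paper's proof is just a terse one-paragraph version of what you have written out in detail.
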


\begin{proof}

This follows from coherence and (non-Noetherian) regularity (in the sense of Lemma \ref{l:res}) of $\fz$. Indeed, by coherence, perfect objects of $C(\fz\modules)$ have finitely presented cohomologies.
By regularity, any finitely presented $\fz$-module has a bounded resolution by free modules, so by d\'evissage we deduce the opposite inclusion.

\end{proof}

Let $C^{\geq{0}}(\fz\modules)$ be the full subcategory of $C(\fz\modules)$ consisting of objects with vanishing cohomologies in negative degrees. 
We deduce the following:

\begin{cor}\label{c:trun}
An object of $C^{\geq{0}}(\fz\modules)$ is compact in $C^{\geq{0}}(\fz\modules)$ if and only if it is compact in $C(\fz\modules)$.
\end{cor}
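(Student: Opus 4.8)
The plan is to compare compactness in the two stable $\infty$-categories directly through the bounded- amplitude truncation functors. The inclusion $j\colon C^{\geq 0}(\fz\modules)\hookrightarrow C(\fz\modules)$ admits a left adjoint, namely $\tau^{\geq 0}$, and $j$ itself commutes with filtered colimits: the $t$-structure on $C(\fz\modules)$ is compatible with filtered colimits (coproducts and filtered colimits of modules concentrated in non-negative degrees stay in non-negative degrees, since cohomology commutes with filtered colimits), so $C^{\geq 0}(\fz\modules)$ is closed under filtered colimits taken in $C(\fz\modules)$. This is the structural input that makes the comparison possible.

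First I would prove the ``only if'' direction, which is the one requiring an argument. Let $P\in C^{\geq 0}(\fz\modules)$ be compact in $C^{\geq 0}(\fz\modules)$. Given a filtered system $\{X_\alpha\}$ in $C(\fz\modules)$ with colimit $X$, I want $\Hom_{C(\fz\modules)}(P,-)$ to commute with it. The trick is that mapping out of an object concentrated in degrees $\geq 0$ only sees the $\tau^{\geq -N}$-truncations of the target for $N$ large, and more precisely $\Hom_{C(\fz\modules)}(P, Y) \simeq \Hom_{C(\fz\modules)}(P, \tau^{\geq 0}(Y[n])[-n])$ once $P$ is built from finitely many cells — but cleaner: since $P$ is compact in $C^{\geq 0}$, for any fixed $n$ the functor $\Hom_{C^{\geq 0}}(P, \tau^{\geq 0}(-[n]))$ commutes with filtered colimits, and $\Hom_{C(\fz\modules)}(P, Z)=\Hom_{C(\fz\modules)}(P,\tau^{\geq -n}Z)$ for $n$ large depending only on $P$ (a compact object of $C^{\geq 0}$ lies, up to equivalence, in the subcategory of objects of bounded Tor-amplitude, by Lemma \ref{l:trun} once we know the ``if'' direction is an equivalence of characterizations — but to avoid circularity I would instead observe that $P$, being compact in $C^{\geq 0}$, is a retract of a finite colimit of copies of $\fz$, hence has projective amplitude bounded by some $N$, so $\RHom(P,-)$ has cohomological amplitude $\leq N$). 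Then $\Hom_{C(\fz\modules)}(P, X)=\Hom_{C(\fz\modules)}(P,\tau^{\geq -N}X)\simeq \tau^{\geq -N}$-version of the colimit, and since $\tau^{\geq -N}X=\colim \tau^{\geq -N}X_\alpha$ and $P[-N]$ is compact in $C^{\geq 0}$ applied after shifting, we conclude $\Hom_{C(\fz\modules)}(P,X)=\colim \Hom_{C(\fz\modules)}(P,X_\alpha)$. Hence $P$ is compact in $C(\fz\modules)$.

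The ``if'' direction is immediate: if $P\in C^{\geq 0}(\fz\modules)$ is compact in $C(\fz\modules)$, then since filtered colimits in $C^{\geq 0}(\fz\modules)$ agree with those computed in $C(\fz\modules)$ (as noted above), the functor $\Hom_{C^{\geq 0}}(P,-)=\Hom_{C(\fz\modules)}(P,-)|_{C^{\geq 0}}$ commutes with filtered colimits, so $P$ is compact in $C^{\geq 0}(\fz\modules)$. The main obstacle is the ``only if'' direction, and specifically controlling the projective (Tor) amplitude of a compact object of $C^{\geq 0}$ so that maps out of it only depend on a bounded truncation of the target — this is where coherence and non-Noetherian regularity of $\fz$ (Lemma \ref{l:trun}, Lemma \ref{l:res}) enter, guaranteeing that a compact object of $C^{\geq 0}(\fz\modules)$ is perfect and hence of bounded amplitude. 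One should take care that the argument for bounded amplitude does not secretly assume the conclusion; invoking that $P$ is a retract of a finite iterated colimit of copies of $\fz$ in $C^{\geq 0}$ sidesteps this.
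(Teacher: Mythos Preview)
Your ``if'' direction is fine. The ``only if'' direction has a genuine gap at precisely the step you yourself flag. The implication ``$P$ is a retract of a finite colimit of copies of $\fz$ in $C^{\geq 0}$, hence has bounded projective amplitude'' is not automatic, and the finite-colimit description does not sidestep it: finite colimits in $C^{\geq 0}$ are computed as $\tau^{\geq 0}$ of the stable colimit, and truncation need not preserve perfectness. Concretely, over $R=k[\epsilon]/(\epsilon^2)$ the residue field $k$ is the cofiber in $C^{\geq 0}(R\modules)$ of $\epsilon\colon R\to R$ (i.e.\ $\tau^{\geq 0}$ of the two-term Koszul complex), hence a finite colimit of copies of $R$ there, yet $k$ has infinite projective dimension and is not compact in $C(R\modules)$. (There is also a smaller slip: $\fz$ alone does not generate $C^{\geq 0}(\fz\modules)$ under colimits, since iterated finite colimits of $\fz$ in $C^{\geq 0}$ stay in the heart, so e.g.\ the compact object $\fz[-1]$ is not reached; one needs $\{\fz[-n]\}_{n\geq 0}$.) Over $\fz$ one can indeed show $\tau^{\geq 0}$ preserves perfects, but this requires coherence together with Lemma~\ref{l:res}, and once that is established one has already shown $P$ is perfect; your subsequent $\tau^{\geq -N}$ manoeuvre is then redundant. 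So the proposed sidestep is circular in exactly the way you worried about.

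The paper argues differently and avoids this trap: right completeness of the $t$-structure forces a compact $P\in C^{\geq 0}(\fz\modules)$ to be bounded above; the functor $\tau^{\geq n}\colon C^{\geq 0}\to C^{\geq n}$ is left adjoint to an inclusion that commutes with filtered colimits and hence preserves compacts, so the top cohomology $H^n(P)$ is compact in the heart, i.e.\ finitely presented; then d\'evissage on the cohomological amplitude together with Lemma~\ref{l:trun} yields perfectness. No step presupposes bounded projective amplitude of $P$.
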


\begin{proof}

By ``cohomologies" of objects of $C^{\geq{0}}(\fz\modules)$, we understand cohomologies when considered as objects of
$C(\fz\modules)$.

Since the $t$-structure on $C(\fz\modules)$ is right complete by \cite{higheralgebra} 1.3.4.21, any compact object in $C^{\geq{0}}(\fz\modules)$ is cohomologically bounded above. 
Note that top cohomology of any compact object of $C^{\geq{0}}(\fz\modules)$ is a finitely presented $\fz\modules$ since the $t$-structure on $C(\fz\modules)$ is compatible with filtered colimits.
Indeed, this top cohomology is given by applying a truncation functor whose right adjoint commutes with filtered colimits, and therefore it preserves compact objects.
But by d\'evissage, the result now follows immediately from Lemma \ref{l:trun}.

\end{proof}

\subsection{} Let $\Psi^{\geq 0}:\sC\lto C^{\geq{0}}(\fz\modules)$ be the functor $\tau^{\geq{0}}\circ\Psi\circ i$ (where $\tau^{\geq{0}}$ denotes the truncation functor for $C(\fz\modules)$).
Because $\Psi$ is $t$-exact, $\Psi^{\geq 0}$ is also computed as $\Psi\circ i$. Since $\Psi$ commutes with filtered colimits and since the $t$-structures are compatible with filtered
colimits, $\Psi^{\geq{0}}$ also commutes with filtered colimits. Moreover, by the description of how to compute colimits in $\sC$ and $C^{\geq{0}}(\fz\modules)$, we see
that $\Psi^{\geq{0}}$ actually commutes with all colimits.

\subsection{} We have the following proposition:

\begin{prop}\label{p:perfect}
For any $X\in\sC$ compact in $\sC$, $\Psi^{\geq{0}}(X)\in C^{\geq 0}(\fz\modules)$ is compact.
\end{prop}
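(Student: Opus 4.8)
The plan is to reduce the compactness of $\Psi^{\geq 0}(X)$ to the almost-compactness statements already established. By Proposition \ref{p:ind}, the subcategory $\sD$ of compact objects of $\sC$ is generated under finite colimits by the shifts $\bV[-n]$, $n\geq 0$. Since $\Psi^{\geq 0}$ commutes with all colimits (as noted just before the statement) and since the compact objects of $C^{\geq 0}(\fz\modules)$ are closed under finite colimits, it suffices to prove the proposition for $X$ in a set of generators of $\sD$ under finite colimits, hence for $X = \bV[-n]$; and by shifting it is enough to treat $X = \bV$. So the whole question reduces to showing that $\Psi^{\geq 0}(\bV) \in C^{\geq 0}(\fz\modules)$ is compact.

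For this, first note that $\Psi^{\geq 0}(\bV) = \Psi(\bV)$ since $\Psi$ is $t$-exact and $\bV$ is concentrated in degree $0$; and by Theorem \ref{t:ds} we have $\Psi(\bV) \simeq \fz$, concentrated in degree $0$. Now $\fz$, viewed as an object of $C(\fz\modules)$, is visibly compact (it is the free module of rank one, a perfect complex). By Corollary \ref{c:trun}, since $\fz \in C^{\geq 0}(\fz\modules)$ is compact in $C(\fz\modules)$, it is compact in $C^{\geq 0}(\fz\modules)$. This finishes the reduction and hence the proof.

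To organize: First I would observe that $\Psi^{\geq 0}$ preserves finite colimits and that compact objects in $C^{\geq 0}(\fz\modules)$ are closed under finite colimits (the latter by Corollary \ref{c:trun} together with the fact that perfect objects form a stable subcategory). Combined with Proposition \ref{p:ind} --- which identifies the compact objects of $\sC$ as exactly those obtained from $\{\bV[-n]\}_{n\geq 0}$ under finite colimits and retracts --- this means it is enough to check the claim on each generator $\bV[-n]$. Second, I would note $\Psi^{\geq 0}(\bV[-n]) = \Psi(\bV)[-n] \simeq \fz[-n]$ using $t$-exactness of $\Psi$ and Theorem \ref{t:ds}. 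Third, $\fz[-n]$ is a perfect complex of $\fz$-modules, hence compact in $C(\fz\modules)$, hence --- being in $C^{\geq 0}(\fz\modules)$ --- compact there by Corollary \ref{c:trun}.

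The only subtlety, and the step I expect to need the most care, is the passage ``compact generators of $\sC$ under finite colimits $\Rightarrow$ it suffices to check on generators'': one must be slightly careful that $\Psi^{\geq 0}$ sends a finite colimit diagram in $\sC$ to a finite colimit diagram in $C^{\geq 0}(\fz\modules)$ whose vertices are compact, and that the colimit of such a diagram is again compact. The first point is the colimit-preservation of $\Psi^{\geq 0}$ (already recorded); the second is that compact objects in any $\infty$-category are closed under finite colimits and retracts, together with Corollary \ref{c:trun} to know compactness in $C^{\geq 0}(\fz\modules)$ is the same as in $C(\fz\modules)$, where compactness $=$ perfectness is manifestly stable under finite colimits and retracts. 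Everything else is a direct appeal to Theorem \ref{t:ds} and Proposition \ref{p:ind}.
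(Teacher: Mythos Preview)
Your proof is correct and follows essentially the same route as the paper: reduce via Proposition~\ref{p:ind} (together with the fact that compact objects of $\Ind(\sD)$ are the retract-closure of $\sD$) to checking on the generators $\bV[-n]$, then invoke Theorem~\ref{t:ds}. The paper is slightly terser at the end, simply noting that $\Psi^{\geq 0}(\bV[-n])=\fz[-n]$ is compact in $C^{\geq 0}(\fz\modules)$ ``immediately'' from Theorem~\ref{t:ds}, whereas you route this through Corollary~\ref{c:trun}; but this is the easy direction of that corollary (fully faithful inclusion plus filtered colimits computed in $C(\fz\modules)$), so the arguments are the same in substance.
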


\begin{proof}

By \cite{htt} 5.4.2.4 and Proposition \ref{p:ind}, the subcategory of compact objects of $\sC$ is generated by $\{\bV[-n]\}_{n\geq{0}}$ by finite colimits
and retracts. Since $\Psi^{\geq{0}}$ commutes with colimits and since each of these operations preserves compactness, it suffices to see that $\Psi^{\geq{0}}$ maps each $\bV[-n]$ to an object 
compact in $C^{\geq{0}}(\fz\modules)$. But this follows immediately from Theorem \ref{t:ds}.

\end{proof}

\subsection{} Now let us deduce the Finiteness Theorem from the above results. Let $V\in\Rep_{fd}(\ld{G})$ be a finite-dimensional representation of $\ld{G}$.
By Proposition \ref{p:ac} and Proposition \ref{p:perfect}, $\Psi^{\geq{0}}(V*\bV)$ is compact in $C^{\geq{0}}(\fz\modules)$. By Corollary
\ref{c:trun}, we deduce that $\Psi^{\geq{0}}(V*\bV)$ is perfect when considered as an object of $C(\fz\modules)$. However, as noted above,
by $t$-exactness of $\Psi$, we have $\Psi^{\geq{0}}(V*\bV)=\Psi(V*\bV)$.

\end{document}